\def\@cite#1#2{{\rm [}{{\rm#1}\if@tempswa , #2\fi}{\rm ]}} 
\newtheorem{algorithm}{Algorithm}[section]
\newcommand{\eq}[1]{\begin{equation}\label{#1}} 
\newcommand{\en}{\end{equation}}
\def\IR{\mathbb{R}} 
\def\Span{\mbox{\texttt{span}}} 
\def\Range{\mbox{\texttt{range}}} 
\def\calS{\mathcal{S}}  
\def\calU{\mathcal{U}}  
\def\calV{\mathcal{V}}  
\def\calZ{\mathcal{Z}}  
\def\NNZ{\mbox{\texttt{nnz}}}
\title{ 
Fast updating algorithms for latent semantic indexing\thanks{This work was supported by 
NSF, under grant CCF-1318597, and by the Minnesota Supercomputing Institute.}}
\author{ 
Eugene Vecharynski\thanks{Computational Research Division,  
Lawrence Berkeley National Laboratory, Berkeley, CA 94720 
({\tt eugene.vecharynski@gmail.com).}} 
\and  Yousef Saad\thanks{ 
Department of Computer Science and 
Engineering, University of Minnesota, 200 Union Street S.E., 
Minneapolis, MN 55455, USA ({\tt saad@cs.umn.edu}).
} 
}
\begin{document} 
\maketitle 
 
\setcounter{page}{1}

\begin{abstract} 
This paper discusses a few algorithms for updating the approximate 
Singular Value Decomposition (SVD) in the context of information retrieval 
by Latent Semantic Indexing (LSI) methods. A unifying framework is  
considered which is based on Rayleigh-Ritz projection methods. First, 
a  Rayleigh-Ritz approach for the SVD is discussed and it is then  
used to interpret the Zha--Simon algorithms [SIAM J. Scient. Comput. 
vol. 21 (1999), pp. 782-791]. This viewpoint leads to a few alternatives  
whose  goal is to reduce computational cost and storage requirement by projection 
techniques that utilize subspaces of much smaller dimension. 
Numerical experiments 
show that the proposed algorithms yield accuracies comparable 
to
those obtained from standard ones at a much lower  computational cost. 
\end{abstract}

\begin{keywords}
Latent Semantic Indexing, text mining, updating algorithm, singular value decomposition,
Rayleigh Ritz procedure, Ritz singular values, Ritz singular vectors,
min-max characterization, low-rank approximation
\end{keywords} 
 
\begin{AMS} 
15A18, 65F15, 65F30 
\end{AMS} 

%
 
\pagestyle{myheadings} 
\thispagestyle{plain} 
\markboth{EUGENE VECHARYNSKI AND YOUSEF SAAD}{FAST UPDATING ALGORITHMS FOR LSI} 

\section{Introduction}\label{sec:intro} 
Latent Semantic Indexing (LSI), introduced in~\cite{LSI:90}, is a well-established  
text mining technique that aims at finding documents in a   given collection that are relevant to a user's 
query. The method is a variation 
 of the Principal Component Analysis (PCA)~\cite{Bishop-book}, where  
the multidimensional text dataset is  projected to a  
low-dimensional subspace. 
When properly defined, this subspace captures  the essence of 
 the original data.  
In the projected space, \textit{semantically} similar documents tend 
to be close to each other in a certain measure, 
which allows to compare  them according to their \emph{latent semantics} 
rather  than a straightforward word matching.  
 
 
LSI can be  viewed as an extension of  the \textit{vector space} model 
for  Information  Retrieval  (IR)~\cite{Salton.McGill:83}. As such, it begins 
with  a  preprocessing   phase   (see,  e.g,~\cite{Baeza.Ribeiro:99, 
 TMG-tool})  to    summarize the  whole  text 
collection in the  $m$-by-$n$ \textit{term-document} matrix $A$, where 
$m$  and $n$  are the  total  numbers of  terms and  documents in  the 
collection, respectively. 
Thus, each column of $A$ represents a separate document, where nonzero entries are the weights, or essentially the frequencies,  
of the terms occurring in this document.  
For the discussion of the available term weighting schemes we refer the reader to~\cite{Kolda.OLeary:98}.

We consider a  widely  used and standard implementation of LSI that is  
based on the partial Singular Value Decomposition   
(SVD)~\cite{GVL-book} of the term-document matrix.  
In this case, LSI resorts  
to calculating the singular triplets $(\sigma_j, u_j, v_j)$ 
associated with the $k$ largest singular values $\sigma_1 \geq \sigma_2 \geq \ldots \geq \sigma_k \geq 0$ of $A$. 
Throughout, we call these $k$ triplets the \textit{dominant} singular triplets. 
The \textit{left} singular vectors $u_j$ are then used to construct the low-dimensional subspace for data 
projection and, along with $\sigma_j$ and the \textit{right} singular vectors $v_j$, to evaluate the relevance scores.   
We note that a number of ``SVD avoiding'' LSI algorithms have been proposed in recent years,  
e.g.,~\cite{Blom.Ruhe:05, ChenSaad:FiltLan, Dhillon.Modha:01, KokSaad-IR, Kolda.OLeary:98}, for example, by replacing the SVD by the Lanczos decomposition. 
We will briefly discuss one such alternative based on using Lanczos vectors. 

Given a user's query $q$, formalized by a vector of size $m$, i.e., regarded  
as a document, the associated vector $r$ of $n$ relevance scores is evaluated by 
\begin{equation}\label{eqn:scores} 
r = \mbox{diag}\left( \gamma_1, \ldots, \gamma_n \right) \left(V_k \Sigma_k^{1-\alpha} \right) \Sigma_k^{\alpha} U_k^T  q.  
\end{equation} 
Here, $\Sigma_k = \mbox{diag}\left\{ \sigma_1, \sigma_2, \ldots, \sigma_k \right\} \in \mathbb{R}^{k \times k}$; 
$U_k = [u_1, \ldots, u_k] \in \mathbb{R}^{m \times k}$ and $V_k = [v_1, \ldots, v_k] \in \mathbb{R}^{n \times k}$ 
are the matrices of the orthonormal left and right singular vectors, respectively. The diagonal elements $\gamma_j$ 
are chosen to normalize the rows of $V_k \Sigma_k^{1-\alpha}$ so that each row has a unit norm.  
The scalar $\alpha$ is a splitting parameter and has no affect on ranking if the  
normalization is disabled ($\gamma_j = 1$); see, e.g.,~\cite{Kolda.OLeary:98} for a detailed discussion. 
  
The $j$-th entry of $r$, denoted by $r(j)$, quantifies the relevance between the $j$-th document 
and the query.  
The documents with the highest relevance scores are returned to the user in response to the query $q$.  
Note that, for example, in the case where $\alpha = 0$, $r(j)$ is the (scaled) cosine of the angle  
between the $j$-th projected document $U_k^T a_j$ and the projected query $U_k^T q$.  
 
In practical applications, where the amount of data tends to be extremely large, 
the implementation of LSI faces two major difficulties. 
The first difficulty is  
the requirement to compute the dominant singular triplets of a very large 
matrix, a problem that has been relatively well investigated.  
Possible solutions include 
invoking iterative singular value solvers,  
e.g.,~\cite{SVDPACK,Hernandez.Roman.Tomas:08},  
that take advantage of  sparsity and fast matrix-vector products.  
Other solutions leverage specific spectral properties  
of the term-document matrices, and rely on incremental or  
divide-and-conquer techniques;  
e.g.,~\cite{Zha.Simon:99, ChenSaad:Mlev-08}. 
 
The second computational difficulty of LSI is related to the fact that 
document collections are dynamic, i.e., the term-document matrices are subject to repeated updates. 
Such  updates result from  adding, e.g.,  new 
documents or terms  to the collection.  In the  language of the vector 
space model, this  translates into adding new columns  or rows to $A$. 
Another type of  update is when the term  weights are corrected, which 
corresponds to  modifying entries of the  term-document matrix.  Thus, 
in order to  maintain the quality of the  subsequent query processing, 
the available  singular triplets should be  accordingly modified after 
each  update.   A  straightforward  solution  to this  problem  is  to 
recompute  the partial SVD  of the  updated term-document  matrix from 
scratch.   However, even  with the  most sophisticated  singular value 
solvers, this  naive approach  is not practical  as it  is exceedingly 
costly  for  realistic  large-scale  text collections.   Therefore,  a 
critical question is how  to \textit{update} the available $\Sigma_k$, 
$U_k$, and  $V_k$ without fully recomputing the  high-cost partial SVD 
of the modified matrix, so that the retrieval quality is not affected. 
 
This  paper  addresses  this  specific question.   It starts by  
revisiting   the   well-known    updating   algorithms   of   Zha   and 
Simon~\cite{Zha.Simon:99},  currently the 
state-of-the-art approach for the LSI updating problem. 
Specifically, the paper interprets these schemes as Rayleigh-Ritz
projection procedures. A by-product of this viewpoint is a
min-max type characterization of  the Ritz singular values obtained in the
process. On the practical side, this projection viewpoint 
unravels  a certain redundancy in the computations, showing
that it is possible to further improve the efficiency of the techniques  
without sacrificing retrieval quality.  
Based on these findings, we propose a family of new updating algorithms which can be substantially faster 
and less storage-demanding than the methods in~\cite{Zha.Simon:99}.  

The motivation  for the present  work comes from the  observation that 
the  methods  in~\cite{Zha.Simon:99}   (reviewed  in  more  detail  in 
Section~\ref{sec:lsi_proj})  rely on the  SVD of  a $(k+p)$-by-$(k+p)$ 
dense matrix  and orthogonalization  of $p$ ($m$-  or $n$-dimensional) 
vectors, where $p$ denotes the size of the update, i.e., 
the number of added columns, rows, or corrected terms.  
In particular, this suggests that the computational complexity of the overall updating  
procedure scales cubically with respect to $p$. 
 
While the effect of the cubic scaling is marginal for smaller document collections,  
where the update sizes are typically given by only a few terms or documents, 
the situation becomes different for large-scale datasets.             
In this  case, even if $p$ is  a tiny fraction of  terms or documents, 
its  (cubed)  value may  be  large  enough  to noticeably  affect  the 
efficiency  of  the  updating   methods.   In  other  words,  for  $p$ 
sufficiently large, the computational costs associated with the SVD of 
a $(k+p)$-by-$(k+p)$  matrix and orthogonalization  (QR decomposition) 
of  $p$ vectors  become non-negligible  and may  dominate  the overall 
updating procedure. 
 
Another context in which  larger updates are to be processed 
can be found in the recent  
works~\cite{Tougas.Spiteri:08, Mason.Spiteri:07}, 
where the authors suggest to postpone  invoking 
the updating schemes from~\cite{Zha.Simon:99}  
until the update size becomes sufficiently large. In between the updates,  
a fast \textit{folding-in} procedure~\cite{BeDuBri:95, Berry.Drmac.Jessup:99} is performed,
which can be viewed as a form of PCA out-of-sample embedding~\cite{Maaten.Postma.Herik:09} applied 
in the context of LSI.
Such a combination of folding-in and updates,  
called \textit{folding-up}, has been shown to yield 
a substantial reduction in the time spent for 
updating without a significant loss in the retrieval accuracy.

To adapt their algorithms to the cases of large $p$, the authors  
of~\cite{Zha.Simon:99} suggest splitting the current update into 
a series of smaller sequential updates, and performing 
the whole updating procedure in an incremental fashion.  
While this approach indeed leads to memory savings, 
it requires more time to complete the \textit{overall} updating task  
than to perform the whole update at once.  
This can be seen, e.g., from Table~$4.2$  
in the original paper~\cite{Zha.Simon:99}, after multiplying the reported average  
CPU times per update by the number of updates.  
A similar observation has been made in~\cite[Table 1]{Tougas.Spiteri:08}.  
Additionally, as has also been observed in~\cite{Tougas.Spiteri:08},  
breaking a given update into a sequence of smaller updates   
can potentially lead to a faster deterioration of the retrieval accuracy.

The updating algorithms proposed in this paper require orthonormalizing sets of significantly fewer vectors  
than those in~\cite{Zha.Simon:99} and rely on the SVD of much smaller matrices. 
As a result, the new schemes are less sensitive to the increase in the update sizes.  
As shown in our experiments, the presented algorithms significantly reduce the runtime, whereas  
the retrieval accuracy is not affected. 
%
%
 
Finally, let  us recall  that the methods  in~\cite{Zha.Simon:99} were 
introduced as a solution to the problem of the deteriorating retrieval 
accuracy   exhibited by   existing   methods~\cite{BeDuBri:95, OBrien:94}  
in the mid-1990s. 
This solution  essentially traded the  
SVD of a $k$-by-$k$ matrix in~\cite{BeDuBri:95, OBrien:94} for  
the above mentioned orthonormalization of a set of 
$p$ extra vectors plus a $(k+p)$-by-$(k+p)$ SVD. 
The updating schemes introduced in this work can be viewed as a compromise between~\cite{BeDuBri:95, OBrien:94}  
and \cite{Zha.Simon:99}, where the runtime resembles that of the former  
while the retrieval accuracy 
is comparable to the latter.

The rest of the  paper is organized as follows. Projection methods for the SVD  
are reviewed in Section~\ref{sec:proj} followed by  a discussion of  
their applications to LSI in Section~\ref{sec:lsi_proj}. In  
Section~\ref{sec:new_updates} a number of alternative algorithms are 
presented with a goal of reducing cost.  
Section~\ref{sec:num} presents numerical experiments to test the various 
methods introduced, and  
Section~\ref{sec:concl} concludes the paper with a few remarks. 
 
\section{Projection methods for singular value problems}\label{sec:proj} 
It will be useful to explore projection methods for the singular value problem 
in order to understand the mechanisms at play when updating the SVD. 
We begin with a little background on standard projection methods. Recall that  
given a Hermitian $n \times n$ matrix $M$, a Rayleigh-Ritz (RR)  projection method  
extracts approximate eigenpairs for $A$ from a \textit{search subspace} $\Span \{ Z \}$, 
where $Z \ \in \IR^{n \times s}$. It does so by imposing two 
conditions. First, any approximate eigenvector must belong to $\Span \{ Z \}$, 
i.e., it can be written as $z = Z c$ (where 
$c \ \in \IR^{s}$). Second, the approximate eigenpair $(\theta, z)$ must 
satisfy the Galerkin condition that  
the residual is orthogonal to $\Span \{ Z \}$, i.e., we must have 
$(M - \theta I) z  \perp \Span \{ Z \}$. This yields the projected 
$s \times s$ eigenvalue problem $(Z^T M Z) c = \theta c$ from which  
we obtain the Ritz values $\theta_i$, the corresponding eigenvectors $c_i$,  
and the Ritz vectors 
$z_i = Z c_i, i=1,\ldots, s$.  Details 
can be found in, e.g., \cite{Parlett-book,Saad-book3}.  
 
\subsection{Application to the SVD}\label{sec:RRSVD} 
Consider now the singular value problem for a matrix $A \in \ \IR^{m\times n}$.  
The above projection procedure  can be adapted to the singular value problem in a number of 
ways. For example, we can apply the RR idea to one of the 
two standard eigenvalue problems with either  $A^T A$ or  $A A^T$. 
This, however, is not appealing due to its ``non-symmetric'' nature: it puts an emphasis 
on one set of singular vectors (left or right) and will encounter difficulties with the  
smallest singular values due to their squaring. An alternative  
approach is to apply the Rayleigh-Ritz procedure to the  augmented matrix 
\eq{eq:augment}  
B = \begin{pmatrix}  
 0 & A \\ A^T & 0   
\end{pmatrix}  .  
\en 
We will consider this second approach as it leads more naturally to a separation of the  
right and left singular vectors. A straightforward application of the RR procedure would 
now use  a subspace $\Span \{ Z \}$ where  
$Z \in \ \IR^{(m+n) \times s}$ and would write a test eigenvector in the form  
$z = Z c$ where $c \ \in \IR^{s}$. It then imposes the Galerkin condition  
$Z^T(B - \theta I) Z c = 0$ from which Ritz values and vectors are  
obtained. 
Observe that $z$ can be written as $z=\left(u \atop v\right)$,  
where $u \ \in \IR^m$ approximates a left singular vector of $A$ and  
$v \ \in \IR^n$ approximates a right singular vector of $A$. 
 
One weakness of 
this viewpoint is that there is no reason why 
the left approximate singular vector (vector $u$, i.e, top part of $z=Zc$) and  
the right approximate singular vector (vector $v$ or bottom part of $z=Zc$) should be expressed in 
the basis $Z$ with the same  basis coefficients $c$. In practice, we have 
two  bases available, one for the left singular vectors  
and one for the right singular vectors. Therefore, let   
$U \ \in \IR^{m \times s_1}$ be a basis for the \textit{left search subspace} and 
$V \ \in \IR^{n \times s_2}$ a basis for the \textit{right search subspace}, 
with $s_1+s_2=s$. 
Both $U$ and $V$ are assumed to be orthonormal bases and note that  
$s_1$ and $s_2$ need not be the same. 
Then, the approximate right singular vector can be expressed as $u = U f $ (with $f\ \in \IR^{s_1}$)  
and the approximate left singular vector as $v = V g $ (with $g\ \in \IR^{s_2}$).  
This gives us $s_1+s_2=s$ degrees of freedom. To extract $f$ and $g$ we would need 
$s $ constraints which are to be imposed on the residual vector $(B - \theta I) z$  
where $z=\left(u \atop v\right)$. This residual is  
\eq{eq:res} r = \begin{pmatrix}  
 A v - \theta u \\ A^T u - \theta v  
\end{pmatrix} . \en 
It is natural to impose the condition that the first part,  
which is in $\IR^m$, be orthogonal to $\Span \{ U \} $  
and the second  part, which is in $\IR^n$, be orthogonal to $\Span \{ V \} $ : 
\eq{eq:Gal2} 
\left\{ \begin{array}{ccc}  
 U^T (A V g - \theta U f )  & = & 0  \\  
 V^T (A^T U f - \theta V g )   & = & 0  
\end{array} \right. .  
\en 
System~(\ref{eq:Gal2}) leads to the projected singular value  
problem $H g = \theta f$ and $H^T f = \theta g$, where $H = U^T A V$. 
Let $(\theta_i, f_i, g_i)$ be the singular triplets of the projected matrix~$H$,  
i.e., $H g_i = \theta_i f_i$ and $H^T f_i = \theta_i g_i$; $i=1,\ldots, \min\{s_1,s_2\}$. 
Then the scalars $\theta_i$ are the \textit{Ritz singular values}, the vectors $U f_i$ are the  
\textit{left Ritz singular vectors} of $A$, and the vectors $V g_i$ are the  
\textit{right  Ritz singular vectors} of $A$. 
It is important to note that when $\theta_i >0$ then 
 the above conditions imply that $ \|f_i \| = \| g_i \|$. 
This can be easily seen by multiplying (from left) both sides of the equality  
$H g_i = \theta_i f_i$ by $f_i^T$ and of the equality $H^T f_i = \theta_i g_i$ by $g_i^T$. 

The  ``doubled'' form  of the  Galerkin  condition~(\ref{eq:Gal2}) has 
been  considered  in  \cite{Hochstenbach01}  in  the  context  of  the 
correction  equation for  a  Jacobi-Davidson approach.  These are  not 
quite standard  Galerkin conditions since they amount  to two separate 
orthogonality  constraints.   
However,  it  is also possible  to interpret this approach  
as  a  standard 
Galerkin/RR procedure, which is the viewpoint we  
develop next.  
 
Consider the following 
new basis  for a subspace of  $\IR^{m+n}$ given by   
\eq{eq:Zbasis} Z = 
\left[ 
\begin{matrix}  
U & 0  \\ 
0 & V \end{matrix} \right] . \en 
Then a test vector $z$ in $\Span \{ Z \}$ can be written as  
\eq{eq:vecz} 
z = \left[ \begin{matrix}  
U & 0  \\ 
0 & V \end{matrix} \right]  \begin{pmatrix} f \\ g \end{pmatrix} =  \begin{pmatrix} u \\ v \end{pmatrix}  .  
\en  
Clearly, the residual vector $(B - \theta I)z$ of this test vector for  
the approximate eigenvalue $\theta $ and the matrix $B$ is again given by \eqref{eq:res} and the standard Galerkin condition 
$Z^T r = 0$ yields exactly the doubled form \eqref{eq:Gal2} of the Galerkin condition. 
\begin{proposition} 
The RR procedure defined by  
the doubled form of the Galerkin condition \eqref{eq:Gal2} is mathematically equivalent to the 
standard RR procedure applied to the symmetric matrix $B$ using the basis  
given by \eqref{eq:Zbasis}.  
\end{proposition}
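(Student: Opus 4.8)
The plan is to verify directly that the projected eigenvalue problem produced by the standard RR procedure applied to $B$ with the basis $Z$ of \eqref{eq:Zbasis} coincides with the projected singular value problem $Hg=\theta f$, $H^Tf=\theta g$, and that the associated Ritz vectors agree blockwise. First I would observe that, since $U\in\IR^{m\times s_1}$ and $V\in\IR^{n\times s_2}$ are orthonormal and are placed in complementary block rows of $Z$, the matrix $Z$ has orthonormal columns, i.e. $Z^TZ=I_s$ with $s=s_1+s_2$. Hence the standard RR procedure applied to $B$ extracts approximate eigenpairs $(\theta,z)$ with $z=Zc$ from the \emph{standard} (not generalized) eigenproblem $(Z^TBZ)c=\theta c$.

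Next I would compute $Z^TBZ$ from the block forms \eqref{eq:augment} and \eqref{eq:Zbasis}:
\[
Z^TBZ=\begin{pmatrix} 0 & U^TAV \\ V^TA^TU & 0\end{pmatrix}=\begin{pmatrix} 0 & H \\ H^T & 0\end{pmatrix},\qquad H=U^TAV .
\]
Then I would split the coordinate vector $c$ conformally with the block structure as $c=\binom{f}{g}$ with $f\in\IR^{s_1}$, $g\in\IR^{s_2}$. With this partition the eigenproblem $(Z^TBZ)c=\theta c$ reads exactly $Hg=\theta f$ and $H^Tf=\theta g$, which is the projected singular value problem of Section~\ref{sec:RRSVD}; equivalently, one may simply reuse the observation already recorded just before the proposition, namely that for $z=Zc=\binom{u}{v}$ the residual $(B-\theta I)z$ equals \eqref{eq:res} and that the standard Galerkin condition $Z^Tr=0$ is precisely the doubled form \eqref{eq:Gal2}. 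Finally, for each projected singular triplet $(\theta_i,f_i,g_i)$ of $H$ the corresponding Ritz vector of the RR procedure on $B$ is $z_i=Zc_i=\binom{Uf_i}{Vg_i}$, whose top block $Uf_i$ is the left Ritz singular vector and whose bottom block $Vg_i$ is the right Ritz singular vector; thus the two procedures return the same Ritz singular values and the same left/right Ritz singular vectors.

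The only point requiring a little care — the closest thing to an obstacle — is the bookkeeping of the spectrum of the block matrix $\begin{pmatrix}0&H\\H^T&0\end{pmatrix}$: its nonzero eigenvalues come in $\pm$ pairs $\pm\theta_i$, and when $s_1\neq s_2$ it additionally has $|s_1-s_2|$ zero eigenvalues with no counterpart among the $\min\{s_1,s_2\}$ singular triplets of $H$. Consequently ``mathematically equivalent'' should be read as: the set of Ritz singular triplets produced by \eqref{eq:Gal2} is exactly the set obtained from the standard RR procedure on $B$ by retaining the nonnegative Ritz values $\theta_i$ and reading off the top and bottom blocks of the corresponding Ritz vectors (which, by the remark in Section~\ref{sec:RRSVD}, have equal norm whenever $\theta_i>0$, so they normalize consistently). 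With that understanding the equivalence follows at once from the two computations above, and no further estimates are needed.
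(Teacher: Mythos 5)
Your proof is correct and takes essentially the same route as the paper, which establishes the proposition in the paragraph immediately preceding it by noting that for a test vector $z=Zc$ the residual is \eqref{eq:res} and the standard Galerkin condition $Z^Tr=0$ reproduces the doubled form \eqref{eq:Gal2}; your explicit block computation of $Z^TBZ=\bigl(\begin{smallmatrix}0&H\\H^T&0\end{smallmatrix}\bigr)$ is the same observation phrased via the projected eigenproblem rather than via the residual. Your careful remark about the $\pm\theta_i$ pairing and the $|s_1-s_2|$ extra zero eigenvalues is a welcome clarification of what ``mathematically equivalent'' means here, a point the paper leaves implicit.
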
  
 
In essence the procedure defined in this way puts an emphasis on \textit{not mixing} the  
$U$-space and the $V$-space as 
indicated by zeros in the appropriate locations in \eqref{eq:Zbasis} and this is  
achieved by a restricting the  choice of basis for the search space. 
This is in contrast to the 
general procedure described at the very beginning of this section, where $Z$ makes  
no distinction between the $U$- and $V$- spaces.  
 
\subsection{Application to LSI}  
Let $A$ be a certain  term-document matrix considered at some stage  
in the updating and querying process and let 
$U \in \mathbb{R}^{m \times s_1}$ and $V \in \mathbb{R}^{n \times s_2}$  
be the orthonormal bases of the left and right search subspaces, respectively. 
Suppose that our goal is to construct approximations $(\tilde \sigma_i, \tilde u_i, \tilde v_i)$  
to the $k$ dominant singular triplets $(\sigma_i, u_i, v_i)$, such that 
$\tilde \sigma_i   = \theta_i \geq 0$, $\tilde u_i = U f_i$, and  
$\tilde v_i = V g_i$; $i = 1, \ldots k$.  
The matrices of the singular values and vectors of $A$ are then approximated by  
\begin{equation}\label{eqn:approx_form} 
\tilde \Sigma_k = \Theta_k, \quad \tilde U_k =  U F_k, \quad  \tilde V_k =  V G_k \; ; 
\end{equation} 
where $\Theta_k = \mbox{diag} \left\{ \theta_1, \ldots , \theta_k \right\}$, and 
$F_k = [f_1, \ldots, f_k] \in \mathbb{R}^{s_1 \times k}$ 
and $G_k = [g_1, \ldots, g_k] \in \mathbb{R}^{s_2 \times k}$ are the ``coefficient matrices''  
with orthonormal columns. 
The resulting $\tilde \Sigma_k$, $\tilde U_k = [\tilde u_1, \ldots, \tilde u_k]$,  
and $\tilde V_k = [\tilde v_1, \dots, \tilde v_k]$ can be used  
to evaluate the relevance scores in~(\ref{eqn:scores}) 
instead of the exact $\Sigma_k$, $U_k$, and $V_k$.   
 
A solution to this problem can be obtained by simultaneously imposing the  
\textit{Galerkin} conditions seen in Section~\ref{sec:RRSVD} to the $k$ residuals, which  
along with the assumption on $(\tilde \sigma_i, \tilde u_i, \tilde v_i)$ 
lead to the equations   
\begin{equation}\label{eqn:proj_prob} 
\left\{ 
\begin{array}{lcl} 
(U^T A  V) g_i & = & \theta_i f_i  \\  
(U^T A  V )^T f_i & = & \theta_i g_i   
\end{array}  
\right., \qquad i = 1,\ldots,k \; . 
\end{equation} 
The unknown triplets $ (\theta_i, f_i, g_i)$ are  
determined by the SVD of the projected matrix  
$H =  U^T A  V \in \mathbb{R}^{s_1 \times s_2}$.  
The approximations to the $k$ dominant singular triplets of $A$ are then  
defined  
by~(\ref{eqn:approx_form})  
where $\Theta_k$, $F_k$, and $G_k$ correspond to   
the $k$ dominant singular triplets of $H$.  
The  diagonal entries of $\Theta_k$ are the Ritz singular values and  
the columns of $\tilde U_k$ and $\tilde V_k$ in~(\ref{eqn:approx_form}) are  
the left and right  Ritz singular vectors, respectively.

We will refer to the above approximation scheme as  
the \textit{singular value Rayleigh-Ritz} procedure  
for the matrix $A$ with respect to $U$ and $V$ or, shortly, SV-RR($A$, $U$, $V$).  
The overall scheme is summarized in Algorithm~\ref{alg:sv-rr}. 
 
\begin{algorithm}[SV-RR ($A$, $U$, $V$)]\label{alg:sv-rr} 
\emph{Input:} $A$, $U$, $V$. \emph{Output:} $\tilde \Sigma_k$, $\tilde U_k$, $\tilde V_k$. 
\begin{enumerate}  
\item \emph{Construct} $H = U^T A  V$.  
\item \emph{Compute the SVD of} $H$. \emph{Form matrices}  
$\Theta_k$,  
$F_k$, \emph{and} $G_k$ \emph{that correspond to the $k$ dominant singular triplets of} $H$.   
\item \emph{Return} $\tilde \Sigma_k$, $\tilde U_k$, \emph{and} $\tilde V_k$, 
\emph{given by} (\ref{eqn:approx_form}).  
 
\end{enumerate} 
\end{algorithm} 
\vspace{0.1in} 

\subsection{Optimality} 
Since the process just described is a standard RR 
procedure applied to the matrix \eqref{eq:augment}, well-established optimality  
results for the RR procedure apply. Here, we show a few consequences 
of this viewpoint, some of which coincide with results that can be 
found elsewhere, see, e.g., \cite{Hochstenbach01}, using a different approach.  
The presented Min-Max results, however, are new to the best of our knowledge.
 
Let us first examine the Rayleigh Quotient (RQ) associated 
with the augmented matrix $B$ for a vector 
$z = \left(u \atop v\right)$. We find that  
\eq{eq:RQB} 
\rho (z) \equiv \frac{ (B z,z) }{(z,z)}  =\frac{2 u^T A v }{ \| u\|^2 + \| v\|_2^2}.  
\en
As a next step we study properties of this function at points that yield
its maximum values in subspaces of $\Span(Z)$, where $Z$ is defined in~\eqref{eq:Zbasis}.
The results will be used shortly to establish an optimality result 
for the SV-RR procedure.

We start with by observing that any subspace $\calS$ of $\Span(Z)$ has a special structure:
\eq{eq:SStruct}
\calS = \left\{ \left( u \atop v \right) \in \IR^{m+n} : u \in \calS_u \ \text{and} \ v \in \calS_v \right\},
\quad \dim(\calS) = \dim(\calS_u) + \dim(\calS_v),
\en
where $\calS_u$ and $\calS_v$ are some subspaces of $\mathcal{U} = \Span(U)$ and 
$\mathcal{V} = \Span(V)$, respectively.
In particular, representation~\eqref{eq:SStruct} suggests that if 
$z = \left( u \atop v \right)$ is in $\calS$ then, for any scalars $\alpha$ and $\beta$,  
the vector $\bar z = \left( \alpha u \atop \beta v \right)$ is also in $\calS$.
The following lemma is a simple consequence of this property.

\begin{lemma}\label{lem:OptProp1}  
The maximum of the RQ in~\eqref{eq:RQB} over a subspace $\calS \subseteq \text{\emph{span}}(Z)$ is 
nonnegative.
\end{lemma}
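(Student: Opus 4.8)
The plan is to exploit the structural property of subspaces $\calS \subseteq \Span(Z)$ recorded just before the lemma, namely \eqref{eq:SStruct}, which says that if $z = \left( u \atop v \right) \in \calS$ then $\bar z = \left( \alpha u \atop \beta v \right) \in \calS$ for all scalars $\alpha, \beta$. The maximum of $\rho$ over $\calS$ is attained (the unit sphere of $\calS$ is compact and $\rho$ is continuous away from the origin, or one simply notes $\rho$ is a Ritz value of a finite-dimensional symmetric problem), so let $z = \left( u \atop v \right)$ be a maximizer with $\rho(z) = \mu$. From \eqref{eq:RQB}, $\mu = 2 u^T A v / (\|u\|^2 + \|v\|^2)$.

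First I would dispose of trivial cases: if $u^T A v = 0$ then $\mu = 0 \ge 0$ and we are done. Otherwise, by possibly replacing $z$ with $\left( u \atop -v \right)$ (which lies in $\calS$ by \eqref{eq:SStruct} and does not change $\|u\|^2 + \|v\|^2$), we may assume $u^T A v > 0$, whence $\mu > 0 \ge 0$. That already gives the claim, but the cleanest single argument is: the vector $\bar z = \left( u \atop -v \right) \in \calS$ satisfies $\rho(\bar z) = -2u^T A v/(\|u\|^2+\|v\|^2) = -\rho(z) = -\mu$; since $\mu$ is the \emph{maximum} of $\rho$ over $\calS$ we must have $-\mu \le \mu$, i.e. $\mu \ge 0$.

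I would write it in the second form, since it is a one-line deduction that needs no case split: the key step is simply observing that $\calS$ is closed under the sign flip $v \mapsto -v$ (a direct consequence of \eqref{eq:SStruct}), that this flip sends $\rho(z)$ to $-\rho(z)$ (immediate from the formula \eqref{eq:RQB}, where the numerator is bilinear and the denominator is sign-insensitive), and that therefore the maximum, being $\ge$ both a value and its negative, is $\ge 0$. I would also remark that one must ensure the maximizer is a genuine nonzero vector so that $\rho$ is defined there — this is automatic since $\calS$ has positive dimension (being a subspace over which we take a maximum) or can be stated as part of the hypothesis; alternatively one invokes that the maximum of $\rho$ over $\calS$ equals the largest Ritz value of $B$ restricted to $\calS$, which is well defined.

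There is no real obstacle here; the only thing to be careful about is the edge case $\dim(\calS) = 0$ or the degenerate situation where one of $\calS_u$, $\calS_v$ is trivial — but in every case the flip argument still applies verbatim (if $\calS = \{0\}$ the statement is vacuous or one adopts the convention that the maximum over the empty sphere is $-\infty < $ anything, which one would simply exclude), so the proof is essentially the two sentences above. I expect the write-up to be three or four lines.
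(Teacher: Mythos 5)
Your argument is the same sign-flip argument the paper uses (the paper negates $u_*$, you negate $v$, which is of course equivalent), merely phrased directly ($-\mu \le \mu$) rather than by contradiction; the extra remarks about existence of the maximizer and degenerate $\calS$ are harmless housekeeping that the paper leaves implicit. Correct and essentially identical to the paper's proof.
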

\begin{proof} 
Let $z^* = (u_*^T \ v_*^T)^T$ be a maximizer of~\eqref{eq:RQB} in $\Span(Z)$, 
and assume that the maximum is negative, i.e., $\rho(z_*) < 0$. 
The vector $\bar z = \left( -u_*^T \  v_*^T \right)^T$
is also in $\calS$, and we have $\rho( \bar z ) = - \rho(z_*) > 0 > \rho(z_*)$,
contradicting our assumption that $\rho(z_*)$ is the maximum.
Therefore,  $\rho(z_*) \ge 0$. 
\end{proof}

As seen earlier, if $\theta_i >0$ then the conditions  
$H g = \theta f $ and $H^T f = \theta g$, arising in the projection procedure, imply that  
$\| g \| = \| f \| $, so that the approximate singular vectors 
satisfy $\|u\| = \|v\|$.  
Viewed from a different angle, we can now ask the question:
Does the 
maximizer of the RQ \eqref{eq:RQB} over $z \ \in \ \Span\{Z\}$, 
satisfy the property that $\| u \| = \|v\|$? 
The answer to the question is yes as the following lemma shows.

\begin{lemma}\label{lem:OptProp2}  
Let the maximum of the RQ \eqref{eq:RQB} over a subspace 
$\mathcal{S} \subseteq \text{\emph{span}}(Z)$ be achieved at a vector 
$z_* = \begin{pmatrix}  u_* \cr v_*\end{pmatrix}$.
If the maximum is positive then~$\|u_*\| = \|v_*\|$. 
\end{lemma}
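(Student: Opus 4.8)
The plan is to exploit the scaling freedom built into the structure of $\calS$ recorded in~\eqref{eq:SStruct}: if $\left( u \atop v \right) \in \calS$, then $\left( \alpha u \atop \beta v \right) \in \calS$ for all scalars $\alpha$, $\beta$. First I would observe that both $u_*$ and $v_*$ must be nonzero. Indeed, if either one vanished, the numerator $2 u_*^T A v_*$ of $\rho(z_*)$ in~\eqref{eq:RQB} would be zero, forcing $\rho(z_*) = 0$ and contradicting the assumption that the maximum is positive. In particular $u_*^T A v_* > 0$ and both norms $\|u_*\|$, $\|v_*\|$ are strictly positive.

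Next I would introduce the one-parameter family $z_t = \left( t\, u_* \atop t^{-1} v_* \right)$ for $t > 0$, which lies in $\calS$ by the scaling property above. A direct computation gives $\rho(z_t) = \dfrac{2\, u_*^T A v_*}{t^2 \|u_*\|^2 + t^{-2} \|v_*\|^2}$, so the (positive) numerator is held fixed while the denominator $\varphi(t) = t^2 \|u_*\|^2 + t^{-2}\|v_*\|^2$ varies with $t$. Minimizing $\varphi$ over $t > 0$ (by AM--GM, or elementary calculus) yields the minimal value $2 \|u_*\|\,\|v_*\|$, attained at $t_*^2 = \|v_*\|/\|u_*\|$, and hence $\rho(z_{t_*}) = \dfrac{u_*^T A v_*}{\|u_*\|\,\|v_*\|}$.

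Finally, since $z_*$ maximizes $\rho$ over $\calS$ and $z_{t_*} \in \calS$, we must have $\rho(z_{t_*}) \le \rho(z_*)$, i.e. $\dfrac{u_*^T A v_*}{\|u_*\|\,\|v_*\|} \le \dfrac{2\, u_*^T A v_*}{\|u_*\|^2 + \|v_*\|^2}$. Cancelling the common positive factor $u_*^T A v_*$ gives $\|u_*\|^2 + \|v_*\|^2 \le 2 \|u_*\|\,\|v_*\|$, which, together with the elementary inequality $\|u_*\|^2 + \|v_*\|^2 \ge 2\|u_*\|\,\|v_*\|$, forces equality in AM--GM and therefore $\|u_*\| = \|v_*\|$.

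I do not expect a serious obstacle here: once the scaling trick is set up, the argument reduces to the two directions of AM--GM. The only point that needs to be stated with care is the nonvanishing of $u_*$ and $v_*$, since this is exactly what licenses forming $z_t$ (dividing by $t$) and what allows cancelling $u_*^T A v_*$ at the last step; positivity of the maximum is used precisely to secure this. One could alternatively phrase the same computation with a single substitution $u_* \mapsto \alpha u_*$, $v_* \mapsto \beta v_*$ subject to $\alpha\beta = 1$, but the $t$/$t^{-1}$ version keeps the bookkeeping cleanest.
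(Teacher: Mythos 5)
Your proof is correct and is essentially the paper's argument in slightly different clothing: the paper's rescaling constant $\alpha = \sqrt{\|v_*\|/\|u_*\|}$ is precisely your optimal $t_*$, and the step where you compare $\rho(z_{t_*}) \le \rho(z_*)$ and invoke equality in AM--GM is exactly the strict-increase contradiction the paper reaches. The only difference is presentation (a one-parameter family optimized explicitly, versus the paper's direct contradiction with a single well-chosen scaling); the underlying idea and computation coincide.
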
 
\begin{proof} 
Let us assume that the  contrary is true, i.e., that $\|u_*\| \ne \|v_*\|$. 
Since $\rho(z_*) > 0$, both $u_*$ and $v_*$ are nonzero. 
Thus, we can define $\alpha =\sqrt{ \| v_* \| / \| u_* \|}$ and introduce
the vector $\bar z = \left( \alpha u_*^T \ (1 / \alpha) v_*^T \right)^T$,
which is also in $\calS$. But then $\rho(\bar z) = (2 u_*^T A v_*) / (2 \|u_*\| \|v_*\|)$ 
which is larger than $  \rho(z_*)$ under the assumption that $\|u_*\| \ne \|v_*\|$. 
Hence we have increased the value of $\rho(z_*)$ contradicting  
the fact that it is the maximum. Therefore we must have $\| u_* \| = \| v_* \|$. 
\end{proof}  

While the above result concerns the case where the maximum of the RQ is positive,
the next lemma shows that if a subspace $\calS$ is sufficiently large
then one can also find vectors $z \in \calS$ with $\|u\| = \|v\|$ 
that correspond to a zero maximum.    

\begin{lemma}\label{lem:OptProp3}  
Let the maximum of the RQ~\eqref{eq:RQB} over $\calS \subseteq \text{\emph{span}}(Z)$ be zero and assume that
$\dim(\calS) > \max(s_1, s_2)$, where $s_1 = \dim(\calU)$ and $s_2 = \dim(\calV)$. 
Then there exists a maximizer $z^* \in \calS$, such that $\|u^*\| = \|v^*\|$. 

\end{lemma}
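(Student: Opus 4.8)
The plan is to combine the product structure~\eqref{eq:SStruct} of $\calS$ with the sign-flipping trick already used in the proofs of Lemmas~\ref{lem:OptProp1} and~\ref{lem:OptProp2}, and then reduce the statement to a one-line dimension count. The first step is to notice that a \emph{zero} maximum is an unusually rigid situation: for any nonzero $z = \left( u \atop v \right) \in \calS$, if both $u \ne 0$ and $v \ne 0$, then $\left( -u \atop v \right)$ is also in $\calS$, so both $\rho(z) \le 0$ and $\rho\!\left(\left( -u \atop v \right)\right) = -\rho(z) \le 0$, which forces $\rho(z) = 0$; and if $u = 0$ or $v = 0$ the numerator of~\eqref{eq:RQB} vanishes, so again $\rho(z) = 0$. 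Hence $u^T A v = 0$ for every $u \in \calS_u$ and $v \in \calS_v$, and in fact \emph{every} nonzero vector of $\calS$ attains the maximum. Consequently the lemma reduces to the purely linear-algebraic task of producing a single nonzero $z^* = \left( u^* \atop v^* \right) \in \calS$ with $\|u^*\| = \|v^*\|$.

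For this I would invoke the hypothesis to guarantee that neither component subspace is trivial. Since $\calS_u \subseteq \calU$ and $\calS_v \subseteq \calV$, we have $\dim(\calS_u) \le s_1$ and $\dim(\calS_v) \le s_2$. Combining this with $\dim(\calS) = \dim(\calS_u) + \dim(\calS_v) > \max(s_1,s_2) \ge s_1 \ge \dim(\calS_u)$ gives $\dim(\calS_v) > 0$, hence $\dim(\calS_v) \ge 1$, and the symmetric argument yields $\dim(\calS_u) \ge 1$. Therefore we may choose unit vectors $u^* \in \calS_u$ and $v^* \in \calS_v$; by~\eqref{eq:SStruct} the vector $z^* = \left( u^* \atop v^* \right)$ lies in $\calS$, it is nonzero, and $\|u^*\| = \|v^*\| = 1$. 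By the first step $\rho(z^*) = 0$, so $z^*$ is indeed a maximizer, which proves the claim.

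I do not expect any genuine obstacle here: the only place the hypothesis $\dim(\calS) > \max(s_1,s_2)$ is actually used is the dimension count of the second paragraph, and it is also necessary — if, say, $\calS_v = \{0\}$, then every $z \in \calS$ has $v = 0$, and no nonzero maximizer can satisfy $\|u^*\| = \|v^*\|$, so the conclusion would fail. Everything else is the same scaling mechanism (the $\alpha,\beta$-rescaling of components, here specialized to $\alpha=-1,\ \beta=1$ for the sign flip and to normalizations) that drives Lemmas~\ref{lem:OptProp1} and~\ref{lem:OptProp2}. The one point to keep in mind while writing the details is simply to ensure the exhibited $z^*$ is nonzero so that $\rho(z^*)$ is well defined, which is automatic once $u^*$ and $v^*$ are taken to be unit vectors.
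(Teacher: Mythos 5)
Your proof is correct, and it uses essentially the same ingredients as the paper's: the sign-flipping trick, the dimension count showing $\dim(\calS_u), \dim(\calS_v) \ge 1$, and the observation that the product structure of $\calS$ lets you normalize components independently. The only organizational difference is that you first establish the stronger structural fact that \emph{every} nonzero vector in $\calS$ attains the maximum of zero, and then exhibit one with unit components, whereas the paper first constructs a specific $z_0$ with both components nonzero, argues $\rho(z_0)=0$ by the sign-flip plus the maximum bound, and then normalizes; these are the same argument in a slightly different order.
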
 
\begin{proof} 
 First we show the existence of a  maximizer $z_0$ of~\eqref{eq:RQB} with nonzero 
components $u_0$ and $v_0$. 
The condition  $\dim(\calS) \equiv \dim (\calS_u) + \dim (\calS_v) > \max(s_1, s_2)$,
with $ \dim (\calS_u) \le s_1$ and $ \dim (\calS_v) \le s_2$, implies that 
neither of the subspaces $\calS_u, \calS_v$ has zero dimension, so 
$\calS$ contains a vector $z_0 = (u_0^T, v_0^T)^T$ such that  $u_0, v_0 \ne 0$.
Without loss of generality, we assume that $\rho(z_0) \geq 0$ (otherwise, choose
$z_0 = (-u_0^T, v_0^T)^T$ which is also in $\calS$). But
we cannot have $\rho(z_0) >0$ since the maximum of $\rho$ is zero.
Therefore $\rho(z_0) = 0$ and $z_0$ is the desired vector.
To complete the proof, define $z_* \equiv (u_*^T, v_*^T)^T$ with
$u_* = u_0 /\| u_0 \|$, $v_* = v_0 /\| v_0 \|$. We clearly still have $\rho(z_*)=0$ 
and the vector $z_*$, which belongs to $\calS$, has the desired property.
\end{proof}


The lemmas show that under the specified conditions, 
the vectors $u$ and $v$ of the RQ maximizers are (or can be chosen to be) of 
the same norm which can be set to one without loss  of generality. 
In particular, since $\theta_1 $ is the maximum of the RQ over the  
space of all nonzero vectors $z$ of the form \eqref{eq:vecz}, we readily 
obtain the following  
characterization of the largest Ritz singular value,
which can also be found in~\cite{Hochstenbach01}:  
\eq{eq:theta1} 
\theta_1 = \max_{u \in \calU; v \in \calV; \atop \| u \| = \| v \|=1} \quad  u^T A v \geq 0.
\en 
 
We now  wish to generalize this result by 
establishing  a min-max type characterization for all Ritz singular values. 
For an $n\times  n$ Hermitian  matrix $M$ and a search subspace $\calZ$
of dimension $s$ the decreasingly labeled Ritz values $\theta_i$ 
are given by
\eq{eq:minmaxP1}  
\theta_i = 
\min_{\calS \subseteq \calZ   \atop \text{dim}  
(\calS) = s-i+1 }  \quad \max_{x \ \in \ \calS ,  x \ne 0} \quad  
\frac{(Mx,x)}{(x,x)}, \quad i = 1, \ldots, s;   
\en
see, e.g.,~\cite{Parlett-book,Saad-book3}.
This expression, combined with the results of the current section, applied to our situation, 
where $M = B$ is the augmented matrix in  
\eqref{eq:augment} and $\calZ = \Span\{Z\}$ where  $Z$ is defined 
in \eqref{eq:Zbasis}, results in the following  
Min-Max characterization of the Ritz singular values. 
\begin{theorem}\label{thm:sv_charact} 
The Ritz singular values of a matrix $A \in \IR^{m \times n}$     
with respect to the left and right search subspaces  
$\calU = \text{\emph{span}}\{U\} \subseteq \IR^{m}$ and  
$\calV = \text{\emph{span}}\{V\}\subseteq \IR^{n}$,
 with $\text{dim} (\calU ) = s_1$, $\dim (\calV )= s_2$,  and $s_1+s_2=s$,
admit the following characterization: 
\eq{eq:minmaxP2} 
\theta_i =  
\min_{\calS_u \subseteq \calU,  \calS_v \subseteq \calV, 
\atop \text{dim} (\calS_u) + \text{dim} (\calS_v) = s-i+1}  
\quad  
\max_{  
u \in \calS_u  ; \ v \in \calS_v ; 
 \atop  
 \| u \| = \| v\|=1   
} \quad  u^T A v,  \qquad i = 1, \ldots, \min(s_1, s_2) . 
\en  
\end{theorem}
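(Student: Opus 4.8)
The plan is to read Theorem~\ref{thm:sv_charact} as the Courant--Fischer min-max principle \eqref{eq:minmaxP1}, applied to the augmented matrix $M = B$ of \eqref{eq:augment} and the search space $\calZ = \Span\{Z\}$ of \eqref{eq:Zbasis}, with the generic inner maximization rewritten in ``singular value form'' by means of Lemmas~\ref{lem:OptProp1}--\ref{lem:OptProp3}. First I would identify which Ritz value of $B$ each $\theta_i$ is. By the proposition of Section~\ref{sec:RRSVD}, the SV-RR procedure is the ordinary Rayleigh--Ritz procedure for $B$ on $\Span\{Z\}$, whose projected matrix $Z^T B Z$ is block $2\times 2$ with zero diagonal blocks and off-diagonal blocks $H = U^T A V \in \IR^{s_1 \times s_2}$ and $H^T$. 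Its eigenvalues are therefore $\pm\theta_1, \ldots, \pm\theta_{\min(s_1,s_2)}$ together with $|s_1 - s_2|$ zeros, so when listed in decreasing order the first $\min(s_1,s_2)$ of them are exactly $\theta_1 \ge \cdots \ge \theta_{\min(s_1,s_2)}$. Hence, for $i = 1,\ldots,\min(s_1,s_2)$, $\theta_i$ is the $i$-th largest Ritz value of $B$ on $\calZ$, and \eqref{eq:minmaxP1} with $s = s_1+s_2$ gives
\[
\theta_i = \min_{\calS \subseteq \calZ,\ \dim(\calS) = s-i+1}\ \max_{x \in \calS,\ x \ne 0}\ \rho(x),
\]
where $\rho$ is the Rayleigh quotient \eqref{eq:RQB}.

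The second and main step is to turn the inner maximum of $\rho$ over $\calS$ into the constrained maximum of $u^T A v$. Here I would use the product structure \eqref{eq:SStruct} to write $\calS = \{(u^T,v^T)^T : u \in \calS_u,\ v \in \calS_v\}$ with $\calS_u \subseteq \calU$, $\calS_v \subseteq \calV$ and $\dim(\calS_u) + \dim(\calS_v) = \dim(\calS) = s-i+1$. The key arithmetic observation is that for $i \le \min(s_1,s_2)$ one has $\dim(\calS) = s-i+1 > \max(s_1,s_2)$; by the dimension count used in the proof of Lemma~\ref{lem:OptProp3} this both forces $\calS_u$ and $\calS_v$ to be nonzero --- so the constraint set $\{u \in \calS_u,\ v \in \calS_v,\ \|u\| = \|v\| = 1\}$ is nonempty --- and makes Lemma~\ref{lem:OptProp3} applicable. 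Combining the three lemmas: the maximum of $\rho$ over $\calS$ is nonnegative (Lemma~\ref{lem:OptProp1}); if it is positive it is attained at a vector with $\|u_*\| = \|v_*\|$ (Lemma~\ref{lem:OptProp2}), and if it is zero it is still attained at such a vector (Lemma~\ref{lem:OptProp3}). Rescaling that maximizer to $\|u_*\| = \|v_*\| = 1$ turns $\rho(z_*)$ into $u_*^T A v_*$, and conversely every admissible pair satisfies $u^T A v = \rho((u^T,v^T)^T) \le \max_{\calS}\rho$; hence $\max_{x\in\calS}\rho(x) = \max_{u\in\calS_u,\,v\in\calS_v,\,\|u\|=\|v\|=1} u^T A v$.

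Finally I would substitute this identity into the Courant--Fischer expression above and note that, via \eqref{eq:SStruct}, as $\calS$ runs over the $(s-i+1)$-dimensional subspaces of $\calZ$ the pair $(\calS_u,\calS_v)$ runs over all pairs of subspaces of $\calU$ and $\calV$ with $\dim(\calS_u) + \dim(\calS_v) = s-i+1$; this yields exactly \eqref{eq:minmaxP2} for $i = 1,\ldots,\min(s_1,s_2)$. (For $i=1$ the formula specializes to \eqref{eq:theta1}, a useful sanity check.)

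The step I expect to be the real obstacle is the second one, and within it the zero-maximum case together with the nonemptiness of the normalized constraint set: both hinge on the hypothesis $\dim(\calS) > \max(s_1,s_2)$, and checking that this hypothesis holds precisely on the range $i \le \min(s_1,s_2)$ --- which is also why the theorem is stated only for that range --- is the one genuinely delicate bookkeeping point. Everything else follows mechanically from the proposition of Section~\ref{sec:RRSVD}, the structure~\eqref{eq:SStruct}, and the standard min-max theorem~\eqref{eq:minmaxP1}.
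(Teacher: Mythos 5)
Your proposal is correct and follows essentially the same route as the paper: reduce to Courant--Fischer for $B$ on $\calZ = \Span\{Z\}$, exploit the product structure \eqref{eq:SStruct} of subspaces of $\calZ$, and use Lemmas~\ref{lem:OptProp1}--\ref{lem:OptProp3} (with the bookkeeping $i\le\min(s_1,s_2)\Rightarrow\dim(\calS)=s-i+1>\max(s_1,s_2)$) to rewrite the inner maximum of $\rho$ as the constrained maximum of $u^TAv$. The one thing you spell out that the paper leaves implicit is the opening identification that the $\theta_i$ are precisely the largest $\min(s_1,s_2)$ Ritz values of $B$ on $\calZ$, via the eigenvalues $\pm\theta_j$ (plus $|s_1-s_2|$ zeros) of the projected antidiagonal block matrix $Z^TBZ$ --- a useful clarification, since the paper relies on the earlier Proposition for this without restating it.
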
 
 
\begin{proof}  
We start from \eqref{eq:minmaxP1}. 
Let  $\calZ$ be spanned by a basis of the form \eqref{eq:Zbasis}. 
A candidate   subspace $\calS$  of $\calZ$ of this type is of the 
form~\eqref{eq:SStruct}.
The dimension of this 
subspace $\calS$  must be $s-i+1$, which translates to the  
requirement  that $\text{dim}\ (\calS_u) +  
\text{dim}\ ( \calS_v ) = s-i+1$. Next, 
we replace $M$ by  
$B$ in \eqref{eq:minmaxP1}. Then $(Mx,x)/(x,x) $ yields the expression in  
\eqref{eq:RQB}. 
Lemma~\ref{lem:OptProp2} shows that a positive 
maximum of this RQ in \eqref{eq:minmaxP1} is 
reached at vectors with $\|u\|=\|v\|$, so we can scale both $u$ and $v$ to have  
unit norm.
Note that the assumption $i \le \min(s_1, s_2) $ implies that 
$\dim(\calS)$ is larger than $\max(s_1, s_2)$. 
Therefore, if the maximum is zero then, by Lemma~\ref{lem:OptProp3},
there exists a maximizer with $\|u\| = \|v\|$ which can also be scaled so that
both $u$ and $v$ have unit norm. This yields \eqref{eq:minmaxP2}.  
\end{proof} 

Note that the same ideas applied to the augmented matrix~\eqref{eq:augment} 
with the search subspace $\Span(Z) = \IR^{m+n}$, where 
$U$ and $V$ in~\eqref{eq:Zbasis} are orthonormal bases of $\IR^m$ and $\IR^n$, 
respectively, lead to the Min-Max characterization of the singular values of $A$:
\eq{eq:minmaxSV} 
\sigma_i =  
\min_{\calS_u \subseteq \IR^m,  \calS_v \subseteq \IR^n, 
\atop \text{dim} (\calS_u) + \text{dim} (\calS_v) = m+n-i+1}  
\quad  
\max_{  
u \in \calS_u  ; \ v \in \calS_v ; 
 \atop  
 \| u \| = \| v\|=1   
} \quad  u^T A v,  \qquad i = 1, \ldots, \min(m, n) . 
\en  
One can see that~\eqref{eq:minmaxP2}  
is identical to~\eqref{eq:minmaxSV} with $\IR^m$ replaced by $\calU$
and $\IR^n$ by $\calV$.  This observation provides an interpretation of the 
optimality of the SV-RR procedure:  it constructs a solution
that admits the same characterization of the singular values in the given subspaces.   

The following statement is a direct consequence of Theorem~\ref{thm:sv_charact}. 
\begin{corollary}\label{cor:monoton} 
Let $\sigma_i$ and $\theta_i$ be labeled in a decreasing order. 
Then the Ritz values approximate the singular values from below, i.e., 
$\theta_i \le \sigma_i$; $i = 1, \ldots, \min(s_1, s_2)$.  
Additionally, if we have  a sequence of  
expanding subspaces $\left\{ \calZ_j \right\}$, such that   
$\calZ_j \subseteq \calZ_{j+1}$, 
then $ \theta_i^{(j)} \le \theta_i^{(j+1)} \le \sigma_i $, where 
$\theta_i^{(j)}$ is the $i$-th Ritz singular value with respect to the
subspace $\calZ_j$. 
\end{corollary}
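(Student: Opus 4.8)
The plan is to deduce both assertions from a single monotonicity statement: if $\calZ = \Span\{Z\}$ and $\calZ' = \Span\{Z'\}$ are two search subspaces with $Z, Z'$ of the block form \eqref{eq:Zbasis} and $\calZ \subseteq \calZ'$, then $\theta_i(\calZ) \le \theta_i(\calZ')$ for every $i \le \min(s_1, s_2)$, where $s_1, s_2$ are the dimensions of the left and right component spaces of the \emph{smaller} subspace $\calZ$ (I write $\theta_i(\calZ)$ for the $i$-th Ritz singular value associated with $\calZ$). The inequality $\theta_i \le \sigma_i$ is then the special case $\calZ' = \IR^{m+n}$, for which $\theta_i(\calZ') = \sigma_i$ by \eqref{eq:minmaxSV}; and applying the statement to consecutive members of a nested family $\{\calZ_j\}$, followed once more by the case $\calZ' = \IR^{m+n}$, yields the chain $\theta_i^{(j)} \le \theta_i^{(j+1)} \le \sigma_i$.

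One small preliminary is needed first: for two block-diagonal subspaces, $\calZ \subseteq \calZ'$ is equivalent to $\calU \subseteq \calU'$ and $\calV \subseteq \calV'$ for the component spaces. This is immediate from the structure \eqref{eq:SStruct}: testing with vectors $(u^T, 0)^T$ and $(0, v^T)^T$ shows that each component space of $\calZ$ sits inside the corresponding component space of $\calZ'$. Hence a nested family of block-diagonal subspaces is nothing but a pair of nested families in $\IR^m$ and $\IR^n$, and the characterization \eqref{eq:minmaxP2} of Theorem~\ref{thm:sv_charact} applies to each $\calZ_j$; note that $\min(\dim\calU_j, \dim\calV_j)$ is nondecreasing in $j$, so the index $i$ stays admissible along the sequence.

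The monotonicity statement itself is the standard ``intersect the minimizer'' argument applied to the min-max formula \eqref{eq:minmaxP2}. Let $(\hat\calS_u, \hat\calS_v)$, with $\hat\calS_u \subseteq \calU'$, $\hat\calS_v \subseteq \calV'$ and $\dim\hat\calS_u + \dim\hat\calS_v = s' - i + 1$ (where $s' = \dim\calU' + \dim\calV'$), attain the outer minimum defining $\theta_i(\calZ')$. Put $\calS_u := \hat\calS_u \cap \calU$ and $\calS_v := \hat\calS_v \cap \calV$. Since $\hat\calS_u$ and $\calU$ both lie in $\calU'$, the dimension bound $\dim(X \cap Y) \ge \dim X + \dim Y - \dim W$ for subspaces $X, Y$ of $W$ gives $\dim\calS_u \ge \dim\hat\calS_u + \dim\calU - \dim\calU'$, and likewise for $\calS_v$; summing, $\dim\calS_u + \dim\calS_v \ge (s' - i + 1) + (\dim\calU + \dim\calV) - s' = s - i + 1$. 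Trimming these spaces down to total dimension exactly $s - i + 1$ only shrinks the set of admissible pairs $(u,v)$ and therefore cannot increase the inner maximum; so we obtain a pair admissible for the outer minimization defining $\theta_i(\calZ)$ on which the inner maximum is at most $\max\{\, u^T A v : u \in \hat\calS_u,\ v \in \hat\calS_v,\ \|u\| = \|v\| = 1 \,\} = \theta_i(\calZ')$. Because $\theta_i(\calZ)$ is the minimum over all such pairs, $\theta_i(\calZ) \le \theta_i(\calZ')$, which completes the argument once combined with the two reductions described above.

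I expect the only delicate point to be the dimension bookkeeping in the trimming step: one must ensure that after cutting the intersected spaces down to total dimension $s - i + 1$, neither $\calS_u$ nor $\calS_v$ degenerates to $\{0\}$, for otherwise the constraint ``$u \in \calS_u$, $\|u\| = 1$'' becomes vacuous and the inner maximum is not the intended quantity. This is precisely where the hypothesis $i \le \min(s_1, s_2)$ is used, exactly as in the proof of Theorem~\ref{thm:sv_charact}: it forces $s - i + 1 \ge \max(s_1, s_2) + 1$, and together with $\dim\calS_u \le s_1$, $\dim\calS_v \le s_2$ this guarantees $\dim\calS_u \ge s_1 - i + 1 \ge 1$ and $\dim\calS_v \ge s_2 - i + 1 \ge 1$ before trimming, so that both components can be kept of dimension at least one afterwards. (Alternatively one could bypass the min-max bookkeeping altogether and invoke Cauchy interlacing for the Ritz values of the augmented matrix $B$ on the nested subspaces $\calZ_j$, whose limiting eigenvalues are the $\pm\sigma_i$; but the route above keeps everything inside the framework of Theorem~\ref{thm:sv_charact}, as the statement of the corollary suggests.)
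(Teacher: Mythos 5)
Your proof is correct and follows the route the paper intends: the paper presents the corollary as ``a direct consequence of Theorem~\ref{thm:sv_charact}'' and gives no proof, and your intersection-and-trim argument based on the min-max formula~\eqref{eq:minmaxP2} supplies exactly the details that are being left to the reader. Two points you handle carefully deserve mention: (i) you correctly reduce both claims to a single monotonicity statement $\theta_i(\calZ) \le \theta_i(\calZ')$ for nested block-structured subspaces, with $\theta_i \le \sigma_i$ being the case $\calZ' = \IR^{m+n}$ via~\eqref{eq:minmaxSV}, and (ii) you correctly locate where $i \le \min(s_1, s_2)$ enters — it forces $\dim\calS_u \ge s_1 - i + 1 \ge 1$ and $\dim\calS_v \ge s_2 - i + 1 \ge 1$ for every admissible pair, so the inner maximum is never taken over an empty set after intersection or trimming. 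Your parenthetical alternative via Cauchy interlacing applied to $Z^T B Z$ is also sound (the decreasingly ordered eigenvalues $\mu_1, \ldots, \mu_{\min(s_1,s_2)}$ of $Z^T B Z = \left(\begin{smallmatrix} 0 & H \\ H^T & 0 \end{smallmatrix}\right)$ coincide with $\theta_1, \ldots, \theta_{\min(s_1,s_2)}$, and interlacing gives $\mu_i \le \lambda_i(B) = \sigma_i$), but staying inside Theorem~\ref{thm:sv_charact}, as you did, is the reading the paper's phrasing suggests.
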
 
 
 
 
 
The results of this section suggest that the proximity  
of $\tilde \Sigma_k$,  $\tilde U_k$, and  $\tilde V_k$, produced by Algorithm~\ref{alg:sv-rr}, 
to the singular  triplets of $A$ is governed by the choice 
of $U$ and $V$. If both are properly chosen, then  SV-RR($A$, $U$, $V$)  
can give an appealing approach for the  updating  problem.   
In particular, as can be seen from Theorem~\ref{thm:sv_charact},  
if $\Span (U)$ and $\Span(V)$ contain the left and  
right dominant singular subspaces of $A$, $\Span (U_k)$ and $\Span(V_k)$,  
then Algorithm~\ref{alg:sv-rr} readily delivers the $k$ \textit{exact} singular  
triplets of interest. 
If, additionally, the number of columns in $U$ or $V$ is sufficiently small 
(not much larger than $k$) then the computational costs related to the procedure
can be negligibly low.  
 
In  practice, however, the construction  of search  subspaces that 
include the  targeted singular subspaces  and, at the same  time, have 
small dimensions  can be problematic.  It  is likely that  in order to 
obtain the inclusion  of the singular subspaces, both  $s_1$ and $s_2$ 
have  to be  large (see Corollary~\ref{cor:monoton}),  
and  this can  make  Algorithm~\ref{alg:sv-rr} too 
costly to  be used  as a  fast updating scheme.   In what  follows, we 
adapt this  viewpoint to  address possible difficulties  with existing 
methods.


\section{LSI updating methods viewed as projection  
procedures}\label{sec:lsi_proj} 
We now consider a common situation in IR which  
arises when a certain term-document matrix $A$ is updated. In 
the case when a few documents are added,   
the updated term-document matrix can be written as  
\begin{equation}\label{eqn:update_doc} 
\tilde A_D = [A, \ D], 
\end{equation}  
where $D \in \mathbb{R}^{m \times p}$, 
represents the matrix of added documents.  
Similarly, if terms are added, then the update matrix 
takes the form  
\begin{equation}\label{eqn:update_term} 
\tilde A_T = \left[ 
\begin{array}{c} 
 A \\  
 T  
\end{array} 
\right], 
\end{equation}  
where $T \in \mathbb{R}^{p \times n}$ corresponds to the added terms.  

It is often the case that the arrival of new documents triggers the addition
of new terms. In this situation the overall update can be decomposed into the sequence
of the two basic ones:~\eqref{eqn:update_term} followed by~\eqref{eqn:update_doc}. 
Once the incorporation of the new terms and documents is completed, the weights
of several affected terms should be accordingly adjusted. 
This motivates yet another type of update.

In particular, it is frequently required
to correct the weights  of the selected $p$ terms
throughout the whole document collection, 
in which case  
the updated  $A$ is given by    
\begin{equation}\label{eqn:update_cw} 
\tilde A_{CW} = A + C W, 
\end{equation}  
where $C \in \mathbb{R}^{m \times p}$ is a ``selection matrix'' 
and $W \in \mathbb{R}^{p \times n}$ contains weight corrections.     
The rows of $C$ that correspond to the terms whose weights are corrected 
represent the rows of the $p$-by-$p$ identity matrix, whereas the remaining rows of 
$C$ are zero.  Each row of the matrix $W$ contains the differences between 
the old and new weights for the corresponding term, with different entries specifying 
corrections to different documents.

The  goal of  the  updating algorithms  is  to compute  approximations 
$\tilde  \Sigma_k$, $\tilde  U_k$, and  $\tilde V_k$  to  the dominant 
singular   triplets of the updated matrices in~(\ref{eqn:update_doc})--(\ref{eqn:update_cw}) 
by exploiting the  knowledge of the singular triplets  $\Sigma_k$, $U_k$, 
and $V_k$ of $A$. 
A common approach is based on the idea of replacing $A$  
by its best rank-$k$ approximation $A_k = U_k \Sigma_k V_k^T$, and  
considering 
\begin{equation}\label{eqn:updates_lowrank} 
A_D = [A_k \ D], \  
\quad  
A_T = \left[ 
\begin{array}{c} 
 A_k \\  
 T  
\end{array} 
\right],  
\quad  
\ \mbox{and} \  \quad  
A_{CW} = A_k + C W  
\end{equation} 
as  substitutes for the updated matrices        
in~(\ref{eqn:update_doc})--(\ref{eqn:update_cw}). 
The $k$ dominant singular triplets of~(\ref{eqn:updates_lowrank}) are then regarded as  
approximations of the ``true'' updated singular triplets of~(\ref{eqn:update_doc})--(\ref{eqn:update_cw}), 
and are used to evaluate the relevance scores in~(\ref{eqn:scores}).

The result of Zha and Simon~\cite{Zha.Simon:99} shows that it is possible to compute the \textit{exact} 
$k$ dominant singular triplets of the matrices in~(\ref{eqn:updates_lowrank})  
without invoking standard singular value solvers from scratch.  
The closeness of the computed triplets to those of $\tilde A_D$, $\tilde A_T$,  
and $\tilde A_{CW}$ in~(\ref{eqn:update_doc})--(\ref{eqn:update_cw}) 
is justified by exploiting the so-called approximate ``low-rank-plus-shift''  
structure of $A$; see also~\cite{Zha.Zhang:00} for a more rigorous analysis.  
Below, we briefly review the updating schemes presented in \cite{Zha.Simon:99}. 
 
\subsection{Updating algorithms of Zha and Simon~\cite{Zha.Simon:99}}\label{subsec:zha} 
 
Consider first the case of adding new documents $D$. 
Let  
\begin{equation}\label{eqn:qr_doc} 
(I - U_k U_k^T) D = \hat U_p R  
\end{equation} 
be the truncated QR decomposition of $(I - U_k U_k^T) D$, where 
$\hat U_p \in \mathbb{R}^{m \times p}$ has orthonormal columns and $R \in \mathbb{R}^{p \times p}$ is  
upper triangular.  
Given~(\ref{eqn:qr_doc}), one can observe that   
\begin{equation}\label{eqn:relation_docs} 
A_D = [U_k, \ \hat U_p] 
H_D 
\left[ 
\begin{array}{cc} 
 V_k & 0 \\  
 0   & I_p  
\end{array} 
\right]^T \;, \ 
H_D = \left[ 
\begin{array}{cc} 
 \Sigma_k & U_k^T D \\  
 0        & R  
\end{array} 
\right] \;, 
\end{equation} 
where $I_p$ denotes the $p$-by-$p$ identity matrix.  
Thus, if $\Theta_k$, $F_k$, and $G_k$ are the matrices corresponding to the $k$ dominant singular values of  
$H_D \in \mathbb{R}^{(k+p) \times (k+p)}$ 
and their left and right singular vectors, respectively,  
then the desired updates $\tilde \Sigma_k$, $\tilde U_k$, and $\tilde V_k$ are given by 
\begin{equation}\label{eqn:upd_docs} 
\tilde \Sigma_k = \Theta_k, \ \tilde U_k = [U_k, \ \hat U_p] F_k, \ \mbox{and} \ \tilde V_k =  
\left[ 
\begin{array}{cc} 
 V_k & 0 \\  
 0   & I_p  
\end{array} 
\right] G_k \;.  
\end{equation}    
This updating procedure is summarized in the following algorithm.

\begin{algorithm}[Adding documents (Zha--Simon~\cite{Zha.Simon:99})]\label{alg:doc_zha} 
\emph{Input:} $\Sigma_k$, $U_k$, $V_k$, $D$. \emph{Output:} $\tilde \Sigma_k$, $\tilde U_k$, $\tilde V_k$. 
\begin{enumerate}  
\item \emph{Construct} $(I - U_k U_k^T) D$. 
\emph{Compute the QR decomposition~(\ref{eqn:qr_doc})}. 
\item \emph{Construct} $H_D$ \emph{in}~(\ref{eqn:relation_docs}).  
\emph{Compute the matrices} $\Theta_k$, $F_k$, \emph{and} $G_k$ \emph{that correspond to the} $k$ \emph{dominant singular triplets of} 
$H_D$.   
\item  
\emph {Return} $\tilde \Sigma_k$, $\tilde U_k$, \emph{and} $\tilde V_k$  
\emph{defined by}~(\ref{eqn:upd_docs}).  
\end{enumerate} 
\end{algorithm} 
\vspace{0.1in}

Similarly, if  new terms are added, then the following equality holds: 
\begin{equation}\label{eqn:relation_terms} 
A_T =  
\left[ 
\begin{array}{cc} 
 U_k^T & 0 \\  
 0   & I_p  
\end{array} 
\right] 
H_T 
[V_k, \ \hat V_p] \;, \ 
H_T = \left[ 
\begin{array}{cc} 
 \Sigma_k & 0 \\  
 TV_k        & L  
\end{array} 
\right] \;. 
\end{equation} 
Here, $\hat V_p$ and $L^T$ are the factors in the truncated QR decomposition of $(I - V_k V_k^T) T^T$, 
\begin{equation}\label{eqn:qr_term} 
(I - V_k V_k^T) T^T = \hat V_p L^T \;. 
\end{equation} 
The updated singular triplets are then defined as  
\begin{equation}\label{eqn:upd_terms} 
\tilde \Sigma_k = \Theta_k, \ \tilde U_k =  
\left[ 
\begin{array}{cc} 
 U_k & 0 \\  
 0   & I_p  
\end{array} 
\right] F_k, \ \mbox{and} \  
\tilde V_k = [V_k, \ \hat V_p] G_k,   
\end{equation}    
where $\Theta_k$, $F_k$, and $G_k$ now denote the matrices of the $k$ dominant singular triplets of  
$H_T \in \mathbb{R}^{(k+p) \times (k+p)}$. 
 
\begin{algorithm}[Adding terms (Zha--Simon~\cite{Zha.Simon:99})]\label{alg:term_zha} 
\emph{Input:} $\Sigma_k$, $U_k$, $V_k$, $T$. \emph{Output:} $\tilde \Sigma_k$, $\tilde U_k$, $\tilde V_k$. 
\begin{enumerate}  
\item \emph{Construct} $(I - V_k V_k^T) T^T$. 
\emph{Compute the QR decomposition~(\ref{eqn:qr_term})}. 
\item \emph{Construct} $H_T$ \emph{in}~(\ref{eqn:relation_terms}).  
\emph{Compute the matrices} $\Theta_k$, $F_k$, \emph{and} $G_k$ \emph{that correspond to the} $k$  
\emph{dominant singular triplets of} $H_T$.   
\item  
\emph {Return} $\tilde \Sigma_k$, $\tilde U_k$, \emph{and} $\tilde V_k$   
\emph{defined by}~(\ref{eqn:upd_terms}).  
\end{enumerate} 
\end{algorithm} 
\vspace{0.1in}

Finally, in the case of correcting the term weights,   
\begin{equation}\label{eqn:relation_cw} 
A_{CW} =  
[U_k, \ \hat U_p]  
H_{CW} 
[V_k, \ \hat V_p]^T \;, \ 
H_{CW} =  
\left[ 
\begin{array}{cc} 
 \Sigma_k & 0 \\  
 0        & 0  
\end{array} 
\right] + 
\left[ 
\begin{array}{c} 
 U_k^T C  \\  
 R         
\end{array} 
\right]  
\left[  
W V_k, \ L  
\right] \;, 
\end{equation} 
where $\hat U_p$, $R$, $\hat V_p$, and $L$ are given by the truncated QR decompositions   
\begin{equation}\label{eqn:qr_cw} 
(I - U_k U_k^T) C = \hat U_p R, \qquad (I - V_k V_k^T) W^T = \hat V_p L^T.   
\end{equation} 
%
Thus, assuming that $\Theta_k$, $F_k$, and $G_k$ are the matrices of the $k$ dominant singular triplets  
of $H_{CW} \in \mathbb{R}^{(k+p) \times (k+p)}$, 
\begin{equation}\label{eqn:upd_cw} 
\tilde \Sigma_k = \Theta_k, \ \tilde U_k = [U_k, \ \hat U_p] F_k, \ \mbox{and} \ \tilde V_k = [V_k, \hat V_p] G_k.  
\end{equation}    
 
\begin{algorithm}[Correcting weights (Zha--Simon~\cite{Zha.Simon:99})]\label{alg:cw_zha} 
\emph{Input:} $\Sigma_k$, $U_k$, $V_k$, $C$, $W$. \emph{Output:} $\tilde \Sigma_k$, $\tilde U_k$, $\tilde V_k$. 
\begin{enumerate}  
\item \emph{Construct} $(I - U_k U_k^T) C$ \emph{and} $(I - V_k V_k^T) W^T$. 
\emph{Compute the QR decompositions}~(\ref{eqn:qr_cw}). 
\item \emph{Construct} $H_{CW}$ \emph{in}~(\ref{eqn:relation_cw}).  
\emph{Compute the matrices} $\Theta_k$, $F_k$, \emph{and} $G_k$ \emph{that correspond to the} $k$  
\emph{dominant singular triplets of} $H_{CW}$.   
\item  
\emph {Return} $\tilde \Sigma_k$, $\tilde U_k$, \emph{and} $\tilde V_k$  
\emph{defined by} (\ref{eqn:upd_cw}).  
\end{enumerate} 
\end{algorithm} 
\vspace{0.1in} 
 

\subsection{The Rayleigh-Ritz viewpoint}\label{subsec:RRview} 
It is easy to see that Algorithm~\ref{alg:doc_zha}  
is equivalent to SV-RR($A_D$, $U$, $V$) with  
\begin{equation}\label{eqn:MUV_doc} 
U = \left[ U_k, \ \hat U_p \right], \   
V =  
\left[ 
\begin{array}{cc} 
 V_k & 0 \\  
 0   & I_p  
\end{array} 
\right].  
\end{equation}   
Note that, in this case, the matrix $H_D$ in~(\ref{eqn:relation_docs}) is precisely the projected matrix $H$  
in step 1 of Algorithm~\ref{alg:sv-rr}.  
 
The fact that Algorithm~\ref{alg:sv-rr} yields the exact dominant singular   
triplets of $A_D$ comes from the observation that  
\[ 
\Range  ( A_D ) = \Span ( U_k )  \oplus \Range ((I - U_k U_k^T) D)  \;,  
\] 
and that the columns of $\hat U_p$ in~(\ref{eqn:qr_doc}) 
form an orthonormal basis of  
$\Range  ((I - U_k U_k^T) D)$. Hence, 
$\Span  (U) = \Range (A_D)$, 
i.e., the search subspace $\Span (U)$ with $U$ defined in~(\ref{eqn:MUV_doc})  
must contain the left dominant singular subspace of $A_D$.  
The corresponding right singular subspace is then  
contained in $\Range ( A_D^T  U )$. But 
\[ 
A_D^T U  =   
\left[ 
\begin{array}{cc} 
 A_k^T U_k & A_k^T \hat U_p \\  
 D^T U_k   & D^T \hat U_p   
\end{array} 
\right]  =  
\left[ 
\begin{array}{cc} 
 V_k \Sigma_k & 0 \\  
 D^T U_k   & D^T \hat U_p   
\end{array} 
\right]  =  
\left[ 
\begin{array}{cc} 
 V_k & 0 \\  
 0   & I_p  
\end{array} 
\right] 
\left[ 
\begin{array}{cc} 
 \Sigma_k & 0 \\  
 D^T U_k   & D^T \hat U_p   
\end{array} 
\right] ,  
\] 
implying that 
$ \Span (V) = \Range ( A_D^T U ) $.  
Thus, the search subspace $\Span(V)$ defined in~(\ref{eqn:MUV_doc})  
must contain the right singular subspace of $A_D$. 
As a result, since both $\Span (U)$ and $\Span (V)$  
contain the dominant singular subspaces, 
a run of SV-RR($A_D$, $U$, $V$) in Algorithm~\ref{alg:doc_zha} is indeed  
guaranteed to deliver the $k$ exact dominant singular triplets of $A_D$. 
 
Similarly, for  the case of added new terms,  
Algorithm~\ref{alg:term_zha} , can be interpreted  
as SV-RR($A_T$, $U$, $V$) with 
\begin{equation}\label{eqn:MUV_term} 
U =  
\left[ 
\begin{array}{cc} 
 U_k & 0 \\  
 0   & I_p  
\end{array} 
\right], \   
V = \left[ V_k, \ \hat V_p \right] \; ,  
\end{equation}   
where $\hat V_p$ is defined by the QR decomposition~(\ref{eqn:qr_term}). 
Algorithm~\ref{alg:cw_zha}  
is equivalent to SV-RR($A_{CW}$, $U$, $V$) with 
\begin{equation}\label{eqn:MUV_cw} 
U = \left[ U_k, \ \hat U_p \right], \ 
V = \left[ V_k, \ \hat V_p \right] \; ,  
\end{equation}   
where $\hat U_p$ and $\hat V_p$ are given by~(\ref{eqn:qr_cw}).  
The matrices $H_T$ and $H_{CW}$ in~(\ref{eqn:relation_terms}) and~(\ref{eqn:relation_cw}) 
are the projected matrices that arise at step 1 of Algorithm~\ref{alg:sv-rr} with the  
appropriate input. 
The fact that the dominant singular triplets of $A_T$ and $A_{CW}$ are computed \textit{exactly}  
can be deduced by following the arguments similar to those used above to justify the exactness 
of Algorithm~\ref{alg:doc_zha}.  
 
 Finally, note that the updating methods from~\cite{BeDuBri:95, 
  OBrien:94},  which preceded  the  schemes   of  Zha  and 
Simon~\cite{Zha.Simon:99},  can  also  be  easily  interpreted  in  the 
framework  of the  projection procedure  in Algorithm~\ref{alg:sv-rr}. 
For  example, the  case of  adding new  documents in~\cite{BeDuBri:95, 
  OBrien:94}  is  realized  by   SV-RR($A_D$,  $U_k$,  $V$)  with  $V$ 
in~(\ref{eqn:MUV_doc}). The updating  strategies for the remaining two 
types  of  update correspond  to  SV-RR($A_T$,  $U$,  $V_k$) with  $U$ 
in~(\ref{eqn:MUV_term}) and SV-RR($A_{CW}$, $U_k$, $V_k$).

\section{Updating by the SV-RR with smaller subspaces}\label{sec:new_updates} 
Consider     the    computational     complexity of 
Algorithm~\ref{alg:doc_zha}.   In   its  first  step,   the  algorithm 
performs $\mathcal{O}(mkp)$  operations to form  the matrix $(I  - U_k 
U_k^T)  D$ and  $\mathcal{O}(m  p^2)$ operations  to  complete the  QR 
decomposition~(\ref{eqn:qr_doc}).  Assuming that  $U_k^T  D$ has  been 
precomputed   at  step   1,   the   cost  of  the second     step   is 
$\mathcal{O}((k+p)^3)$, which corresponds to the cost of the SVD of 
the $(k+p)$-by-$(k+p)$  matrix $H_D$. Finally, in the  third step, the 
algorithm  requires   $\mathcal{O}(k^2(m+n)  +  mkp)$   operations  to 
evaluate~(\ref{eqn:upd_docs}).  The  complexities of all the updating 
algorithms considered in this paper  are summarized  in   
Tables~\ref{tbl:doc_complex}-\ref{tbl:cw_complex} of  
subsection~\ref{subsec:complexity}. 
 
The above analysis makes it clear   
that the complexity of Algorithm~\ref{alg:doc_zha} scales cubically with respect  
to the update size $p$. When $p$ is small this cubic scaling 
 behavior will have an un-noticeable effect.  
However, it can lead to a substantial slow down for moderate to large $p$. 
In other words, for sufficiently large updates, the  
performance of Algorithm~\ref{alg:doc_zha} 
can be dominated  by  
the SVD of the projected matrix in step 2 and, to a lesser extent,  
by the QR decomposition  
of $(I - U_k U_k^T) D$ in step 1.  
  
In subsection~\ref{subsec:RRview}, we have established the relation between  
the existing updating methods~\cite{Zha.Simon:99} and a projection scheme  
for the singular value problem, which allows us to interpret Algorithm~\ref{alg:doc_zha} as  
SV-RR($A_D$, $U$, $V$) with $U$ and $V$ specified in~(\ref{eqn:MUV_doc}).  
In particular, this finding suggests that the size of the  
potentially critical SVD in step 2 of Algorithm~\ref{alg:doc_zha} is determined by the  
dimensions of the left and right search subspaces. 
 
In this paper, we propose to reduce the dimension of at least one of the two 
search subspaces and perform the projection procedure with respect to the resulting  
smaller subspace(s). For example, in the case of adding new documents, 
we suggest to reduce the dimension of the \textit{left} search subspace $\Span (U)$. 
Based on different options for the dimension reduction,     
we devise new updating schemes that correspond to Algorithm~\ref{alg:sv-rr} 
with the ``reduced'' left search subspaces, $A \equiv A_D$, and $V$ in~(\ref{eqn:MUV_doc}).    
%

%
More precisely, the idea is to replace $U \in \mathbb{R}^{m \times (k+p)}$ in~(\ref{eqn:MUV_doc}) by a matrix  
\begin{equation}\label{eqn:new_U_doc} 
\bar U =  \left[ U_k, \ Z_l \right] \in \mathbb{R}^{m \times (k+l)} , \qquad  
Z_l \in \mathbb{R}^{m \times l}, \qquad l \ll  p,  
\end{equation} 
with a significantly smaller number of columns. 
The matrix $Z_l \in \mathbb{R}^{m \times l}$, to be determined later, is assumed to have orthonormal columns and is  
such that 
\begin{equation}\label{eqn:compl_U_doc} 
\Span (Z_l) \subset \Range \left( (I - U_k U_k^T) D \right) \; . 
\end{equation} 
This implies that 
$Z_l^T U_k = \mathbf{0}$  
and, hence, all the columns of $\bar U$ in~(\ref{eqn:new_U_doc}) are orthonormal. 
Thus, given~(\ref{eqn:new_U_doc}) and (\ref{eqn:compl_U_doc}),  
one can obtain an updating scheme by applying SV-RR($A_D$, $\bar U$, $V$)  
with $V$ defined in~(\ref{eqn:MUV_doc}). 
 Different choices of $Z_l$ will lead to different updating schemes. 
 
The following proposition states the general form of the projected matrix produced 
by~SV-RR($A_D$,~$\bar U$,~$V$).  
 
 
\begin{proposition}\label{prop:H_Z} 
The application SV-RR($A_D$, $\bar U$, $V$) 
of Algorithm~\ref{alg:sv-rr} with $\bar U$ defined  
in~(\ref{eqn:new_U_doc})--(\ref{eqn:compl_U_doc}) and  
$V$ in~(\ref{eqn:MUV_doc}) produces the $(k+l) \times (k+p)$ projected matrix 
\begin{equation}\label{eqn:H_Z} 
H = \left[ 
\begin{array}{cc} 
\Sigma_k    &    U_k^T D \\ 
0           &    Z_l^T (I - U_k U_k^T) D        
\end{array} 
\right] \; . 
\end{equation} 
\end{proposition}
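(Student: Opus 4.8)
The plan is to verify the formula by a direct computation, since by step~1 of Algorithm~\ref{alg:sv-rr} the projected matrix produced by SV-RR($A_D$, $\bar U$, $V$) is by definition $H = \bar U^T A_D V$. First I would substitute the low-rank surrogate $A_D = [A_k \ D]$ with $A_k = U_k \Sigma_k V_k^T$ from~(\ref{eqn:updates_lowrank}), together with $\bar U = [U_k, \ Z_l]$ from~(\ref{eqn:new_U_doc}) and $V$ from~(\ref{eqn:MUV_doc}), and carry out the block multiplication in two stages. I would also note up front that $\bar U$ and $V$ both have orthonormal columns (the former because $Z_l$ is orthonormal and $Z_l^T U_k = \mathbf{0}$), so the SV-RR framework of Section~\ref{sec:RRSVD} indeed applies.

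The first stage is to form $\bar U^T A_D$. Using $U_k^T U_k = I$ and the orthogonality relation $Z_l^T U_k = \mathbf{0}$ — which is precisely the consequence of~(\ref{eqn:compl_U_doc}) already observed in the text, since $\Span(Z_l)$ lies in $\Range\bigl((I-U_kU_k^T)D\bigr)$, a subspace orthogonal to $\Span(U_k)$ — the two block rows of $\bar U^T A_D$ become $[\Sigma_k V_k^T, \ U_k^T D]$ and $[0, \ Z_l^T D]$. The second stage multiplies this on the right by the block-diagonal matrix $V$; invoking $V_k^T V_k = I$ collapses the $(1,1)$ block to $\Sigma_k$ while leaving the $(1,2)$, $(2,1)$, and $(2,2)$ blocks as $U_k^T D$, $0$, and $Z_l^T D$, respectively.

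The last step is cosmetic: rewrite the $(2,2)$ block $Z_l^T D$ as $Z_l^T (I - U_k U_k^T) D$, which is legitimate precisely because $Z_l^T U_k = \mathbf{0}$ forces $Z_l^T U_k U_k^T D = \mathbf{0}$. This matches the matrix displayed in~(\ref{eqn:H_Z}) exactly and completes the argument.

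There is essentially no obstacle here beyond careful bookkeeping of the block structure. The only point that warrants a moment's attention is that the orthogonality $Z_l^T U_k = \mathbf{0}$ gets used twice and for different purposes: once to annihilate the $(2,1)$ block of $\bar U^T A_D$, and then again to justify reinserting the projector $I - U_k U_k^T$ into the $(2,2)$ entry so that the stated form is recovered.
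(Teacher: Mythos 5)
Your proof is correct and follows essentially the same direct-block-multiplication route as the paper; the only cosmetic difference is that you form $\bar U^T A_D$ before applying $V$, whereas the paper forms $A_D V$ first, and both arrive at the same identity via $Z_l^T U_k = \mathbf{0}$.
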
 
 
\begin{proof}  
The statement is verified directly by constructing $H = \bar{U}^T A_D V$.  
Since $A_k = U_k \Sigma_k V_k^T$,  
\[ 
A_D V = \left[A_k, \ D \right]  
\left[ 
\begin{array}{cc} 
 V_k & 0 \\  
 0   & I_p  
\end{array} 
\right] =  
\left[ U_k \Sigma_k, \ D \right] \; . 
\] 
Thus, using~(\ref{eqn:new_U_doc}),  
\[ 
H = \bar U^T (A_D V) = \left[ U_k, \ Z_l \right]^T \left[ U_k \Sigma_k, \ D \right] = 
\left[ 
\begin{array}{cc} 
 \Sigma_k & U_k^T D \\  
 Z^T_l U_k \Sigma_k  & Z^T_l D  
\end{array} 
\right] \;.  
\] 
Since, by~(\ref{eqn:compl_U_doc}), $Z_l^T U_k = \textbf{0}$,  
the $(2,1)$-block of $H$ is zero.  
From~(\ref{eqn:compl_U_doc}), we also note that $Z_l = (I - U_k U_k^T) Z_l$.  
Hence, the $(2,2)$-block equals $Z_l^T (I - U_k U_k^T) D$, 
which completes the proof.  
\end{proof}  
 
A similar idea can be applied for the case of adding new terms. In particular, we 
replace $V \in \mathbb{R}^{n \times (k+p)}$ in~(\ref{eqn:MUV_term}) by an $n$-by-$(k+l)$ matrix  
\begin{equation}\label{eqn:new_V_term} 
\bar V = [V_k, \ Z_l] \in \mathbb{R}^{n \times (k+l)}, \qquad Z_l \in \mathbb{R}^{n \times l}, \qquad l \ll  p, 
\end{equation}  
where $Z_l$ has orthonormal columns and   
\begin{equation}\label{eqn:compl_V_term} 
\Span (Z_l) \subset \Range \left( (I - V_k V_k^T) T^T \right) \; . 
\end{equation} 
Then, by~(\ref{eqn:compl_V_term}),  
$Z_l^T V_k = \mathbf{0}$ 
and, therefore, all the columns of $\bar V$ in~(\ref{eqn:new_V_term}) are orthonormal. 
Thus, an updating scheme can be obtained by applying SV-RR($A_T$, $U$, $\bar V$). 
The corresponding projected matrix is given by the following proposition. 
 
\begin{proposition}\label{prop:H_Z_terms} 
The application of SV-RR($A_T$, $U$, $\bar V$)  
of Algorithm~\ref{alg:sv-rr} with $U$ defined in~(\ref{eqn:MUV_term}) 
and $\bar V$ in~(\ref{eqn:new_V_term})--(\ref{eqn:compl_V_term}) 
produces the $(k+p) \times (k+l)$ projected matrix 
\begin{equation}\label{eqn:H_Z_terms} 
H = \left[ 
\begin{array}{cc} 
\Sigma_k    &    0 \\ 
T V_k       &    T (I - V_k V_k^T) Z_l        
\end{array} 
\right] \; . 
\end{equation} 
\end{proposition}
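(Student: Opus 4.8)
The plan is to mirror exactly the computation used in the proof of Proposition~\ref{prop:H_Z}, but now with the roles of the left and right subspaces swapped. We want to form $H = U^T A_T \bar V$ directly, where $U$ is as in~\eqref{eqn:MUV_term} and $\bar V = [V_k,\ Z_l]$ is as in~\eqref{eqn:new_V_term}.

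First I would compute $U^T A_T$ using the low-rank substitute $A_k = U_k \Sigma_k V_k^T$. Since $A_T = \left[ A_k^T ,\ T^T \right]^T$ (stacking $A_k$ on top of $T$), and $U$ in~\eqref{eqn:MUV_term} is block-diagonal with blocks $U_k^T$ and $I_p$, we get
\[
U^T A_T =
\left[
\begin{array}{cc}
 U_k^T & 0 \\
 0 & I_p
\end{array}
\right]^T
\left[
\begin{array}{c}
 U_k \Sigma_k V_k^T \\
 T
\end{array}
\right] =
\left[
\begin{array}{c}
 \Sigma_k V_k^T \\
 T
\end{array}
\right],
\]
using $U_k^T U_k = I_k$. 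Then I would multiply on the right by $\bar V = [V_k,\ Z_l]$ to obtain the $2\times 2$ block form
\[
H =
\left[
\begin{array}{c}
 \Sigma_k V_k^T \\
 T
\end{array}
\right]
[V_k,\ Z_l] =
\left[
\begin{array}{cc}
 \Sigma_k V_k^T V_k & \Sigma_k V_k^T Z_l \\
 T V_k & T Z_l
\end{array}
\right].
\]

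Next I would simplify the three off-target blocks. The $(1,1)$-block is $\Sigma_k$ since $V_k^T V_k = I_k$. For the $(1,2)$-block, condition~\eqref{eqn:compl_V_term} gives $\Span(Z_l) \subset \Range\left((I - V_k V_k^T) T^T\right)$, which lies in the orthogonal complement of $\Span(V_k)$; hence $V_k^T Z_l = \mathbf{0}$ and the $(1,2)$-block vanishes. For the $(2,2)$-block, the same condition implies $Z_l = (I - V_k V_k^T) Z_l$, so $T Z_l = T (I - V_k V_k^T) Z_l$, giving the claimed entry. The $(2,1)$-block is already $T V_k$. Assembling these yields exactly~\eqref{eqn:H_Z_terms}.

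There is no real obstacle here: the argument is a routine transpose-and-block computation that parallels Proposition~\ref{prop:H_Z} line for line, with $U \leftrightarrow V$, $U_k \leftrightarrow V_k$, $D \leftrightarrow T^T$, and $A_D \leftrightarrow A_T$. The only point requiring a moment of care is keeping track of the stacked (rather than augmented) structure of $A_T$ and confirming that the orthogonality relation $Z_l^T V_k = \mathbf{0}$ follows from~\eqref{eqn:compl_V_term}, which the paragraph preceding the proposition has already noted.
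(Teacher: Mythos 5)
Your computation is correct and is exactly the line-for-line analogue of the paper's proof of Proposition~\ref{prop:H_Z} (the paper states Proposition~\ref{prop:H_Z_terms} without proof, leaving it to the reader as the transposed version). One small notational slip: you write the first factor as $\left[ \begin{smallmatrix} U_k^T & 0 \\ 0 & I_p \end{smallmatrix} \right]^T$, which if read literally equals $U$ rather than $U^T$; the block matrix you display is already $U^T$ and the extra transpose should be dropped (the subsequent arithmetic confirms you meant $U^T A_T$).
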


Finally, if the term weights are corrected, in~(\ref{eqn:MUV_cw}), we substitute $U$ by  
\begin{equation}\label{eqn:new_U_cw} 
\bar U = [U_k, \ Z^{(1)}_{l_1}], \qquad Z^{(1)}_{l_1} \in \mathbb{R}^{m \times l_1}, \qquad l_1 \ll  p, 
\end{equation} 
and $ V $ by 
\begin{equation}\label{eqn:new_V_cw} 
\bar V = [V_k, \ Z^{(2)}_{l_2}], \qquad Z^{(2)}_{l_2} \in \mathbb{R}^{n \times l_2}, \qquad l_2 \ll  p \; . 
\end{equation} 
Similarly, $Z^{(1)}_{l_1} \in \mathbb{R}^{m \times l_1}$  
and $Z^{(2)}_{l_2} \in \mathbb{R}^{n \times l_2}$ are assumed to have orthonormal columns, and 
are such that 
\begin{equation}\label{eqn:compl_UV_cw} 
\Span (Z^{(1)}_{l_1}) \subset \Range \left( (I - U_k U_k^T)C \right), \qquad 
\Span (Z^{(2)}_{l_2}) \subset \Range \left( (I - V_k V_k^T)W^T \right) \; . 
\end{equation} 
The latter implies that  
$Z_{l_1}^{(1)T} U_k = \textbf{0}$ 
and $Z_{l_2}^{(2)T} V_k = \textbf{0}$, 
i.e., both $\bar U$ and $\bar V$ in~(\ref{eqn:new_U_cw})--(\ref{eqn:new_V_cw}) have 
orthonormal columns.   
As a result, an updating scheme can be given by SV-RR($A_{CW}$, $\bar U$, $\bar V$),  
with $\bar U$ and $\bar V$ defined in~(\ref{eqn:new_U_cw})--(\ref{eqn:compl_UV_cw}).  
 
\begin{proposition}\label{prop:H_Z_terms_cw} 
The SV-RR($A_{CW}$, $\bar U$, $\bar V$) run of Algorithm~\ref{alg:sv-rr} with  
$\bar U$ and $\bar V$ defined in~(\ref{eqn:new_U_cw})--(\ref{eqn:compl_UV_cw})  
produces the $(k+l_1) \times (k+l_2)$ projected matrix 
\begin{equation}\label{eqn:H_Z_terms_cw} 
H =  
\left[ 
\begin{array}{cc} 
 \Sigma_k & 0 \\  
 0        & 0  
\end{array} 
\right] + 
\left[ 
\begin{array}{c} 
 U_k^T C  \\  
 Z^{(1)T}_{l_1} (I - U_k U_k^T) C         
\end{array} 
\right]  
\left[  
W V_k, \ W (I - V_k V_k^T) Z^{(2)}_{l_2}  
\right] \; . 
\end{equation} 
\end{proposition}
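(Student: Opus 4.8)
The plan is to verify~\eqref{eqn:H_Z_terms_cw} by a direct computation of $H = \bar U^T A_{CW} \bar V$, exactly paralleling the proof of Proposition~\ref{prop:H_Z}. First I would substitute $A_{CW} = A_k + CW = U_k \Sigma_k V_k^T + CW$ and compute $A_{CW} \bar V$ by applying it to the two block-columns of $\bar V = [V_k, \ Z^{(2)}_{l_2}]$ separately. Using $V_k^T V_k = I$ and $V_k^T Z^{(2)}_{l_2} = 0$ (the latter from~\eqref{eqn:compl_UV_cw}), the term $U_k \Sigma_k V_k^T \bar V$ becomes $[U_k \Sigma_k, \ 0]$, while $CW\bar V = [CWV_k, \ CWZ^{(2)}_{l_2}]$. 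Then I would left-multiply by $\bar U^T = [U_k, \ Z^{(1)}_{l_1}]^T$, again working block by block, and use $Z^{(1)T}_{l_1} U_k = 0$ together with $Z^{(1)}_{l_1} = (I - U_k U_k^T) Z^{(1)}_{l_1}$ and $Z^{(2)}_{l_2} = (I - V_k V_k^T) Z^{(2)}_{l_2}$ (both of which follow from the span inclusions in~\eqref{eqn:compl_UV_cw}, just as noted in Proposition~\ref{prop:H_Z}).

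The key steps, in order, are: (1) split $A_{CW}$ into its low-rank part and its correction part; (2) show the low-rank part contributes only the $(1,1)$-block $\Sigma_k$ after projection, since the orthogonality relations annihilate everything else; (3) show the correction part $CW$ projects to the rank-$p$ product $\left(U_k^T C ;\ Z^{(1)T}_{l_1}(I - U_k U_k^T)C\right)\left(WV_k,\ W(I - V_k V_k^T)Z^{(2)}_{l_2}\right)$; and (4) combine to obtain~\eqref{eqn:H_Z_terms_cw}. Step (3) is where the replacements $Z^{(1)}_{l_1} = (I - U_k U_k^T)Z^{(1)}_{l_1}$ and $Z^{(2)}_{l_2} = (I - V_k V_k^T)Z^{(2)}_{l_2}$ are used to rewrite $Z^{(1)T}_{l_1} C = Z^{(1)T}_{l_1}(I - U_k U_k^T)C$ and $W Z^{(2)}_{l_2} = W(I - V_k V_k^T)Z^{(2)}_{l_2}$, putting the block entries into exactly the stated form.

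I do not anticipate a genuine obstacle here; this is a bookkeeping verification of the same flavor as Proposition~\ref{prop:H_Z}, only with two simultaneous subspace reductions instead of one, and with the rank-one-times-rank-one structure of the correction replaced by its obvious rank-$p$ analogue. The only point requiring the slightest care is keeping the four blocks straight when the correction term is a product of a $(k+l_1)\times p$ matrix with a $p\times(k+l_2)$ matrix, ensuring that the $U_k^T C$ and $WV_k$ pieces land in the top-left, the mixed pieces in the off-diagonal blocks, and so on — but this is immediate once one writes $\bar U^T(CW)\bar V = (\bar U^T C)(W \bar V)$ and expands each factor blockwise. Hence the proof would read essentially: \emph{By direct computation, $H = \bar U^T A_{CW} \bar V = \bar U^T(A_k + CW)\bar V$; the relations $Z^{(1)T}_{l_1} U_k = 0$, $Z^{(2)T}_{l_2} V_k = 0$ reduce $\bar U^T A_k \bar V$ to $\mathrm{diag}\{\Sigma_k, 0\}$, while $\bar U^T C = (U_k^T C;\ Z^{(1)T}_{l_1}(I - U_k U_k^T)C)$ and $W\bar V = (WV_k,\ W(I - V_k V_k^T)Z^{(2)}_{l_2})$ by the same orthogonality and idempotency relations, giving~\eqref{eqn:H_Z_terms_cw}.}
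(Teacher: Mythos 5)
Your proof is correct, and it follows exactly the direct-computation route that the paper uses for Proposition~\ref{prop:H_Z} (the paper simply omits the analogous verification for the weight-correction case). The key identities you invoke — $Z^{(1)T}_{l_1} U_k = 0$, $Z^{(2)T}_{l_2} V_k = 0$, and the idempotency relations $Z^{(1)}_{l_1} = (I - U_k U_k^T) Z^{(1)}_{l_1}$, $Z^{(2)}_{l_2} = (I - V_k V_k^T) Z^{(2)}_{l_2}$ — are precisely what is needed, and the factorization $\bar U^T (CW) \bar V = (\bar U^T C)(W \bar V)$ cleanly produces the outer-product form in~\eqref{eqn:H_Z_terms_cw}.
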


It        is         important        to        emphasize        that, 
in contrast to~Algorithms~\ref{alg:doc_zha}--\ref{alg:cw_zha},    the
updating framework     introduced 
above, which uses  smaller search  subspaces, is  no 
longer  expected to  deliver  the exact  singular  triplets of  $A_D$, 
$A_T$,  or  $A_{CW}$.   However,   as  the  numerical  experiments  in 
section~\ref{sec:num}  will   demonstrate,   satisfactory  retrieval 
accuracies can be achieved without computing these triplets exactly, and 
only  approximations  suffice in  practice.  This  observation can  be 
related to the  fact that the matrices $A_D$,  $A_T$, and $A_{CW}$ are 
themselves  approximations of  the ``true''  updated  matrices $\tilde 
A_D$,       $\tilde        A_T$,       and       $\tilde       A_{CW}$ 
in~(\ref{eqn:update_doc})--(\ref{eqn:update_cw}), and so there is no 
need to compute their singular triplets with high accuracy.  
 
The rest of this section studies several choices of $Z_l$  
in~(\ref{eqn:new_U_doc})--(\ref{eqn:compl_U_doc}) and~(\ref{eqn:new_V_term})--(\ref{eqn:compl_V_term}),  
as well as of $Z^{(1)}_{l_1}$ and $Z^{(2)}_{l_2}$ in~(\ref{eqn:new_U_cw})--(\ref{eqn:compl_UV_cw}).

\subsection{Optimal rank-$l$ approximations.} 
 
We start with the approach based on SV-RR($A_D$, $\bar U$, $V$),  
where $\bar U$ is assumed to satisfy~(\ref{eqn:new_U_doc})--(\ref{eqn:compl_U_doc})  
and $V$ is defined in~(\ref{eqn:MUV_doc}).  
Our goal is to specify a suitable choice of $Z_l$ and turn the general projection procedure  
into a practical updating algorithm that handles the case of adding new documents.  
 
As has been pointed out in subsection~\ref{subsec:RRview},  
a run of SV-RR($A_D$, $U$, $V$) 
with $U$ and $V$ from~(\ref{eqn:MUV_doc}) is equivalent to Algorithm~\ref{alg:doc_zha}, which 
is known to compute the exact dominant singular triplets of $A_D$. This property of the algorithm,  
paired with the analysis in~\cite{Zha.Zhang:00}, explains the generally satisfactory retrieval  
quality maintained by the updating scheme. From these considerations, it is desirable that the new search 
subspace $\Span(\bar U)$ utilized by SV-RR($A_D$, $\bar U$, $V$) approximates $\Span( U )$.  
In this case, our expectation is that SV-RR($A_D$, $\bar U$, $V$)  
will exhibit a retrieval accuracy comparable to that 
of SV-RR($A_D$, $U$, $V$) implemented by  
Algorithm~\ref{alg:doc_zha}.  
 
By definition, $\Span (\bar U) = \Span (U_k) \oplus \Span (Z_l)$ 
and $\Span (U) = \Span (U_k) \oplus \Range \left( (I - U_k U_k^T)D \right)$.  
Therefore, we seek to construct $Z_l$ such that $\Span (Z_l)$ approximates  
$\Range ( (I - U_k U_k^T) D )$. In other words, we would like to  
compress the information contained in 
the subspace $\Range\left( (I - U_k U_k^T) D \right)$ into the span of 
a set of  $l$ orthonormal vectors.  
Formally, this requirement translates into  
the problem of constructing $Z_l$, such that 
\begin{equation}\label{eqn:approx} 
\Span  (Z_l) = \Range ( M ),  
\end{equation}  
where $M \approx \left( I - U_k U_k^T \right) D \in \mathbb{R}^{m \times p}$  
is some matrix with $\mbox{rank}(M) = l$. 
 
This is related to the standard task of constructing a low-rank  
approximation $M$ of $(I - U_k U_k^T) D$. It is well known, e.g.,~\cite{GVL-book},  
that an \textit{optimal} rank-$l$ approximation of $(I - U_k U_k^T) D$ is given by $M = X_l S_l Y_l^T$,  
where $S_l$ is a diagonal matrix of $l$ dominant singular values of $(I - U_k U_k^T) D$, and 
$X_l \in \mathbb{R}^{m \times l}$ and $Y_l \in \mathbb{R}^{p \times l}$ are the matrices of the corresponding  
left and right singular vectors, i.e.,  
\begin{equation}\label{eqn:svd_doc} 
(I - U_k U_k^T) D   Y_l   =   X_l S_l, \qquad D^T (I - U_k U_k^T)^T  X_l   =   Y_l S_l \;. 
\end{equation} 
Thus, it is natural to choose $Z_l = X_l$, i.e., augment $\bar U$ in~(\ref{eqn:new_U_doc})  
with a few left dominant singular vectors of $(I - U_k U_k^T) D$. It is clear that this choice  
of $Z_l$ indeed  
satisfies~(\ref{eqn:compl_U_doc}).  
Then, by Proposition~\ref{prop:H_Z} and~(\ref{eqn:svd_doc}), a run  
of SV-RR($A_D$, $\bar U$, $V$) 
with $\bar{U} = \left[ U_k, \ X_l \right]$ and $V$ in~(\ref{eqn:MUV_doc}) 
produces the projected matrix 
\begin{equation}\label{eqn:H_sv} 
H = \left[ 
\begin{array}{cc} 
\Sigma_k    &    U_k^T D \\ 
0           &    S_l Y^T_l        
\end{array} 
\right] \in \mathbb{R}^{(k+l) \times (k+p)} \; . 
\end{equation} 
This leads to the following updating scheme based on the singular vectors (SV) of $(I - U_k U_k^T) D$.  
 
\begin{algorithm}[Adding documents (SV)]\label{alg:doc_sv} 
\emph{Input:} $\Sigma_k$, $U_k$, $V_k$, $D$. \emph{Output:} $\tilde \Sigma_k$, $\tilde U_k$, $\tilde V_k$. 
\begin{enumerate}  
\item \emph{Compute} $l$ \emph{largest singular triplets} of $(I - U_k U_k^T) D$ 
\emph{in}~(\ref{eqn:svd_doc}).  
\item \emph{Construct} $H$ \emph{in}~(\ref{eqn:H_sv}).  
\emph{Compute the matrices}  
$\Theta_k$, $F_k$, \emph{and} $G_k$ \emph{that correspond to the} $k$ \emph{dominant singular triplets of} $H$. 
\item \emph{Set} 
$ 
\tilde \Sigma_k = \Theta_k, \ \tilde U_k = [U_k, \ X_l] F_k, \ \emph{and} \ \tilde V_k =  
\left[ 
\begin{array}{cc} 
 V_k & 0 \\  
 0   & I_p  
\end{array} 
\right] G_k$. 
\end{enumerate} 
\end{algorithm} 
\vspace{0.1in} 
 
Observe that the projected matrix produced by Algorithm~\ref{alg:doc_sv} has 
significantly fewer rows than the one in Algorithm~{\ref{alg:doc_zha}},  
bringing the $\mathcal{O}((k+p)^3)$ complexity of step 2 to $\mathcal{O}((k+p)(k+l)^2)$, 
which is linear in $p$. From a practical point of view, note also 
that, in contrast to Algorithm~\ref{alg:doc_zha},  
$(I - U_k U_k^T) D$ in step 1 should not be explicitly constructed,  
because the factorization~(\ref{eqn:svd_doc}) can be obtained by an iterative  
procedure, e.g.,~\cite{SVDPACK, Hernandez.Roman.Tomas:08}, 
that accesses $(I - U_k U_k^T)D$ (and its transpose) through matrix-vector multiplications with $D$ (and $D^T$)  
and orthogonalizations against the columns of $U_k$. The triplets $S_l$, $X_l$, and $Y_l$ 
need to  be approximated only with modest accuracy.  
 
Similar considerations can be exploited for the case of adding new terms with the 
aim of constructing $Z_l$ in~(\ref{eqn:new_V_term})--(\ref{eqn:compl_V_term}) 
satisfying~(\ref{eqn:approx}) with $M \approx \left( I - V_k V_k^T \right) T^T \in \mathbb{R}^{n \times p}$  
and $\mbox{rank}(M) = l$. 
Let $S_l$, $X_l \in \mathbb{R}^{n \times l}$, and $Y_l \in \mathbb{R}^{p \times l}$  
now be the factors  of the rank-$l$ SVD approximation 
of the matrix $(I - V_k V_k^T) T^T$, i.e., 
%
\begin{equation}\label{eqn:svd_term} 
(I - V_k V_k^T) T^T  Y_l   =   X_l S_l, \quad T (I - V_k V_k^T)  X_l   =   Y_l S_l \; . 
\end{equation} 
An optimal rank-$l$ approximation of $(I - V_k V_k^T) T^T$ is then given by $M = X_l S_l Y_l^T$, 
and analogy with the previous case suggests that we choose $Z_l = X_l$. 
Thus, by Proposition~\ref{prop:H_Z_terms} and~(\ref{eqn:svd_term}), 
a run of SV-RR($A_T$, $U$, $\bar V$) with $U$ defined in~(\ref{eqn:MUV_term}) and $\bar V = [V_k, \ X_l]$  
gives the projected matrix 
\begin{equation}\label{eqn:H_sv_terms} 
H = \left[ 
\begin{array}{cc} 
\Sigma_k    &    0 \\ 
T V_k           &    Y_l S_l        
\end{array} 
\right] \in \mathbb{R}^{(k+p) \times (k+l)} \; . 
\end{equation} 
As a result, we obtain the following updating scheme. 
\begin{algorithm}[Adding terms (SV)]\label{alg:term_sv} 
\emph{Input:} $\Sigma_k$, $U_k$, $V_k$, $T$. \emph{Output:} $\tilde \Sigma_k$, $\tilde U_k$, $\tilde V_k$. 
\begin{enumerate}  
\item \emph{Compute} $l$ \emph{largest singular triplets} of $(I - V_k V_k^T) T^T$  
\emph{in}~(\ref{eqn:svd_term}). 
\item \emph{Construct} $H$ \emph{in}~(\ref{eqn:H_sv_terms}). 
\emph{Compute the matrices} $\Theta_k$, $F_k$, \emph{and} $G_k$ \emph{that correspond to the} $k$  
\emph{dominant singular triplets of} $H$. 
\item \emph{Set} 
$ 
\tilde \Sigma_k = \Theta_k, \ \tilde U_k =  
\left[ 
\begin{array}{cc} 
 U_k & 0 \\  
 0   & I_p  
\end{array} 
\right] F_k$, \emph{and} $\tilde V_k = [V_k, \ X_l] G_k$ . 
\end{enumerate} 
\end{algorithm} 
\vspace{0.1in}

Finally, in the case of correcting the term weights, we would like to specify  
$Z^{(1)}_{l_1}$ and $Z^{(2)}_{l_2}$ in~(\ref{eqn:new_U_cw})--(\ref{eqn:compl_UV_cw}),  
such that 
\begin{equation}\label{eqn:approx1} 
\Span  (Z^{(1)}_{l_1}) = \Range ( M_1 ), \quad \Span  (Z^{(2)}_{l_2}) = \Range ( M_2 ) \; ,  
\end{equation}  
where $M_1 \approx \left( I - U_k U_k^T \right) C \in \mathbb{R}^{m \times p}$,  
$M_2 \approx \left( I - V_k V_k^T \right) W^T \in \mathbb{R}^{n \times p}$; 
$\mbox{rank}(M_1) = l_1$ and $\mbox{rank}(M_2) = l_2$.  
By analogy with the previous cases, we let $S^{(1)}_{l_1}$, $X^{(1)}_{l_1} \in \mathbb{R}^{m \times l_1}$,  
$Y^{(1)}_{l_1} \in \mathbb{R}^{p \times l_1}$ be the matrices of the $l_1$ largest singular values and  
associated left and right singular vectors of $(I - U_k U_k^T) C$, 
\begin{equation}\label{eqn:svd_cw1} 
(I - U_k U_k^T) C  Y^{(1)}_{l_1}   =   X^{(1)}_{l_1} S^{(1)}_{l_1}, \quad 
C^T (I - U_k U_k^T) X^{(1)}_{l_1}   =   Y^{(1)}_{l_1} S^{(1)}_{l_1} \; ,  
\end{equation} 
and $S^{(2)}_{l_2}$, $X^{(2)}_{l_2} \in \mathbb{R}^{n \times l_2}$, $Y^{(2)}_{l_2} \in \mathbb{R}^{p \times l_2}$  
correspond to the $l_2$ dominant triplets of $(I - V_k V_k^T) W^T$, 
\begin{equation}\label{eqn:svd_cw2} 
(I - V_k V_k^T) W^T  Y^{(2)}_{l_2}   =   X^{(2)}_{l_2} S^{(2)}_{l_2}, \quad 
W (I - V_k V_k^T) X^{(2)}_{l_2}  =   Y^{(2)}_{l_2} S^{(2)}_{l_2} \;. 
\end{equation} 
Optimal low-rank approximations of $(I - U_k U_k^T) C$ and $(I - V_k V_k^T) W^T$ 
are then given by $M_1 = X^{(1)}_{l_1} S_{l_1}^{(1)} Y_{l_1}^{(1)T}$ 
and $M_2 = X^{(2)}_{l_2} S_{l_2}^{(2)} Y_{l_2}^{(2)T}$, respectively. 
Hence, we choose $Z^{(1)}_{l_1} = X^{(1)}_{l_1}$ and $Z^{(2)}_{l_2} = X^{(2)}_{l_2}$. 
In this case, by Proposition~\ref{prop:H_Z_terms_cw} and~(\ref{eqn:svd_cw1})--(\ref{eqn:svd_cw2}),  
a run of SV-RR($A_{CW}$, $\bar U$, $\bar V$) with $\bar U = [U_k, \ X^{(1)}_{l_1}]$ and 
$\bar V = [V_k, \ X^{(2)}_{l_2}]$ 
yields the projected matrix 
\begin{equation}\label{eqn:H_sv_cw} 
H =  
\left[ 
\begin{array}{cc} 
 \Sigma_k & 0 \\  
 0        & 0  
\end{array} 
\right] + 
\left[ 
\begin{array}{c} 
 U_k^T C  \\  
 S^{(1)}_{l_1} Y^{(1)T}_{l_1}    
\end{array} 
\right]  
\left[  
W V_k, \ Y^{(2)}_{l_2} S^{(2)}_{l_2}  
\right] \in \mathbb{R}^{(k+l_1) \times (k+l_2)} \;, 
\end{equation} 
and can be summarized as the following updating algorithm. 
%
\begin{algorithm}[Correcting weights (SV)]\label{alg:cw_sv} 
\emph{Input:} $\Sigma_k$, $U_k$, $V_k$, $C$, $W$.  
\emph{Output:} $\tilde \Sigma_k$, $\tilde U_k$, $\tilde V_k$. 
\begin{enumerate}  
\item \emph{Compute} $l_1$ \emph{largest singular triplets of} $(I - U_k U_k^T) C$  
\emph{in}~(\ref{eqn:svd_cw1}) \emph{and} $l_2$  
\emph{largest singular triplets of} $(I - V_k V_k^T) W^T$~\emph{in}~(\ref{eqn:svd_cw2}) . 
\item \emph{Construct} $H$ \emph{in}~(\ref{eqn:H_sv_cw}). 
\emph{Compute the matrices} $\Theta_k$, $F_k$, \emph{and} $G_k$ \emph{that correspond to the} $k$ \emph{dominant singular  
triplets of} $H$. 
\item \emph{Set} 
$\tilde \Sigma_k = \Theta_k$,  
$\tilde U_k = [U_k, \ X^{(1)}_{l_1}] F_k$, \emph{and} $\tilde V_k = [V_k, \ X^{(2)}_{l_2}] G_k$ . 
\end{enumerate} 
\end{algorithm}

\subsection{The Golub--Kahan--Lanczos vectors}\label{subsec:gkl} 
 
In this subsection, we consider an alternative option for choosing the orthonormal 
vectors $Z_l$, $Z^{(1)}_{l_1}$, and $Z^{(2)}_{l_2}$.  
Instead of using the dominant singular triplets of the four respective matrices $(I - U_k U_k^T) D$,  
$(I - V_k V_k^T) T^T$, $(I - U_k U_k^T) C$, and $(I - V_k V_k^T) W^T$, under consideration,  
we propose to exploit for the same purpose 
a few basis vectors computed from  the Golub--Kahan--Lanczos 
(GKL) bidiagonalization procedure~\cite{Golub.Kahan:65, GVL-book}.   
 
Given an arbitrary $m$-by-$n$ matrix $A$, the GKL procedure constructs the orthonormal bases  
$P_l = [p_1, \ldots , p_l ]$ and $Q_{l+1} = [q_1, \ldots, q_{l+1}]$  
of the \textit{Krylov subspaces}  
$\mathcal{K}_{l}(AA^T,Aq_1) = $  
$\Span \left\{ A q_1, (A A^T)Aq_1, \ldots (A A^T)^{l-1} A q_1 \right\} \; 
$  
and 
$\mathcal{K}_{l+1}(A^T A, q_1) =$ \\  
$\Span \left\{ q_1, (A^T A)q_1, \ldots (A^T A)^{l} q_1 \right\} \;$, 
respectively; and an (upper) bidiagonal matrix $\underbar B_{l} \in \IR^{l \times (l+1)}$.   
The matrices $P_l$, $Q_{l+1}$, and $\underbar B_{l}$ are related by the fundamental  
identity 
\begin{equation}\label{eqn:GKL} 
\left\{ 
\begin{array}{ccc} 
A Q_l & = & P_l B_l \; , \\ 
A^{T} P_l & = & Q_{l+1} \underbar{B}^{T}_l \; ; 
\end{array} 
\right. 
\end{equation} 
where $B_l \in \mathbb{R}^{l \times l}$ is obtained from $\underbar {B}_l$ by removing the last column. 
The vectors $p_i$ and $q_i$ that comprise the matrices $P_l$ and $Q_l$ are called the \textit{left 
and right GKL vectors}, respectively.  
An implementation of the procedure is given by the algorithm below,  
which we further refer to as GKL($A$, $l$). 
\begin{algorithm}[GKL($A$, $l$)]\label{alg:GKL} 
\emph{Input:} $A \in \mathbb{R}^{m \times n}$, $l$.  
\emph{Output:} $\underbar{B}_l$, $P_l$, $Q_{l+1}$. 
\begin{enumerate}  
\item \emph{Choose} $q_1$, $\| q_1 \| = 1$. \emph{Set} $\beta_0 = 0$ . 
\item  \emph{\textbf{For}} $i = 1, \ldots, l$ \emph{\textbf{do}}  
  \item \hspace{0.2in}  
         $p_i = Aq_i - \beta_{i-1} p_{i-1}$ 
  \item  \hspace{0.2in}  
      \emph{\textbf{If}} $(m < n)$ \emph{Orthogonalize} $p_{i}$ \emph{against} $[p_1, \ldots, p_{i-1}]$. 
  \item  \hspace{0.2in}  
        $\alpha_i = \|p_i\|$. 
  \item  \hspace{0.2in}  
        $p_i = p_i / \alpha_i$. 
  \item  \hspace{0.2in}  
        $q_{i+1} = A^T p_i - \alpha_i q_i$. 
  \item  \hspace{0.2in}  
      \emph{\textbf{If}} $(m \geq n)$ \emph{Orthogonalize} $q_{i+1}$ \emph{against} $[q_1, \ldots, q_{i}]$. 
  \item  \hspace{0.2in}  
        $\beta_i = \|q_{i+1}\|$. 
  \item  \hspace{0.2in}  
        $q_{i+1} = q_{i+1} / \beta_i$. 
\item \emph{\textbf{EndFor}} 
\item \emph{Return}  
$\underbar{B}_l = \text{\emph{diag}}\left\{\alpha_1, \ldots, \alpha_l\right\} + \text{\emph{superdiag}}\left\{\beta_1, \ldots, \beta_l \right\}$,  
$P_l = \left[ p_1, \ldots, p_l \right] $, \emph{and}  
$Q_{l+1} = [q_1, \ldots, q_{l+1}]$.  
\end{enumerate} 
\end{algorithm} 
\vspace{0.1in} 
%


Note that, in exact arithmetic, steps 4 and 8 of the algorithm  
are unnecessary,  
i.e., orthogonality of vectors $p_i$ and $q_i$ is ensured solely  
by the short-term recurrences in steps 3 and 6.  
In the presence of the round-off, it is generally required to 
reorthogonalize both $p_i$ and $q_i$ against all previously constructed vectors.  
However, in Algorithm~\ref{alg:GKL} we follow the observation  
in~\cite{Simon.Zha:00} 
suggesting that it is possible to reorthogonalize only one of the  
vector sets without significant loss of orthogonality for the other. 
 
Let us first address the case of adding new documents, and assume that  
$P_l \in \mathbb{R}^{m \times l}$, $Q_{l+1} \in \mathbb{R}^{p \times (l+1)}$,  
and $\underbar{B}_l$ are produced by a run of GKL($(I - U_k U_k^T) D$, $l$).  
Then as a rank-$l$ approximation of $(I - U_k U_k^T) D$ we choose 
$M = P_l \underbar{B}_l Q_{l+1}^T$ and set $Z_l = P_l$, 
i.e., we define $\bar U$  in (\ref{eqn:new_U_doc}) by 
augmenting $U_k$ with several left GKL vectors of $(I - U_k U_k^T) D$.  
 
In contrast to the case of the singular vectors, this choice does not provide  
an optimal rank-$l$ approximation. The optimality, however, is traded 
for a simpler and faster procedure to construct $Z_l$.  
Evaluation of the performance of the 
GKL vectors as opposed to the singular vectors in the context 
of the dimension reduction  
can be found in~\cite{ChenSaad:FiltLan}.

By~(\ref{eqn:GKL}), we have  
\begin{equation}\label{eqn:gkl_doc} 
\left\{ 
\begin{array}{lll} 
(I - U_k U_k^T) D Q_l &  =  & P_l B_l \\ 
D^T (I - U_k U_k^T) P_l &  =  & Q_{l+1} \underbar{B}_l^T \;. 
\end{array} 
\right. 
\end{equation} 
It can be seen from the above relation that $Z_l = P_l$ satisfies~(\ref{eqn:compl_U_doc}). 
Thus, by Proposition~\ref{prop:H_Z}, the projected matrix produced by an application of  
SVD-RR($A_D$, $\bar U$, $V$), where $\bar U = [U_k, \ P_l]$ and $V$ is defined in~(\ref{eqn:MUV_doc}),  
takes the form  
\begin{equation}\label{eqn:H_gkl} 
H = \left[ 
\begin{array}{cc} 
\Sigma_k    &    U_k^T D \\ 
0           &    \underbar{B}_l Q^T_{l+1}        
\end{array} 
\right] \in \mathbb{R}^{(k+l) \times (k+p)} \; . 
\end{equation} 
%
%
As a result,  
we obtain the following updating algorithm. 
 
\begin{algorithm}[Adding documents (GKL)]\label{alg:doc_gkl} 
\emph{Input:} $\Sigma_k$, $U_k$, $V_k$, $D$. \emph{Output:} $\tilde \Sigma_k$, $\tilde U_k$, $\tilde V_k$. 
\begin{enumerate}  
\item \emph{Apply GKL($(I - U_k U_k^T) D$, $l$)}, \emph{given by Algorithm~\ref{alg:GKL}},  
\emph{to produce} $P_l$, $Q_{l+1}$, \emph{and} $\underbar{B}_l$ \emph{satisfying}~(\ref{eqn:gkl_doc}). 
\item \emph{Construct} $H$ \emph{in}~(\ref{eqn:H_gkl}). 
\emph{Compute the matrices} $\Theta_k$, $F_k$, \emph{and} $G_k$ \emph{that correspond to the} $k$  
\emph{dominant singular triplets of} $H$. 
\item \emph{Set} 
$ 
\tilde \Sigma_k = \Theta_k, \ \tilde U_k = [U_k, \ P_l] F_k, \ \emph{and} \ \tilde V_k =  
\left[ 
\begin{array}{cc} 
 V_k & 0 \\  
 0   & I_p  
\end{array} 
\right] G_k$. 
\end{enumerate} 
\end{algorithm} 
\vspace{0.1in} 
 
Note that  applying the GKL procedure in step 1 of Algorithm~\ref{alg:doc_gkl} can be carried out  
without explicitly constructing the matrix $(I - U_k U_k^{T})D$ and its transpose. Instead, 
the matrices can be accessed through matrix-vector multiplications with $D$ or $D^T$, and  
orthogonalizations against the columns of $U_k$. 
 
If  new terms $T$ are added to the document collection, then $l$ steps of the GKL procedure 
should be applied to the matrix $(I - V_k V_k^T) T^T$. This leads to the identity  
\begin{equation}\label{eqn:gkl_term} 
\left\{ 
\begin{array}{lll} 
(I - V_k V_k^T) T^T Q_l &  =  & P_l B_l \\ 
T (I - V_k V_k^T) P_l &  =  & Q_{l+1} \underbar{B}_l^T \;, 
\end{array} 
\right. 
\end{equation} 
where $P_l \in \mathbb{R}^{n \times l}$ and $Q_{l+1} \in \mathbb{R}^{p \times (l+1)}$  
are the matrices of the left and right GKL vectors 
of $(I - V_k V_k^T)T^T$. Similar to the previous case, we approximate 
$(I - V_k V_k^T)T^T$ by $M = P_l \underbar{B}_l Q_{l+1}^T$, and set $Z_l = P_l$. 
Then, combining Proposition~\ref{prop:H_Z_terms} and~(\ref{eqn:gkl_term}), 
we obtain the projected matrix 
\begin{equation}\label{eqn:H_gkl_term} 
H = \left[ 
\begin{array}{cc} 
\Sigma_k    &    0 \\ 
T V_k       &    Q_{l+1} \underbar{B}_l^T         
\end{array} 
\right] \in \mathbb{R}^{(k+p) \times (k+l)} \; , 
\end{equation} 
which is produced by SVD-RR($A_T$, $U$, $\bar V$) with 
$U$ in~(\ref{eqn:MUV_term}) and $\bar V = [V_k, \ P_l]$.  
The resulting updating scheme is summarized in the algorithm below. 
 
\begin{algorithm}[Adding terms (GKL)]\label{alg:term_gkl} 
\emph{Input:} $\Sigma_k$, $U_k$, $V_k$, $D$. \emph{Output:} $\tilde \Sigma_k$, $\tilde U_k$, $\tilde V_k$. 
\begin{enumerate}  
\item \emph{Apply GKL($(I - V_k V_k^T) T^T$, $l$)}, \emph{given by Algorithm~\ref{alg:GKL}},  
\emph{to produce} $P_l$, $Q_{l+1}$, \emph{and} $\underbar{B}_l$ \emph{satisfying}~(\ref{eqn:gkl_term}). 
\item \emph{Construct} $H$ \emph{in}~(\ref{eqn:H_gkl_term}). 
\emph{Compute the matrices} $\Theta_k$, $F_k$, \emph{and} $G_k$ \emph{that correspond to the} $k$  
\emph{dominant singular triplets of} $H$. 
\item \emph{Set} 
$ 
\tilde \Sigma_k = \Theta_k, \ \tilde U_k =  
\left[ 
\begin{array}{cc} 
 U_k & 0 \\  
 0   & I_p  
\end{array} 
\right] F_k$, \emph{and} $\tilde V_k = [V_k, \ P_l] G_k$ . 
\end{enumerate} 
\end{algorithm} 
\vspace{0.1in}

Finally, if the term weights are corrected, the GKL procedure is applied to both  
$(I - U_k U_k^T) C$ and $(I - V_k V_k^T) W$. A run of GKL($(I - U_k U_k^T) C$, $l_1$) 
gives the equality   
\begin{equation}\label{eqn:gkl_cw1} 
\left\{ 
\begin{array}{lll} 
(I - U_k U_k^T) C Q_{l_1}^{(1)} &  =  & P_{l_1}^{(1)} B_{l_1}^{(1)} \\ 
C^T (I - U_k U_k^T) P_{l_1}^{(1)} &  =  & Q_{l_1+1}^{(1)} \underbar{B}_{l_1}^{(1)T} \;, 
\end{array} 
\right. 
\end{equation} 
and GKL($(I - V_k V_k^T) W$, $l_2$) results in  
\begin{equation}\label{eqn:gkl_cw2} 
\left\{ 
\begin{array}{lll} 
(I - V_k V_k^T) W^T Q_{l_2}^{(2)} &  =  & P_{l_2}^{(2)} B_{l_2}^{(2)} \\ 
W (I - V_k V_k^T) P_{l_2}^{(2)} &  =  & Q_{l_2+1}^{(2)} \underbar{B}_{l_2}^{(2)T} \;. 
\end{array} 
\right. 
\end{equation} 
In~(\ref{eqn:gkl_cw1})--(\ref{eqn:gkl_cw2}), the columns of the matrix pairs  
$P^{(1)}_{l_1} \in \mathbb{R}^{m \times l_1}$, $Q^{(1)}_{l_1 + 1} \in \mathbb{R}^{p \times (l_1 + 1)}$  
and $P^{(2)}_{l_2} \in \mathbb{R}^{n \times l_2}$, $Q^{(2)}_{l_2 + 1} \in \mathbb{R}^{p \times (l_2 + 1)}$  
correspond to the left and right GKL vectors of $(I - U_k U_k^T) C$ and $(I - V_k V_k^T) W$, 
respectively. We then can approximate $(I - U_k U_k^T) C$ by  
$M_1 = P^{(1)}_{l_1} \underbar{B}^{(1)}_{l_1} Q_{l_1+1}^{(1)T}$ and $(I - V_k V_k^T) W$   
by $M_2 = P^{(2)}_{l_2} \underbar{B}^{(2)}_{l_2} Q_{l_2+1}^{(2)T}$, choosing  
$Z^{(1)}_{l_1} = P^{(1)}_{l_1}$ and $Z^{(2}_{l_2} = P^{(2)}_{l_2}$. Hence, 
by Proposition~\ref{prop:H_Z_terms_cw} and~(\ref{eqn:gkl_cw1})--(\ref{eqn:gkl_cw2}), 
a run of SVD-RR($A_{CW}$, $\bar{U}$, $\bar{V}$) with $\bar U = [U_k, \ P^{(1)}_{l_1}]$ 
and $\bar V = [V_k, \ P^{(2)}_{l_2}]$ yields the projected matrix  
\begin{equation}\label{eqn:H_gkl_cw} 
H =  
\left[ 
\begin{array}{cc} 
 \Sigma_k & 0 \\  
 0        & 0  
\end{array} 
\right] + 
\left[ 
\begin{array}{c} 
 U_k^T C  \\  
 \underbar{B}_{l_1}^{(1)} Q_{l_1+1}^{(1)T}   
\end{array} 
\right]  
\left[  
W V_k, \ Q_{l_2+1}^{(2)} \underbar{B}_{l_2}^{(2)T} 
\right] \in \mathbb{R}^{(k+l_1) \times (k+l_2)} \;. 
\end{equation} 
This suggests the following updating scheme. 
\begin{algorithm}[Correcting weights (GKL)]\label{alg:cw_gkl} 
\emph{Input:} $\Sigma_k$, $U_k$, $V_k$, $D$. \emph{Output:} $\tilde \Sigma_k$, $\tilde U_k$, $\tilde V_k$. 
\begin{enumerate}  
\item \emph{Apply GKL($(I - U_k U_k^T) C$, $l_1$) and GKL($(I - V_k V_k^T) W^T$, $l_2$)},  
\emph{given by Algorithm~\ref{alg:GKL}},  
\emph{to produce the GKL vectors} \emph{satisfying}~(\ref{eqn:gkl_cw1}) \emph{and} (\ref{eqn:gkl_cw2}). 
\item \emph{Construct} $H$ \emph{in}~(\ref{eqn:H_gkl_cw}). 
\emph{Compute the matrices} $\Theta_k$, $F_k$, \emph{and} $G_k$ \emph{that correspond to the} $k$  
\emph{dominant singular triplets of} $H$. 
\item \emph{Set} 
$ 
\tilde \Sigma_k = \Theta_k, \ \tilde U_k = [U_k, \ P_{l_1}^{(1)}] F_k, \ \emph{and} \ 
V_k = [V_k, \ P_{l_2}^{(2)}] G_k.  
$ 
\end{enumerate} 
\end{algorithm} 
\vspace{0.1in} 
 

\subsection{Complexity analysis}\label{subsec:complexity}

We start by addressing the cost of Algorithm~\ref{alg:doc_sv} that performs the  
updating  after adding new documents. 
To be specific, 
we assume that a multiple of $l$ iterations of a GKL-based singular value solver,  
e.g.,~\cite{Hernandez.Roman.Tomas:08}, are used to determine the $l$ dominant singular triplets of $(I - U_k U_k^T) D$.  
In this case, the cost of step 1 of the algorithm can be estimated as  
$\mathcal{O} (l  \NNZ (D) + lmk + l^2 (m + p))$, where $\NNZ(D)$ is the number of non-zero elements in $D$.  
Here the first two terms come from the matrix-vector multiplications with $(I - U_k U_k^T) D$.  
In particular, the first term is given by the Sparse Matrix-Vector multiplications (SpMVs) with $D$  
and $D^T$, and the second term is contributed by orthogonalizations against the columns of $U_k$. 
The last term accounts for reorthogonalizations of the GKL vectors and the final SV-RR procedure to  
extract the singular triplet approximations from the corresponding Krylov subspaces.  
 
The complexity of step 2 of Algorithm~\ref{alg:doc_sv} is $\mathcal{O}((k+p)(k+l)^2 + \NNZ(D) k + lp)$. 
The first term represents the cost of the SVD of the matrix $H$ in~(\ref{eqn:H_sv}). 
The remaining terms are given by the construction of $U_k^T D$  
(utilizing SpMVs with $D$) and the multiplication of $Y_l^T$ by the diagonal matrix $S_l$  
to form the $(1,2)$ and $(2,2)$ blocks of $H$, respectively.  
The cost of step 3 is $\mathcal{O}(k^2 (m + n) + lmk)$. 
 
The complexity of Algorithm~\ref{alg:doc_gkl} that also addresses the case of adding new 
documents and extends Algorithm~\ref{alg:doc_sv} by  
replacing the singular vectors of $(I - U_k U_k^T) D$ with the GKL vectors is similarly estimated. 
The difference in cost of the two schemes essentially comes from their initial step.  
In particular, the complexity of step 1 of Algorithm~\ref{alg:doc_gkl} is $\mathcal{O}(l  \NNZ (D) + lmk + l^2 p)$,  
i.e., it requires $\mathcal{O}(l^2 m)$ less operations than the corresponding step of Algorithm~\ref{alg:doc_sv}.  
This cost reduction is due to avoiding the SV-RR procedure that is invoked by a GKL-based singular value solver in 
step 1 of Algorithm~\ref{alg:doc_sv}.  
 
The overall complexities of Algorithms~\ref{alg:doc_sv} and~\ref{alg:doc_gkl}, 
as well as of Algorithm~\ref{alg:doc_zha} addressed at the beginning of Section~\ref{sec:new_updates},  
are summarized in Table~\ref{tbl:doc_complex}.  
%
 
{\small 
\begin{table} 
\begin{centering} 
\begin{tabular}{|c|c|} 
\hline  
Algorithm & Complexity\tabularnewline 
\hline  
\hline  
\multicolumn{1}{|l|}{Alg.~\ref{alg:doc_zha} (ZS)} & $(k+p)^{3}+mp{}^{2}+mpk+k^{2}(m+n)$  
\tabularnewline 
\hline  
\multicolumn{1}{|l|}{Alg.~\ref{alg:doc_sv} (SV)} & $(k+p)(k+l)^{2}+ml^{2}+mlk+k^{2}(m+n)+\NNZ(D)(l+ k)$\tabularnewline 
\hline  
\multicolumn{1}{|l|}{Alg.~\ref{alg:doc_gkl} (GKL)} & $(k+p)(k+l)^{2}+mlk+k^{2}(m+n)+\NNZ(D)(l + k)$ 
\tabularnewline 
\hline  
\end{tabular} 
\par\end{centering} 
\caption{Asymptotic complexity of different LSI updating schemes for adding new documents.} 
\label{tbl:doc_complex} 
\end{table} 
} 
The main observation drawn from Tables~\ref{tbl:doc_complex}--\ref{tbl:cw_complex} is that,  
unlike the Zha--Simon approaches, the new updating schemes no longer exhibit the cubic scaling in $p$.     
It can be seen that the proposed algorithms scale linearly in the update size, leading to 
significant computational savings, especially in the context of large text collections, 
as demonstrated in our TREC8 example in Section~\ref{sec:num}.  
 
We also note that the new schemes allow one to take advantage of the sparsity of the update,  
and require less storage due to working with sparse or thinner dense matrices. 
For example, in the case of appending new documents, in addition to $\Sigma_k$, 
$U_k$, $V_k$, and $D$, the Zha--Simon algorithm requires storing $mp$ elements
of the orthogonal basis from the QR decomposition~\eqref{eqn:qr_doc} and $(k+p)^2$ entries
of the projected matrix~$H_D$. In contrast, the proposed schemes only store $ml + pl$
elements of the $l$ singular (or GKL) vectors of $(I - U_k U_k^T) D$ and $(k+l)(k+p)$
entries of the reduced matrix $H$. Clearly, if $l << p$ then the reduction in storage
is significant.  
%


 
{\small 
\begin{table} 
\begin{centering} 
\begin{tabular}{|c|c|} 
\hline  
Algorithm & Complexity\tabularnewline 
\hline  
\hline  
\multicolumn{1}{|l|}{Alg.~\ref{alg:term_zha} (ZS)} & $(k+p)^{3}+np^{2}+npk+k^{2}(m+n)$  
\tabularnewline 
\hline  
\multicolumn{1}{|l|}{Alg.~\ref{alg:term_sv} (SV)} & $ (k+p)(k+l)^{2}+nl^{2}+nlk+k^{2}(m+n)+\NNZ(D)(l+ k) $ 
\tabularnewline 
\hline  
\multicolumn{1}{|l|}{Alg.~\ref{alg:term_gkl} (GKL)} & $(k+p)(k+l)^{2}+nlk+k^{2}(m+n)+ \NNZ(D)(l + k)$ 
\tabularnewline 
\hline  
\end{tabular} 
\par\end{centering} 
\caption{Asymptotic complexity of different LSI updating schemes for adding new terms.} 
\label{tbl:term_complex} 
\end{table} 
}

If $l = 0$ then the introduced algorithms turn into the early methods in~\cite{BeDuBri:95, OBrien:94}.  
As seen from Tables~\ref{tbl:doc_complex}--\ref{tbl:cw_complex} after setting $l = 0$, 
these methods can be extremely fast, but generally yield modest 
retrieval accuracy.  
This  is not surprising.  In the language of the present paper, 
the schemes~\cite{BeDuBri:95, OBrien:94} represent the SV-RR procedures  
where the left, right or  
both search subspaces stay unmodified during the matrix updates, e.g.,  
always $\bar{U} = U_k$ or \ 
$\bar{V} = V_k$, and are ``too small'' to provide satisfactory retrieval  
quality.   
 
If $l = p$ then the costs  of  
the proposed schemes resemble those of the  
Zha--Simon methods~\cite{Zha.Simon:99}.  
In this case, the search subspaces become sufficiently large to ensure the inclusion of the dominant  
singular subspaces and, similar to the algorithms in~\cite{Zha.Simon:99},  
produce  the exact triplets of $A_D$, $A_T$, or $A_{CW}$. 
As has been discussed, this fixes the problem of the deteriorating retrieval accuracy, however, it 
may result in loss of the efficiency if $p$ is large.

 
{\small 
\begin{table} 
\begin{centering} 
\begin{tabular}{|c|c|} 
\hline  
Algorithm & Complexity\tabularnewline 
\hline  
\hline  
\multicolumn{1}{|l|}{Alg.~\ref{alg:cw_zha} (ZS)} & $(k+p)^{3}+(m+n)(p^{2}+pk+k^{2})$  
\tabularnewline 
\hline  
\multicolumn{1}{|l|}{Alg.~\ref{alg:cw_sv} (SV)} & $(k+l)^{3}+(m+n)(l^{2}+lk+k^{2})+  
(\NNZ(C)+\NNZ(W))(l + k) $\tabularnewline 
\hline  
\multicolumn{1}{|l|}{Alg.~\ref{alg:cw_gkl} (GKL)} & $(k+l)^{3}+(m+n)(lk + k^{2})+ 
(\NNZ(C)+\NNZ(W))(l + k) $\tabularnewline 
\hline  
\end{tabular} 
\par\end{centering} 
\caption{Asymptotic complexity of different updating schemes for correcting the term weights. 
} 
\label{tbl:cw_complex} 
\end{table} 
} 
 
The updating methods of this paper can balance 
computational expenses by appropriate 
choices of $l$ and can be placed in between the two extremes that correspond to 
\cite{BeDuBri:95, OBrien:94} and~\cite{Zha.Simon:99}.  
In the next section, we demonstrate that $l$ can be chosen small  
without sacrificing retrieval accuracy.

\section{Numerical experiments}\label{sec:num} 
 
The goal of this section is to demonstrate the differences in cost and 
accuracy of the discussed updating  schemes, and to verify the results 
of the complexity analysis in subsection~\ref{subsec:complexity}. 
 
In  our  experiments  we  apply  the updating  algorithms  to  several 
standard document  collections, such  as MEDLINE, 
\iftoggle{LONG}
{
CRANFIELD,
}  
\ NPL, and 
TREC8. 
These datasets have sizes that  range from a few thousands of  terms  
and documents to 
hundreds  of  thousands,  and  are  commonly  used  to  benchmark  the 
performance of  text mining  techniques.  Each collection  is supplied 
with a  set of $n_q$ ``canonical'' queries  and expert-generated lists 
of  relevant documents  corresponding  to these  queries.  The  listed 
relevant documents  are the  ones that should  be ideally  returned in 
response to the query,  and therefore provide information necessary to 
evaluate the accuracy of automatic retrieval. 
 
The MEDLINE
\iftoggle{LONG}
{, CRANFIELD,}
\; and NPL term-document matrices are available from 
\url{ftp.cs.cornell.edu/pub/smart/}. 
The TMG software~\cite{TMG-tool} has been used to parse the TREC8 dataset and generate  
the term-document matrix. The standard pre-processing has included stemming and deleting  
common words according to the TMG-provided stop-list. Terms with no more than 5 occurrences  
or with appearances in more than $100,000$ documents have been removed and 125 empty documents  
have been ignored.  
In all tests the weighting schemes for documents and queries have been set to  
\texttt{lxn.bpx}~\cite{Kolda.OLeary:98, Salton:89}. 
 
Due to space limitations we present results only for the case 
of adding new documents. Our experience with the other update types has led to similar 
observations and conclusions.

The tests are organized as follows. Given a term-document matrix $A$, we fix its first $t$ columns,  
and compute the $k$ dominant singular triplets $\Sigma_k$, $U_k$, and $V_k$ of the corresponding  
submatrix of $A$. This submatrix, denoted using the {\sc matlab} array-slicing 
notation by $A(:,1:t)$, represents  
the initial state of the document collection. To simulate the arrival of new documents, the remaining  
$n - t$ columns are consecutively added in groups of $p$ columns. Thus, for the first update, we  
append the submatrix $A(:,t+1:t+p)$, for the next one $A(:, t+p+1 : t+2p)$, etc. 
As a result, in our experiments, we nearly  double (and sometimes triple) the  
initial size $t$ of the collection. 
To track the effects of the update size on the  
efficiency of the updating schemes, for each 
dataset we consider several different values of $p$.  
 
After adding each column group, we update the $k$ singular triplets using the proposed  
Algorithm~\ref{alg:doc_sv} (``SV'') and Algorithm~\ref{alg:doc_gkl} (``GKL''), as well as the 
existing scheme of Zha and Simon in Algorithm~\ref{alg:doc_zha} (``ZS''). Thereafter,  
the similarity scores~(\ref{eqn:scores})  
with $\alpha = 0$ are evaluated for each ``canonical'' query, and  
the average precisions are calculated using the standard $N$-point formula,  
e.g.,~\cite{Harman:95, Kolda.OLeary:98}.  
Since a total of $n_q$ ``canonical'' queries are provided for each  
dataset, we calculate  
the \textit{mean} of the corresponding $n_q$ average precisions and plot the resulting quantity against  
the current number of documents in the collection. We also measure time spent for each update.  
 
All experiments have been performed  in {\sc matlab}.  Note that, with 
this testing environment,  little or nothing can be  deduced about the 
methods'  capabilities  in terms  of  the \textit{absolute}  execution 
times.  However, the timing  results can provide illustrations for the 
complexity                         findings                         in 
Tables~\ref{tbl:doc_complex}--\ref{tbl:cw_complex}    and    give    a 
comparison of the  cost of the schemes \textit{relative to each  other}.  
These comparisons are meaningful when the same conditions are used 
in each case, e.g., the same {\sc matlab} function  
is used for computing the partial SVD 
and the same optimization techniques, if any, are  applied.  
 
Due to  the established unifying framework  of the 
SV-RR procedure, our codes are  organized to follow the same execution 
path  and  differ  only  in  ``localized'' subtasks,  such  as,  e.g., 
computing   a  few   dominant   singular  triplets   in   step  1   of 
Algorithm~\ref{alg:doc_sv}   or  the   GKL  vectors   in  step   1  of 
Algorithm~\ref{alg:doc_gkl} instead  of the QR decomposition  of $(I - 
U_k  U_k^T) D$  in the  original Algorithm~\ref{alg:doc_zha}.  All the 
linear    algebra   operations,    such    as   dense    matrix-matrix 
multiplications,  the SVD  and  QR decompositions,  SpMVs, etc.,  are 
accomplished by the same {\sc  matlab} routines in all of the updating 
schemes. 
 
In step 1 of Algorithm~\ref{alg:doc_gkl}, we use our own straightforward {\sc matlab} implementation of the GKL procedure,  
which is based on Algorithm~\ref{alg:GKL}. Our singular value solver  
in step 1 of Algorithm~\ref{alg:doc_sv} is built on top of this GKL procedure by  
additionally performing the SV-RR computation with respect to the left and right GKL vectors.  
The latter amounts to the SVD of a bidiagonal matrix followed by the construction of the Ritz  
singular vectors. The convergence of the singular triplets is achieved, and the solver is stopped,  
after the difference between the sums of the $k$ dominant singular value approximations on two consecutive  
iterations becomes smaller than $10^{-1}$. The maximum number of iterations is set to $p$. 
 
The starting vector in the GKL algorithm 
is always set to the vector of all ones and then normalized to have unit length. We have observed that  
this choice often leads to a higher retrieval accuracy compared to a random vector.   
%
The reduced dimensions  $k$ are set to (nearly) optimal, in the retrieval accuracy, 
values   as  observed  in~\cite[Figures   5-6]{ChenSaad:FiltLan}.  The 
parameter $l$ that  determines the number of singular triplets in 
step 1 of Algorithm~\ref{alg:doc_sv} or  the number of the GKL vectors 
in step  1 of Algorithm~\ref{alg:doc_gkl} is  experimentally chosen to 
provide  balance  between  the  computational  cost  and  retrieval 
accuracy. 
In particular, for all examples except for TREC8, $l$ is
chosen to be the smallest value that leads to the retrieval accuracy comparable
to that of the standard algorithm.  
 
\begin{figure}[h] 
 \begin{center} 	 
 \includegraphics[width = 5.5cm]{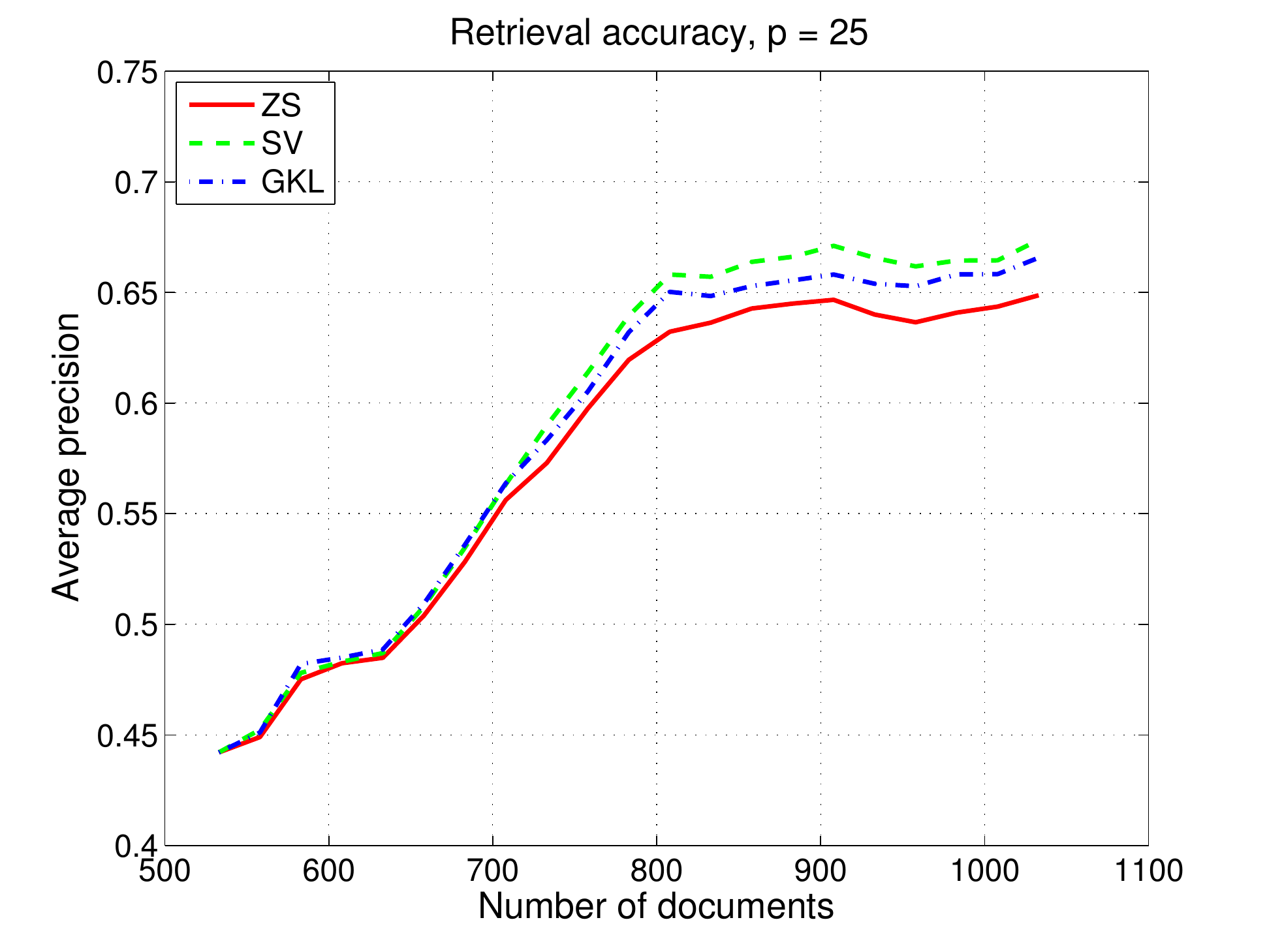} 
 \includegraphics[width = 5.5cm]{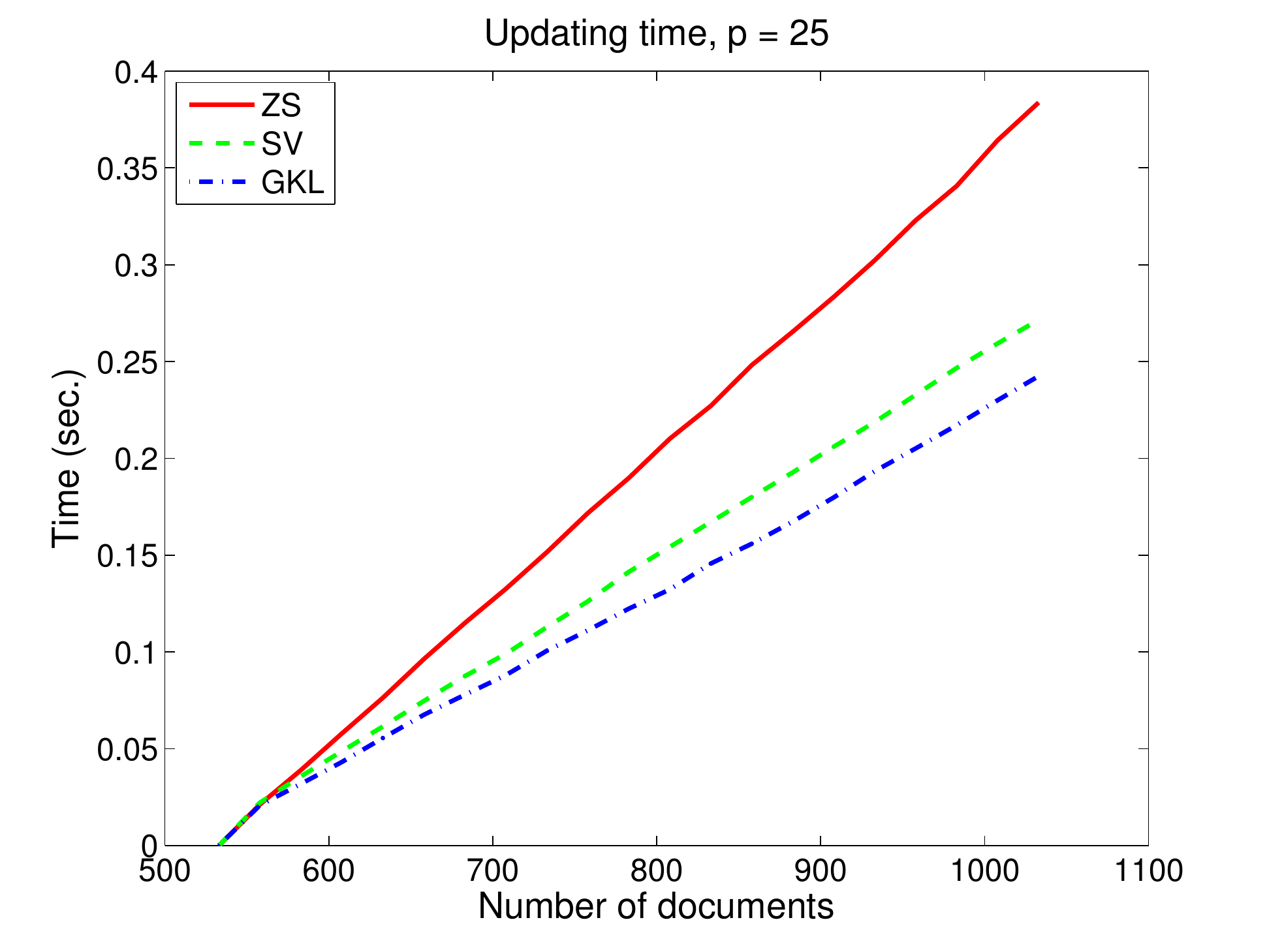} \\ 
 \includegraphics[width = 5.5cm]{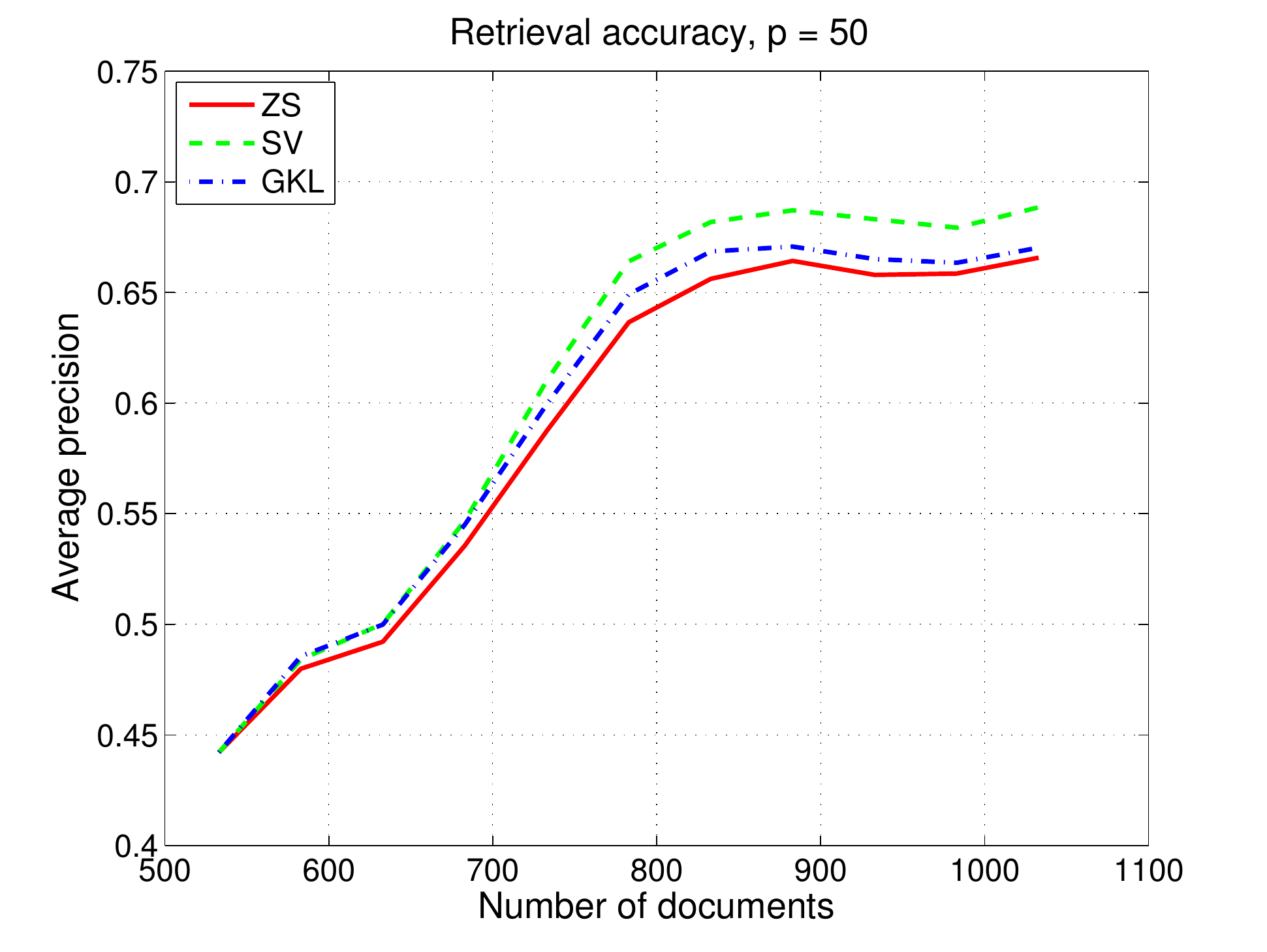} 
 \includegraphics[width = 5.5cm]{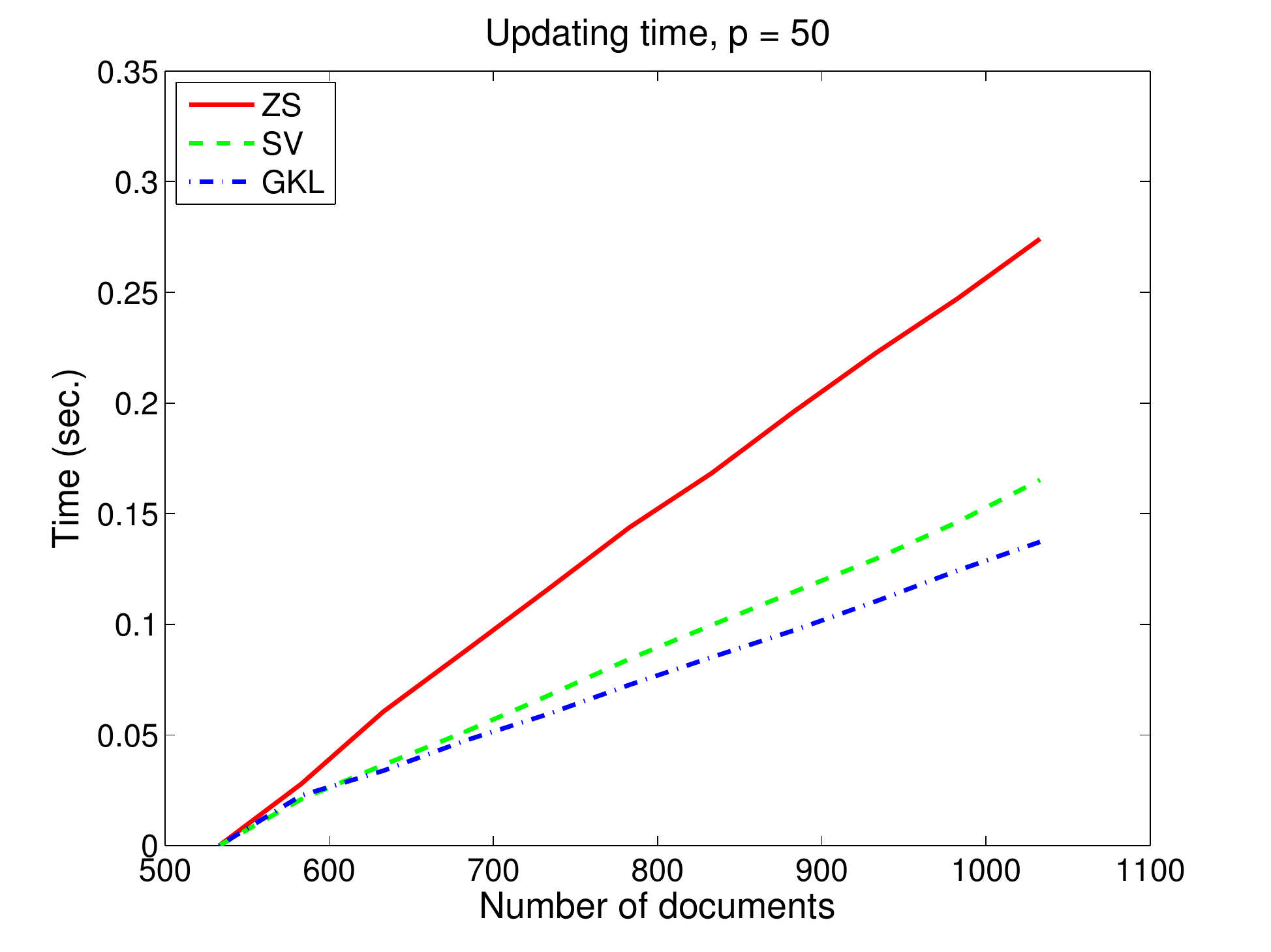}  
 \end{center} 
 \caption{MEDLINE collection: $m = 7,014$, $n = 1,033$, $k = 75$, $t = 533$, $n_q = 30$.  
The average precision and time for adding groups of $p = 25$ (top) and $p = 50$ (bottom)  
documents. The number of singular triplets of $(I - U_k U_k^T) D$ computed by Algorithm~\ref{alg:doc_sv} (``SV'') 
is $2$ (top) and $4$ (bottom). The number of GKL steps in Algorithm~\ref{alg:doc_gkl} (``GKL'')  
is $3$ (top) and $5$ (bottom).   
The methods are compared to Algorithm~\ref{alg:doc_zha} (``ZS'').
}\label{fig:med}  
\end{figure} 
 
Figure~\ref{fig:med} compares different updating schemes for the MEDLINE collection.  
This collection is known to be small, 
with the term-document matrix having $m = 7,014$ rows and $n = 1,033$ columns.  
We fix the initial $t = 533$ columns and add the rest in groups of $p = 25$ (top) and $p = 50$ (bottom).    
The size $k$ of the reduced subspace is set to $75$. 
 
The left-hand side plots  demonstrate the differences in the retrieval 
accuracy.   The plots  to the  right compare  the timing  results. The 
horizontal axes  represent the number  of documents in  the collection 
after consecutive  updates.  The vertical axes correspond  to the mean 
of  the average  precisions  (left) and  the \textit{cumulative}  time 
(right), i.e., the  total time spent by the  algorithms to perform the 
current and preceding updates. 
 
The example demonstrates that the number $l$ of the singular and  
GKL vectors generated by Algorithms~\ref{alg:doc_sv} and \ref{alg:doc_gkl} 
can be very small. In particular, we construct as few as 2-3 singular triplets and 3-4 GKL vectors.  
As        anticipated       from        the        discussion       in 
subsection~\ref{subsec:complexity},  this  fact   gives  rise  to   
updating  schemes that  are  significantly faster  than the  Zha--Simon 
approach in Algorithm~\ref{alg:doc_zha}. 
Figure~\ref{fig:med} (right) confirms that the new strategies in Algorithms~\ref{alg:doc_sv}  
and~\ref{alg:doc_gkl} are indeed considerably faster than the existing scheme.  
Remarkably, the gain in the efficiency comes without any loss of the retrieval accuracy; 
see Figure~\ref{fig:med} (left).  
 
It can be observed from the figure that Algorithm~\ref{alg:doc_sv} gives the  
most accurate results though it requires slightly more compute time than  
Algorithm~\ref{alg:doc_gkl}. This time difference is caused by the overhead  
accumulated by the singular value solver to ensure the convergence and perform  
the extraction of the approximate singular triplets in step 1 of Algorithm~\ref{alg:doc_sv}. 
%

{\small 
\begin{table} 
\begin{centering} 
\begin{tabular}{|l||c|c|c|c|c|c|}
\hline 
\# top ranked doc. & 10  & 30 & 40  & 70 & 500 & 1,000\tabularnewline
\hline 
\# rel. doc. (ZS) & 7  & 16 & 17  & 20  & 23 & 23\tabularnewline
\hline 
\# rel. doc. (SV) & 10  & 29 & 35  & 38  & 39 & 39\tabularnewline
\hline 
pval & 0.06  & $1.1\times10^{-4}$ & $2.5\times10^{-5}$   & 0.002  & 0.036 & 0.039\tabularnewline
\hline 
\end{tabular}
\par\end{centering} 
\caption{MEDLINE collection: the two sample proportion tests for the 
relevant document counts obtained using Algorithm~\ref{alg:doc_zha} (``ZS'') and 
Algorithm~\ref{alg:doc_sv} (``SV'').
} 
\label{tbl:pval_medline} 
\end{table} 
} 

Note that the number of GKL vectors in Algorithm~\ref{alg:doc_gkl} is slightly 
larger than that of the singular triplets in Algorithm~\ref{alg:doc_sv}. This  
represents a ``compensation'' for the non-optimal choice of the low-rank approximation  
of $(I - U_k U_k^T) D$ adopted by Algorithm~\ref{alg:doc_gkl}.  
Further increase in the number of the GKL vectors may lead to higher retrieval accuracies,  
but the timing is accordingly affected. The latter observation is true for all of our experiments  
and is consistent with the relevant results in~\cite{ChenSaad:FiltLan}. 

In order to yet more carefully compare the retrieval accuracy results, 
in Table~\ref{tbl:pval_medline} we report numbers of relevant documents
(``\# rel. doc.'') among the $j$ top ranked (``\# top ranked doc.'') 
for the Zha--Simon (``ZS'') and the new (``SV'') approaches. 
Here the retrieval is performed from the entire collection
($n = 1,033$), after all columns of $A$ have been appended, using the approximate
SVD's generated by the two different updating modes.  

The results  in Table~\ref{tbl:pval_medline} demonstrate  that the new
approach gives noticeably larger numbers of relevant document, i.e., a
higher  precision is  obtained  at any  level~$j$.  One might debate,
however,  whether  the  difference  in  precision  is  
statistically significant  or if it is due to chance alone.

To address this issue we perform a two sample proportion test whose goal is
to determine  whether or not  the difference between two proportions 
is significant, see, e.g., \cite[Chap. 10]{Wasserman-book}.
Given the two proportions of relevant documents out of $j$ top ranked, 
the test has as its null hypothesis that the proportions are drawn from 
the same binomial
distribution. The ``pval'' values, giving the probability under the null 
hypothesis, 
are reported in Table~\ref{tbl:pval_medline}. One can observe that
these values appear to be very small, not exceeding $0.06$ (for $j = 10$)  
and getting as low as $2.5\times10^{-5}$ (for $j = 40$). Thus, the null 
hypothesis can be rejected with at least 94\% confidence for any level $j$, 
i.e., the difference in precisions can be seen as 
statistically significant.

 
\iftoggle{LONG}{ 
\begin{figure}[h] 
 \begin{center} 	 
 \includegraphics[width = 5.5cm]{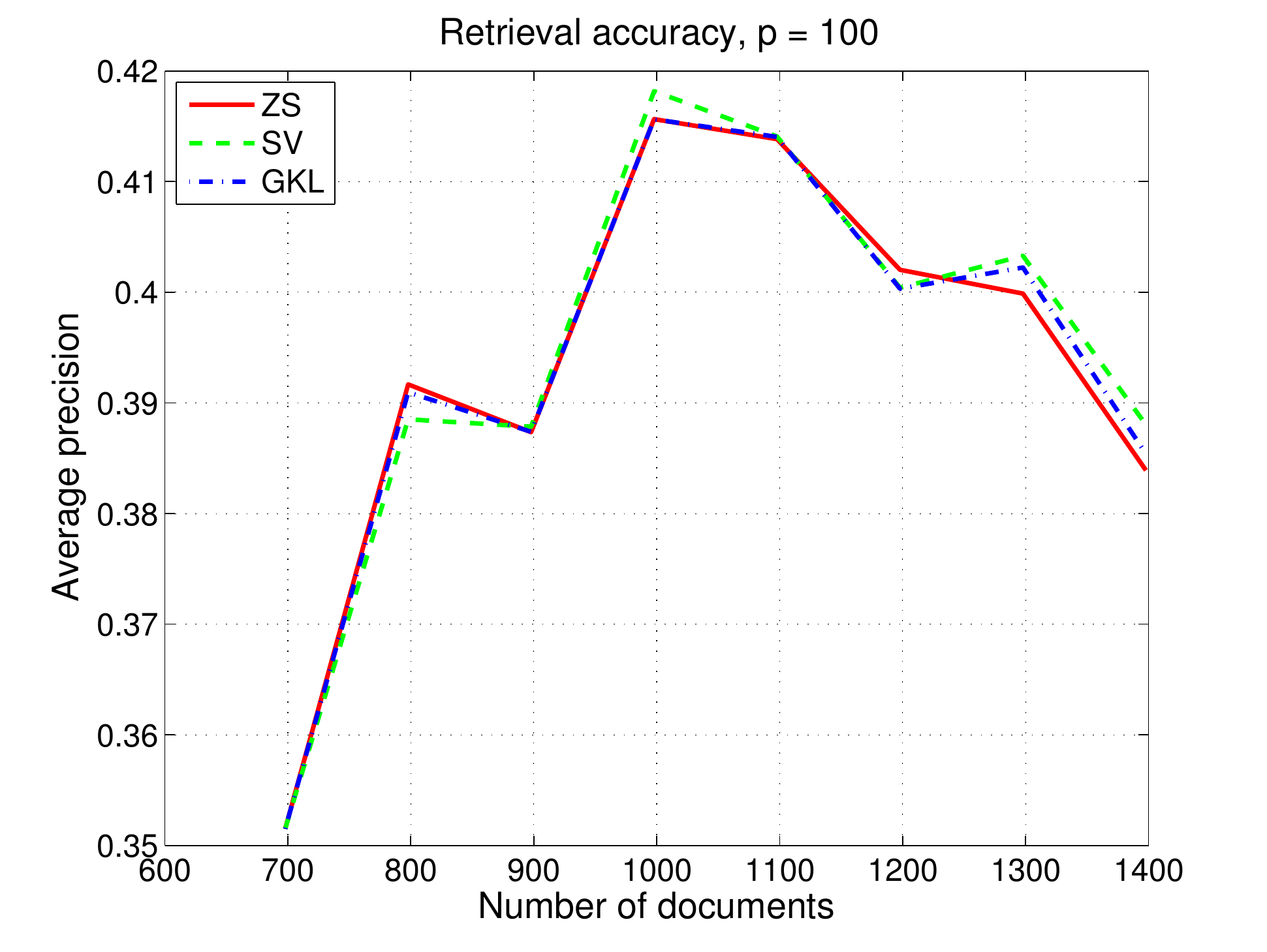} 
 \includegraphics[width = 5.5cm]{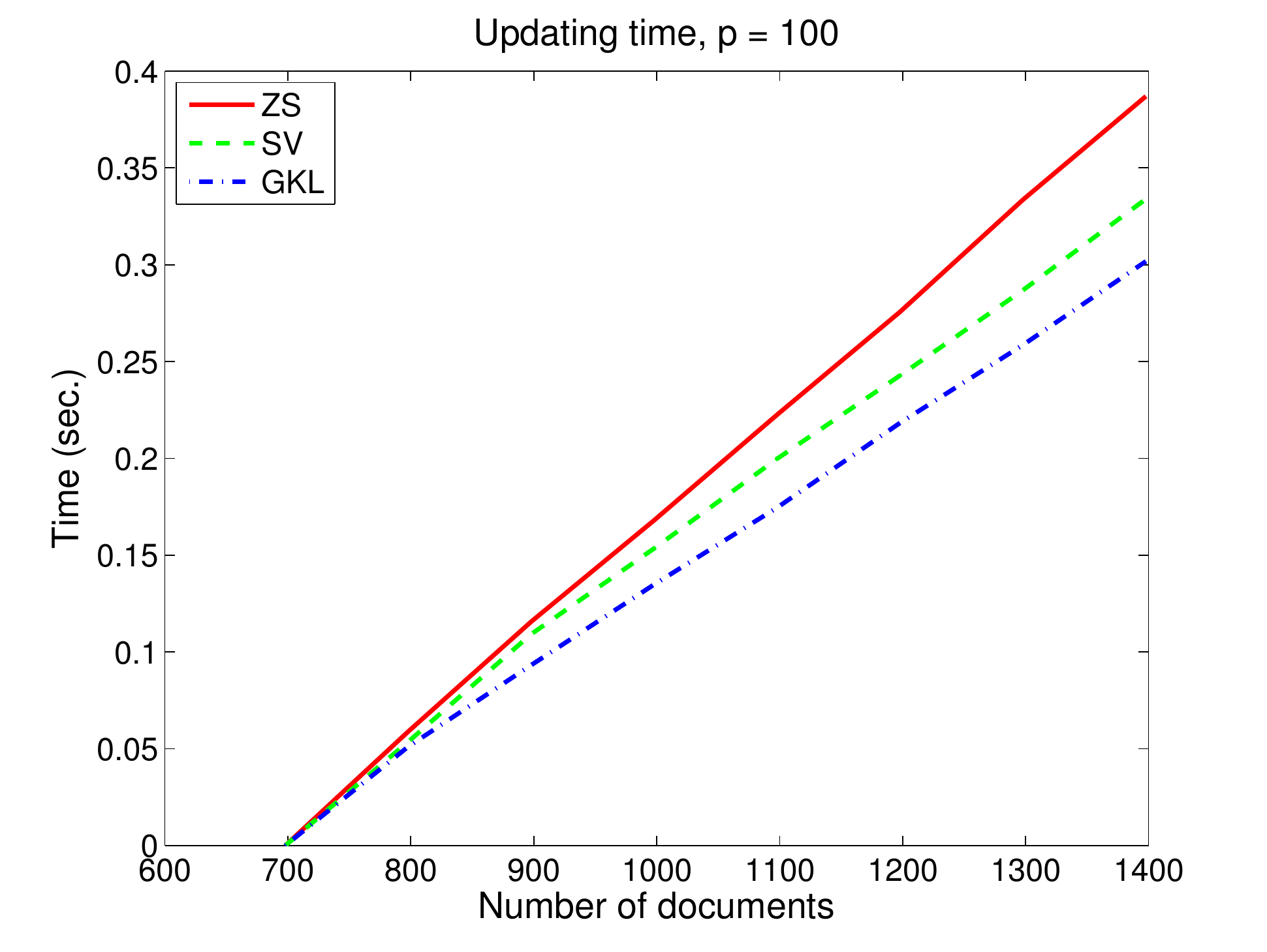} \\ 
 \includegraphics[width = 5.5cm]{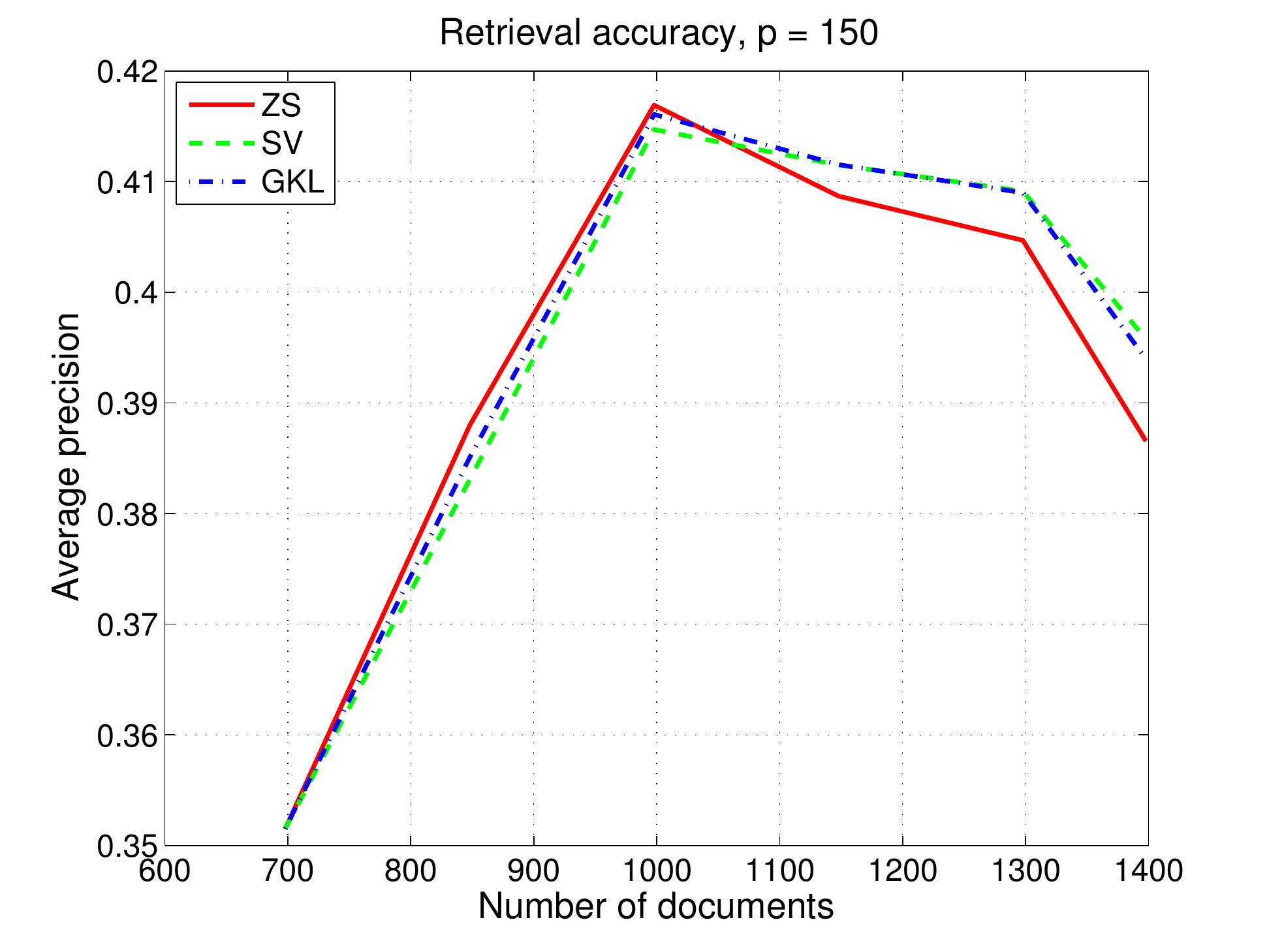} 
 \includegraphics[width = 5.5cm]{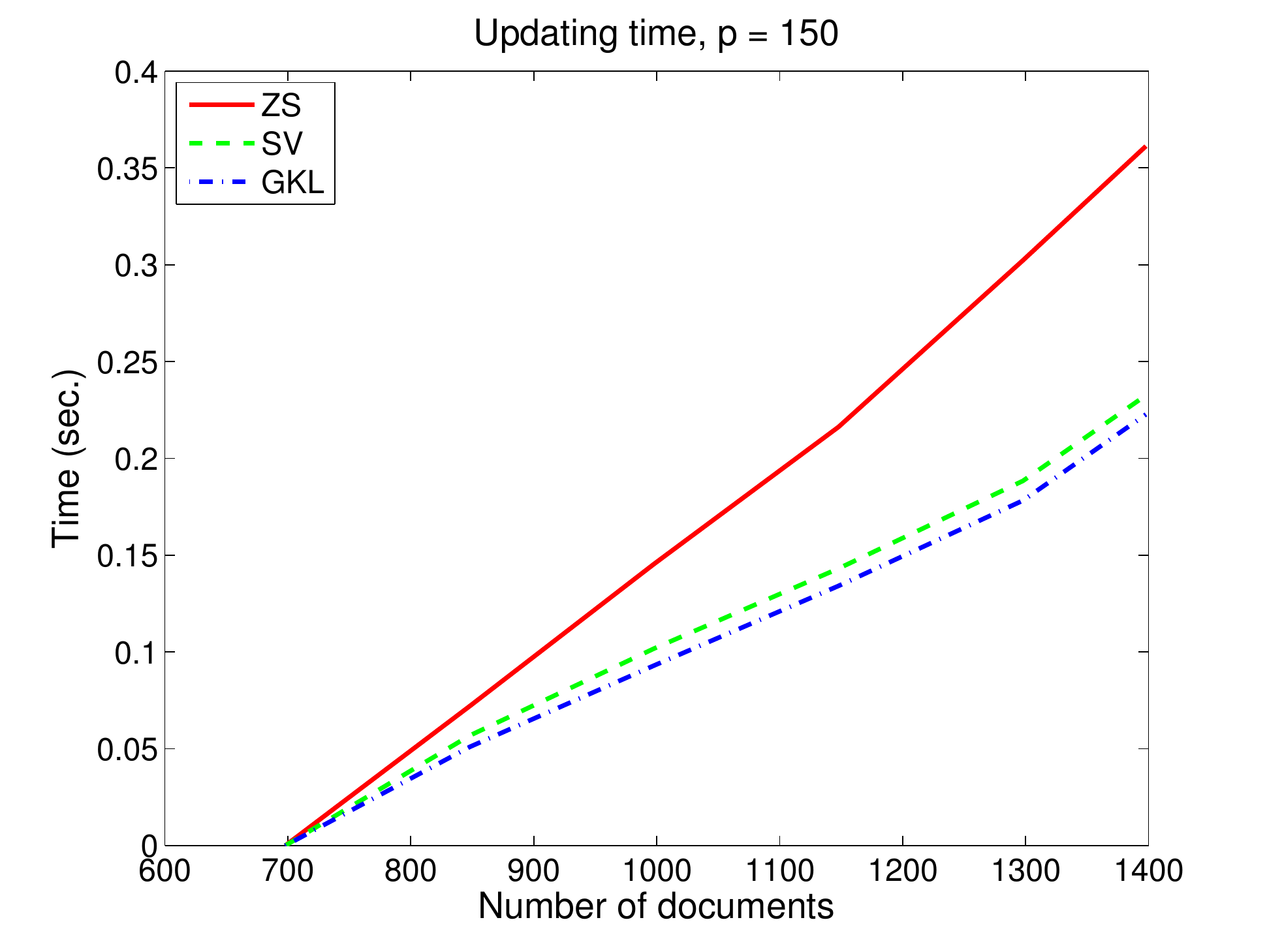} 
 \end{center} 
 \caption{ 
CRANFIELD collection: $m = 3,763$, $n = 1,398$, $k = 150$, $t = 698$, $n_q = 225$. 
The average precision and time for adding groups of $p = 100$ (top) and $p = 150$ (bottom)  
documents. The number of singular triplets of $(I - U_k U_k^T) D$ computed by Algorithm~\ref{alg:doc_sv} (``SV'') 
is $25$ (top and bottom). The number of GKL steps in Algorithm~\ref{alg:doc_gkl} (``GKL'') is 51 (top) and  
45 (bottom).  
The methods are compared to Algorithm~\ref{alg:doc_zha} (``ZS'').
}\label{fig:cran} 
\end{figure}

Figure~\ref{fig:cran} displays the results for the CRANFIELD collection. 
Similar to MEDLINE, CRANFIELD represents another example of a small text collection, 
with the term-document matrix having $m = 3,763$ rows and $n = 1,398$ columns.     
Following our test framework, we fix the initial $t = 698$ columns and add the rest in groups  
of $p = 100$ (top) and $p = 150$ (bottom). The dimension $k$ of the reduced subspace is set to $150$. 
 
The number $l$ of singular triplets   
in Algorithm~\ref{alg:doc_sv} is chosen to be 25 for both  
values of $p$. The number of GKL steps is set to 51 ($p = 100$) and 45 ($p = 150$).   
In contrast to the previous example, smaller values of $l$ fail to deliver acceptable retrieval accuracies. 
Nevertheless, as can be seen in Figure~\ref{fig:cran} (right),  
the new updating schemes are still noticeably faster than Algorithm~\ref{alg:doc_zha}.   
The retrieval accuracy is comparable for all three approaches, and is slightly higher  
for the new schemes at the later updates. 
} 
 
\begin{figure}[h] 
 \begin{center} 	 
 \includegraphics[width = 5.5cm]{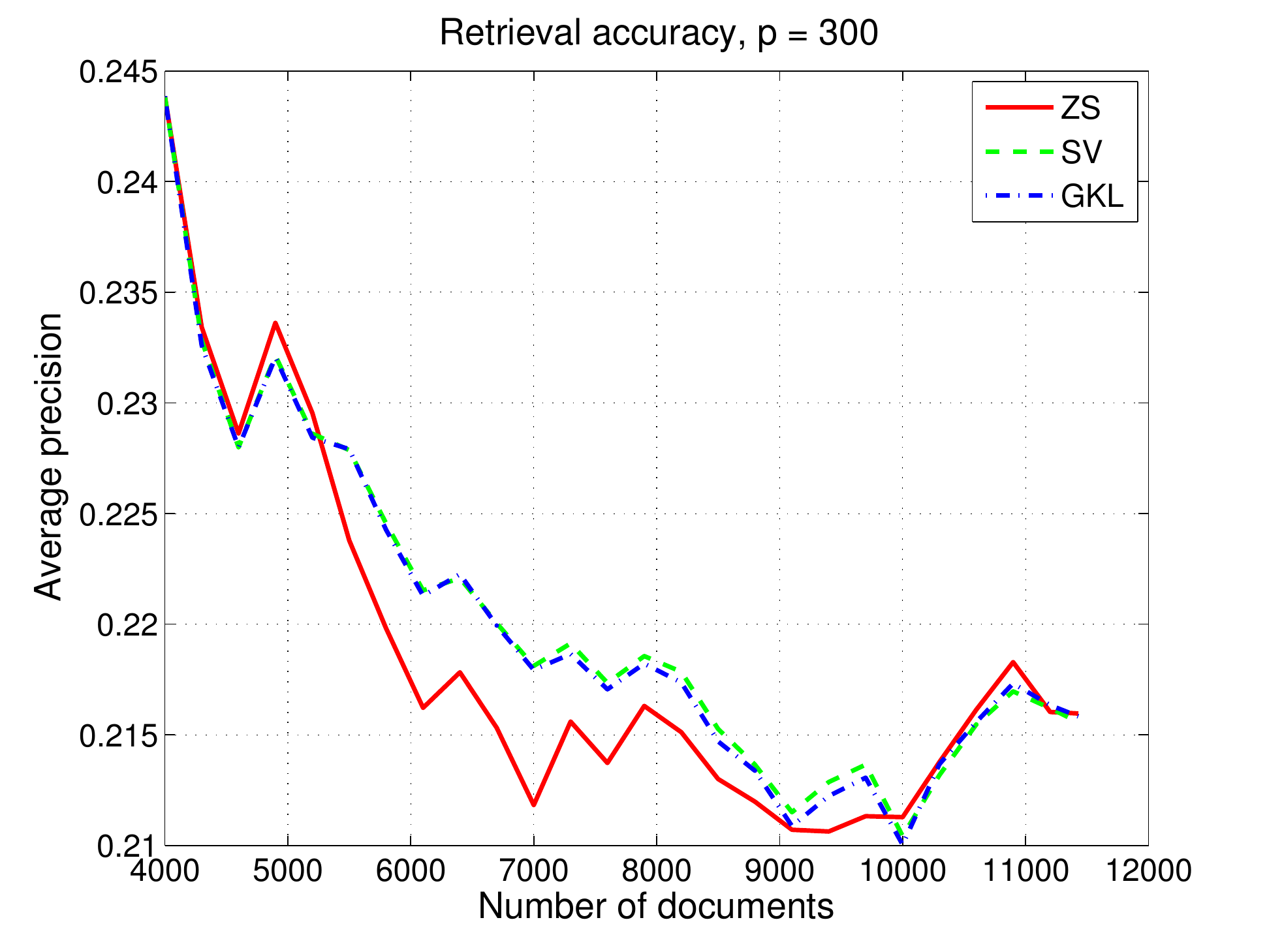} 
 \includegraphics[width = 5.5cm]{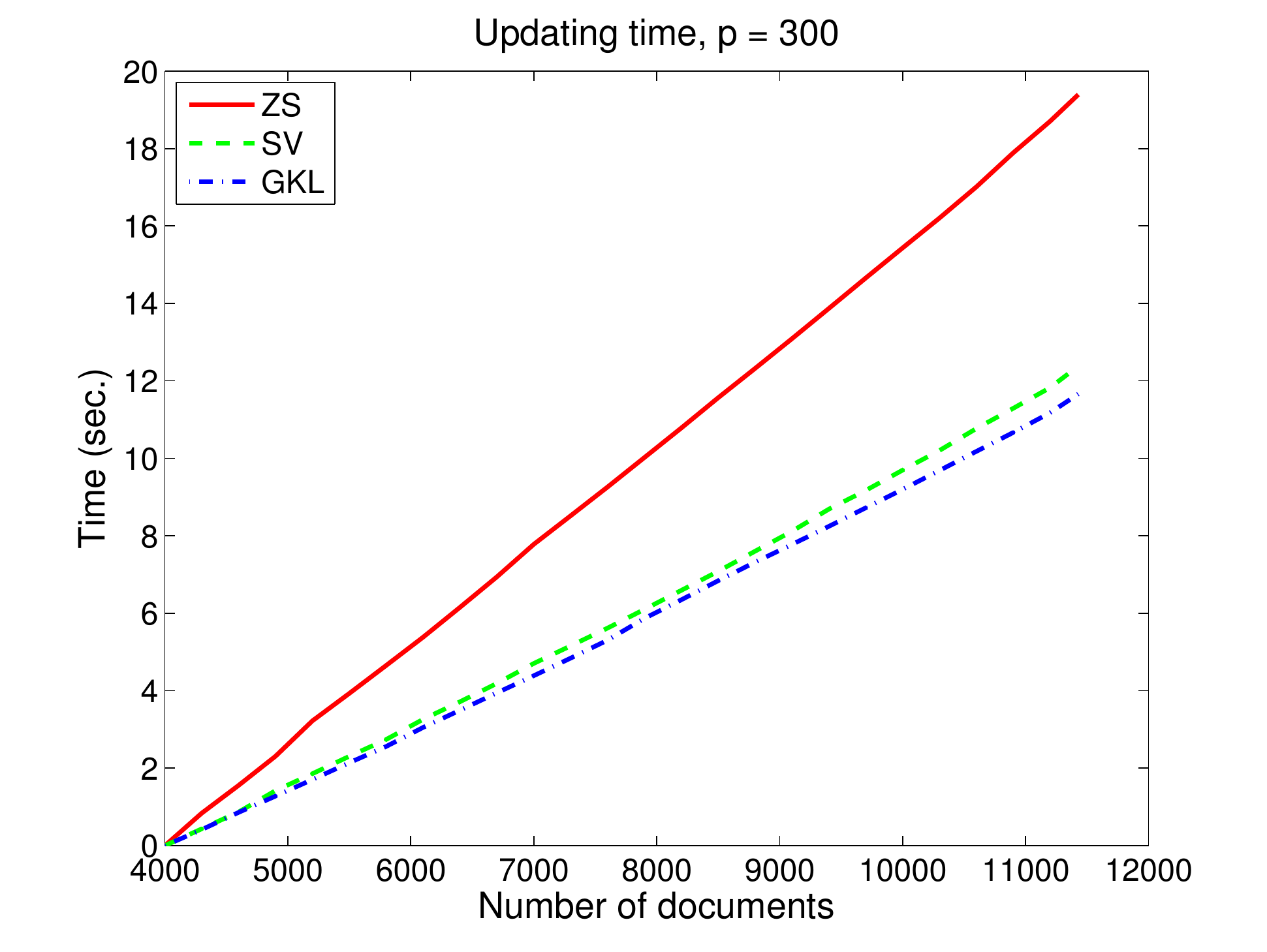} \\ 
 \includegraphics[width = 5.5cm]{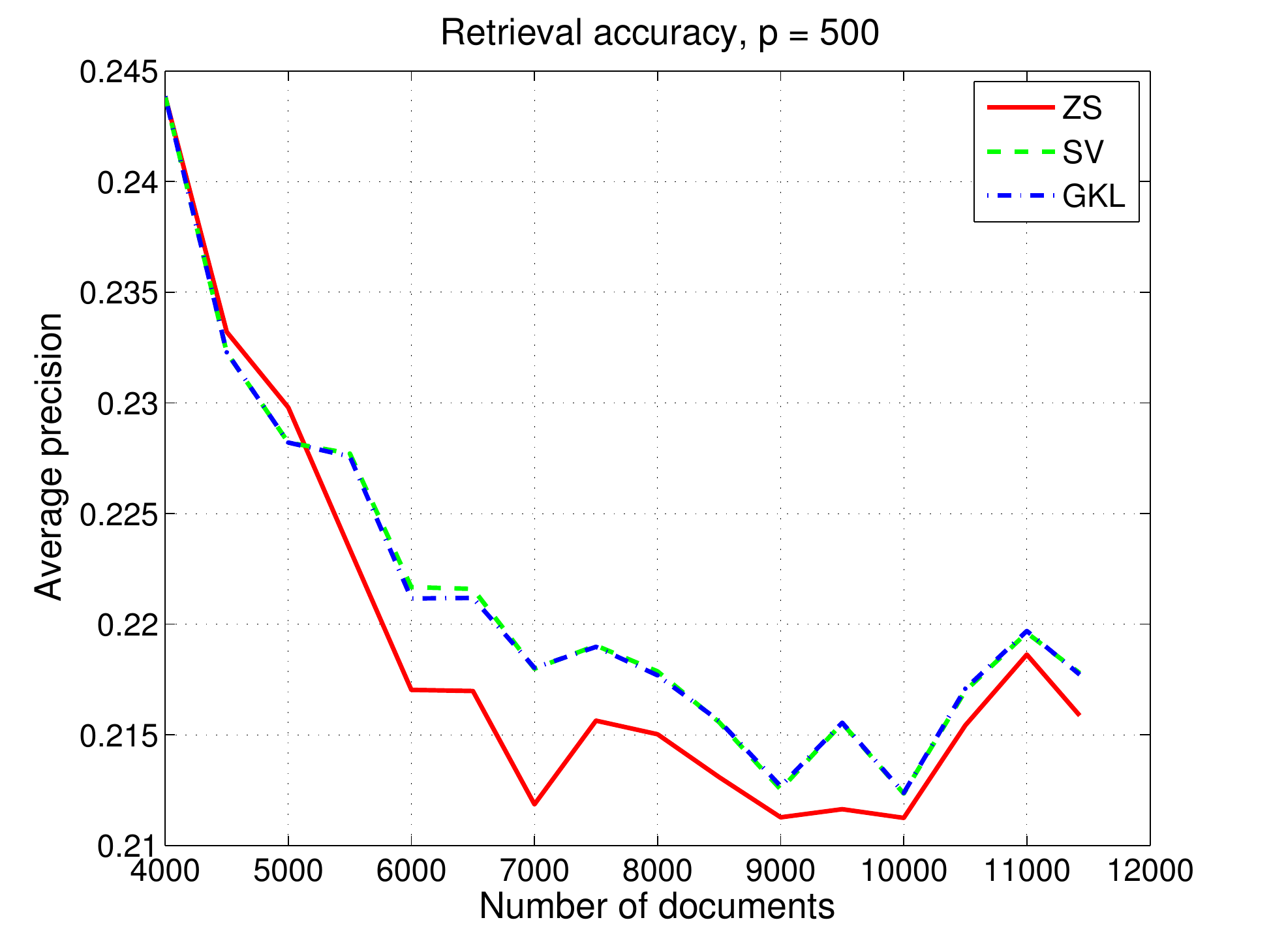} 
 \includegraphics[width = 5.5cm]{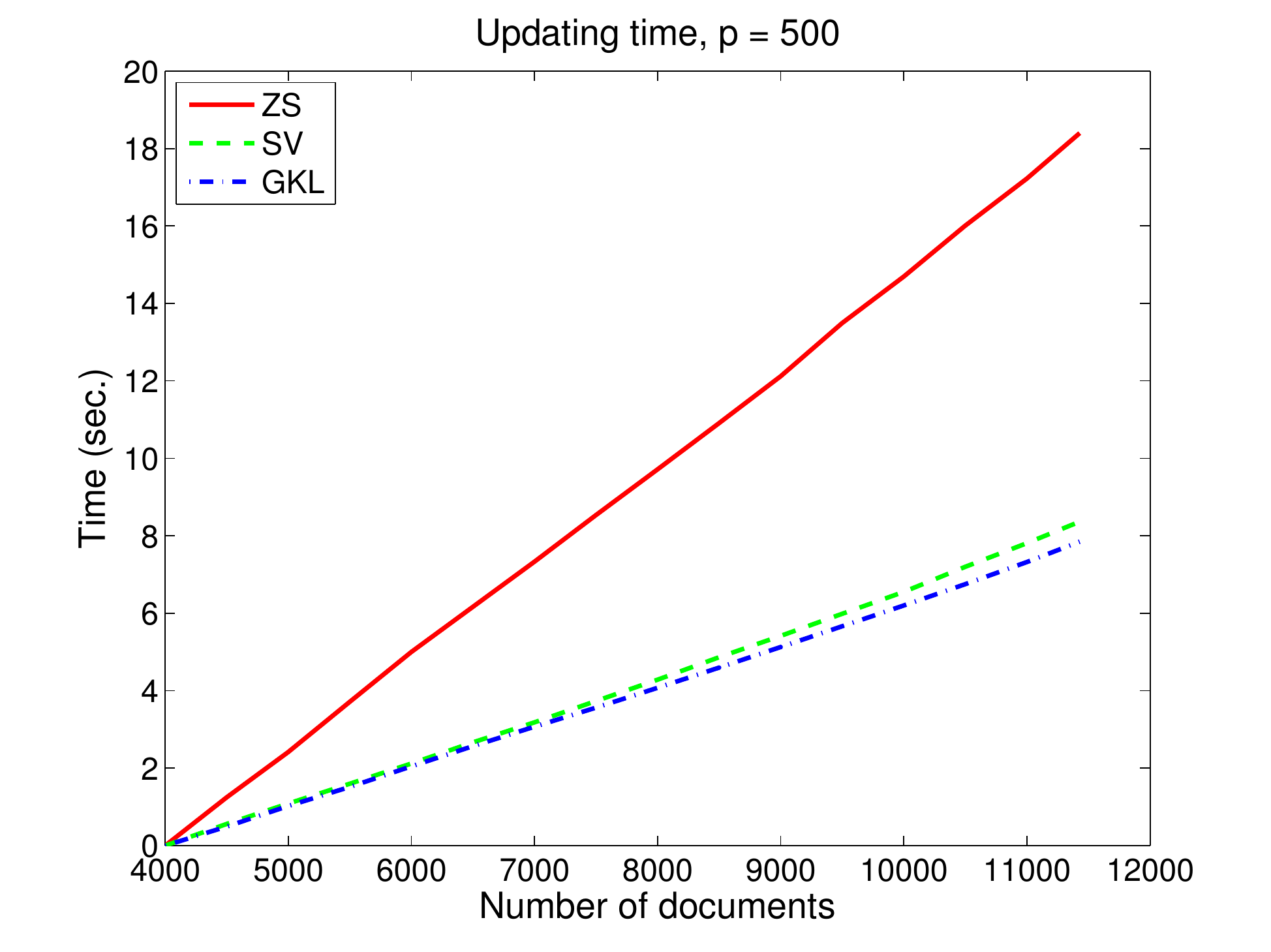} \\ 
 \end{center} 
 \caption{ 
NPL collection: $m = 7,491$, $n = 11,429$, $k = 550$, $t = 4,000$, $n_q = 93$.  
The average precision and time for adding groups of $p = 300$ (top) and $p = 500$ (bottom)  
documents. The number of singular triplets of $(I - U_k U_k^T) D$ computed by Algorithm~\ref{alg:doc_sv} (``SV'') 
is $10$ (top and bottom). The number of GKL steps in Algorithm~\ref{alg:doc_gkl} (``GKL'') is $20$ (top and bottom).   
The methods are compared to Algorithm~\ref{alg:doc_zha} (``ZS'').
}\label{fig:npl} 
\end{figure} 
 
In Figure~\ref{fig:npl} we report  results for the NPL collection. NPL 
is a  larger text collection with the  associated term-document matrix 
having  $m =  7,491$ rows  and $n  = 11,429$  columns.  Note  that, in 
contrast to the previous case,  here the number of rows is 
smaller than the number of columns. In this sense, NPL provides a more 
representative example of the real-world large-scale text collections, 
where  the number  of terms  in the  vocabulary is  limited  while the 
number of  documents, in principle, can become  arbitrarily large.  For 
the test purposes we fix $t  = 4,000$ initial columns and add the rest 
in groups of $p = 300$ and $p = 500$; $k = 550$. 

{\small 
\begin{table} 
\begin{centering} 
\begin{tabular}{|l||c|c|c|c|}
\hline 
\# top ranked doc. & 100 & 500 & 1,000 & 11,000\tabularnewline
\hline 
\# rel. doc. (ZS) & 11 & 18 & 21  & 22\tabularnewline
\hline 
\# rel. doc. (SV) & 58 & 72 & 73  & 82\tabularnewline
\hline 
pval & $2.7\times10^{-12}$ & $2.4\times10^{-9}$ & $3.9\times10^{-8}$ & $3.7\times10^{-9}$\tabularnewline
\hline 
\end{tabular}
\par\end{centering} 
\caption{NPL collection: the two sample proportion tests for the 
relevant document counts obtained using Algorithm~\ref{alg:doc_zha} (``ZS'') and 
Algorithm~\ref{alg:doc_sv} (``SV'').
} 
\label{tbl:pval_npl} 
\end{table} 
} 
 
Figure~\ref{fig:npl} shows that for the NPL dataset the new updating schemes are also 
faster and deliver a comparable retrieval accuracy.
Note that the number $l$ of the singular  
and GKL vectors is kept reasonably low.  
In particular, we request 10 singular triplets in step 1 of Algorithm~\ref{alg:doc_sv} 
and 20 GKL vectors in step 1 of Algorithm~\ref{alg:doc_gkl}.   

In Table~\ref{tbl:pval_npl}, we further assess the retrieval accuracy.
Similar to Table~\ref{tbl:pval_medline} for the MEDLINE collection, 
after completing the whole cycle of updates, we present numbers of relevant documents 
among the $j$ top ranked for the Zha--Simon (``ZS'') and the new (``SV'') approaches. 
In the same manner, we run the two sample proportion test 
for each number $j$ of the top ranked documents and report the ``pval'' values. 
As in the previous example in Table~\ref{tbl:pval_medline}, 
these values turn out to be very small, suggesting statistical significance
of the difference in the precision results. 

 
\begin{figure}[h] 
 \begin{center} 	 
 \includegraphics[width = 5.5cm]{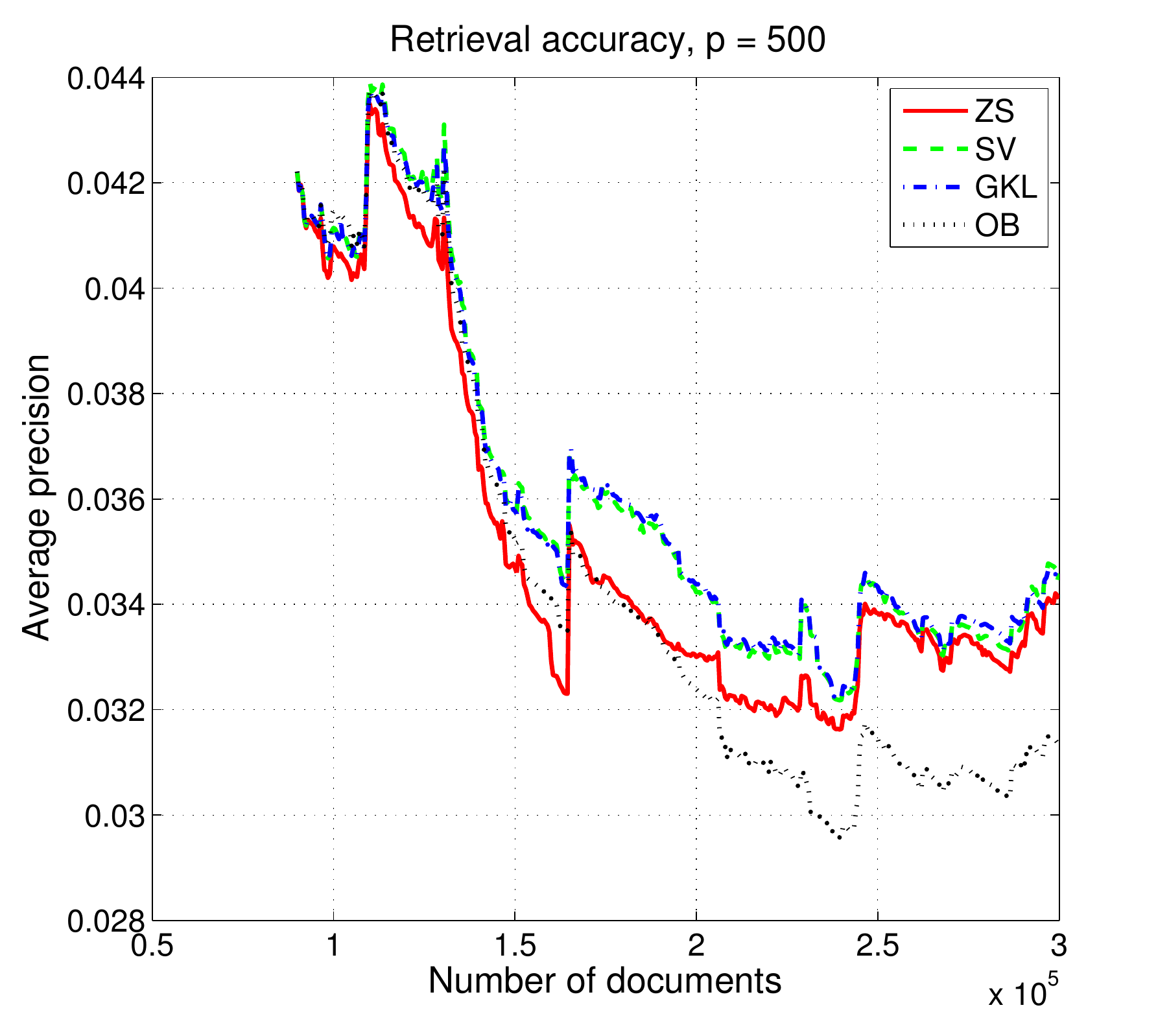} 
 \includegraphics[width = 5.5cm]{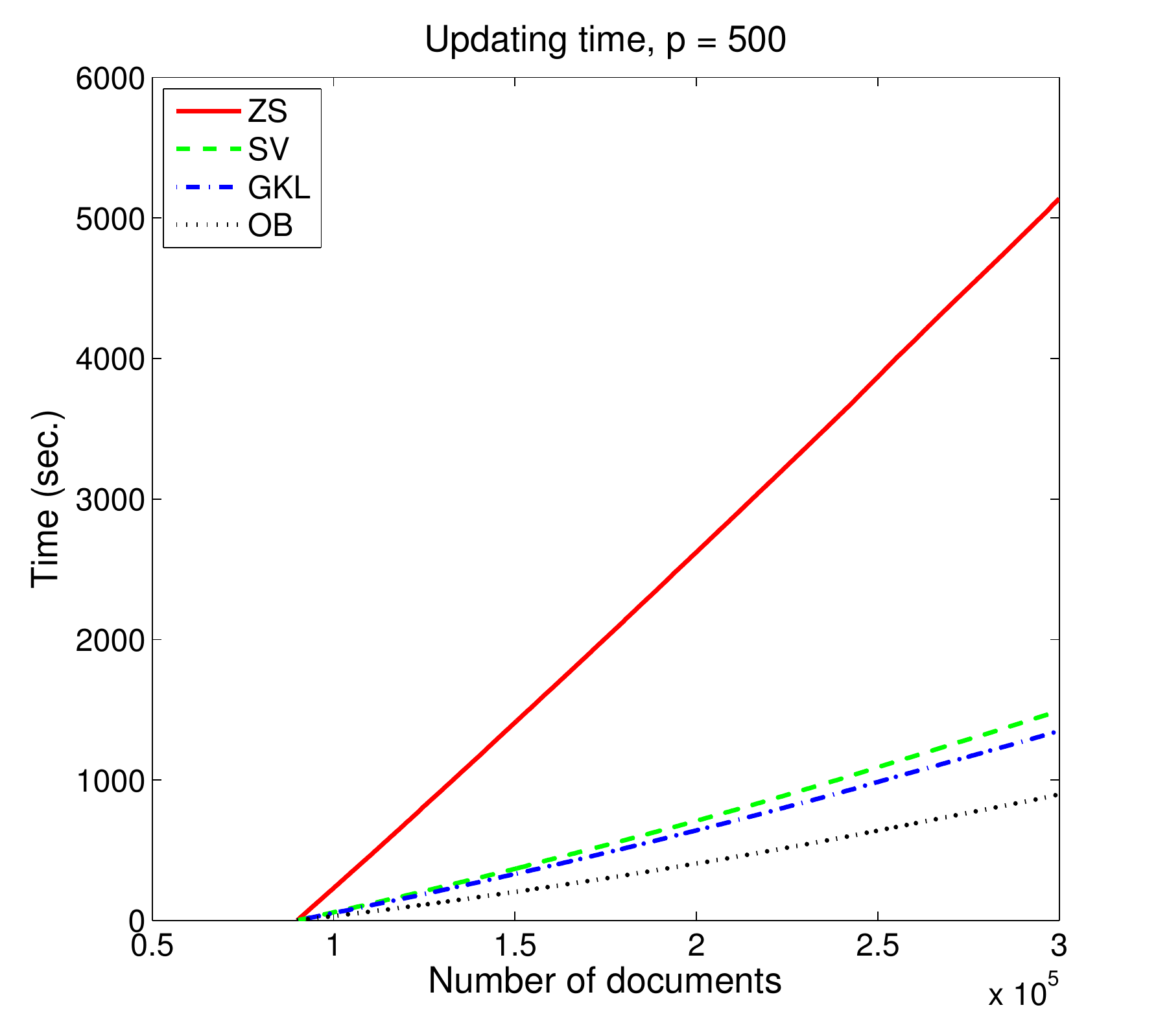} \\ 
 \includegraphics[width = 5.5cm]{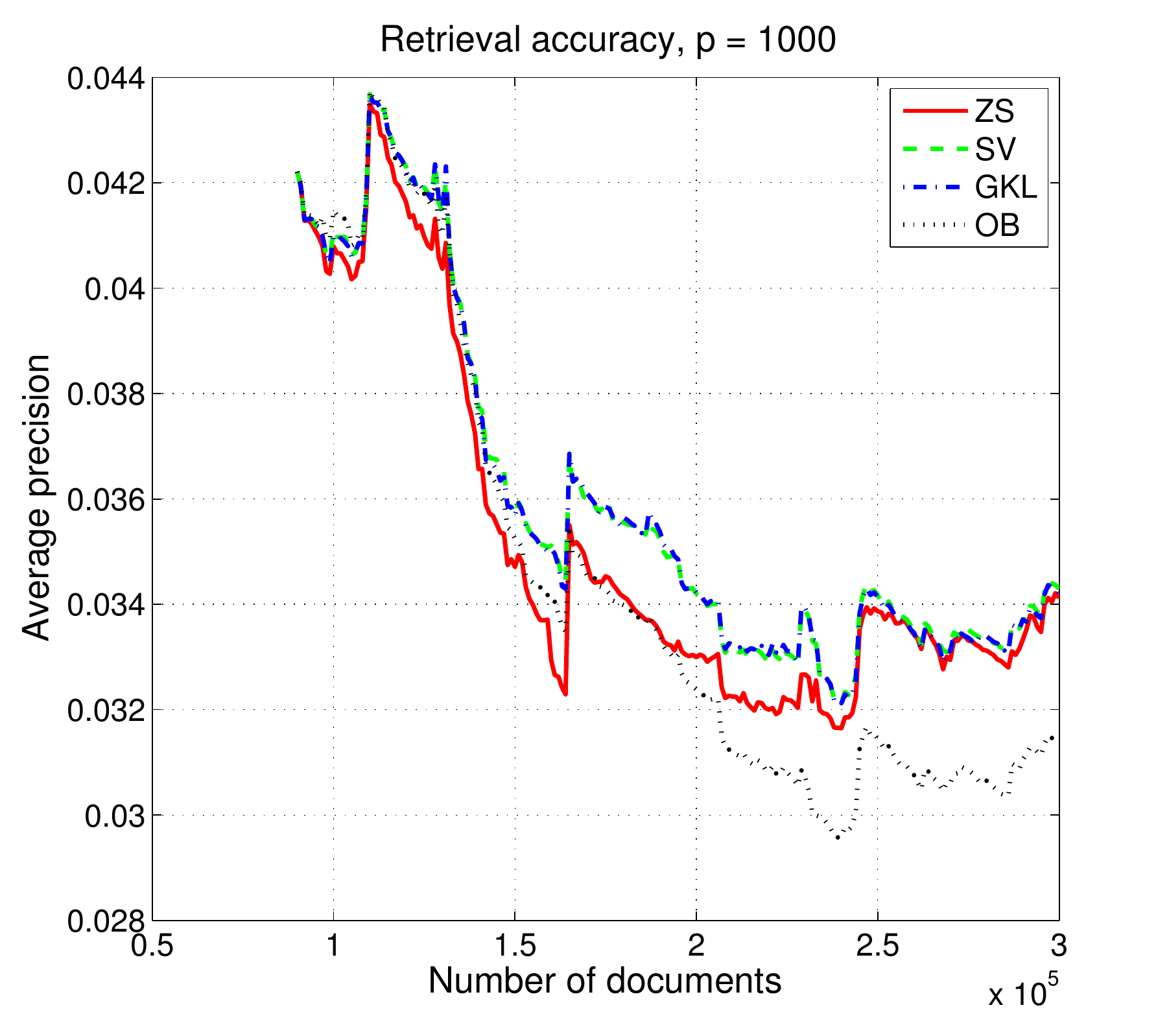} 
 \includegraphics[width = 5.5cm]{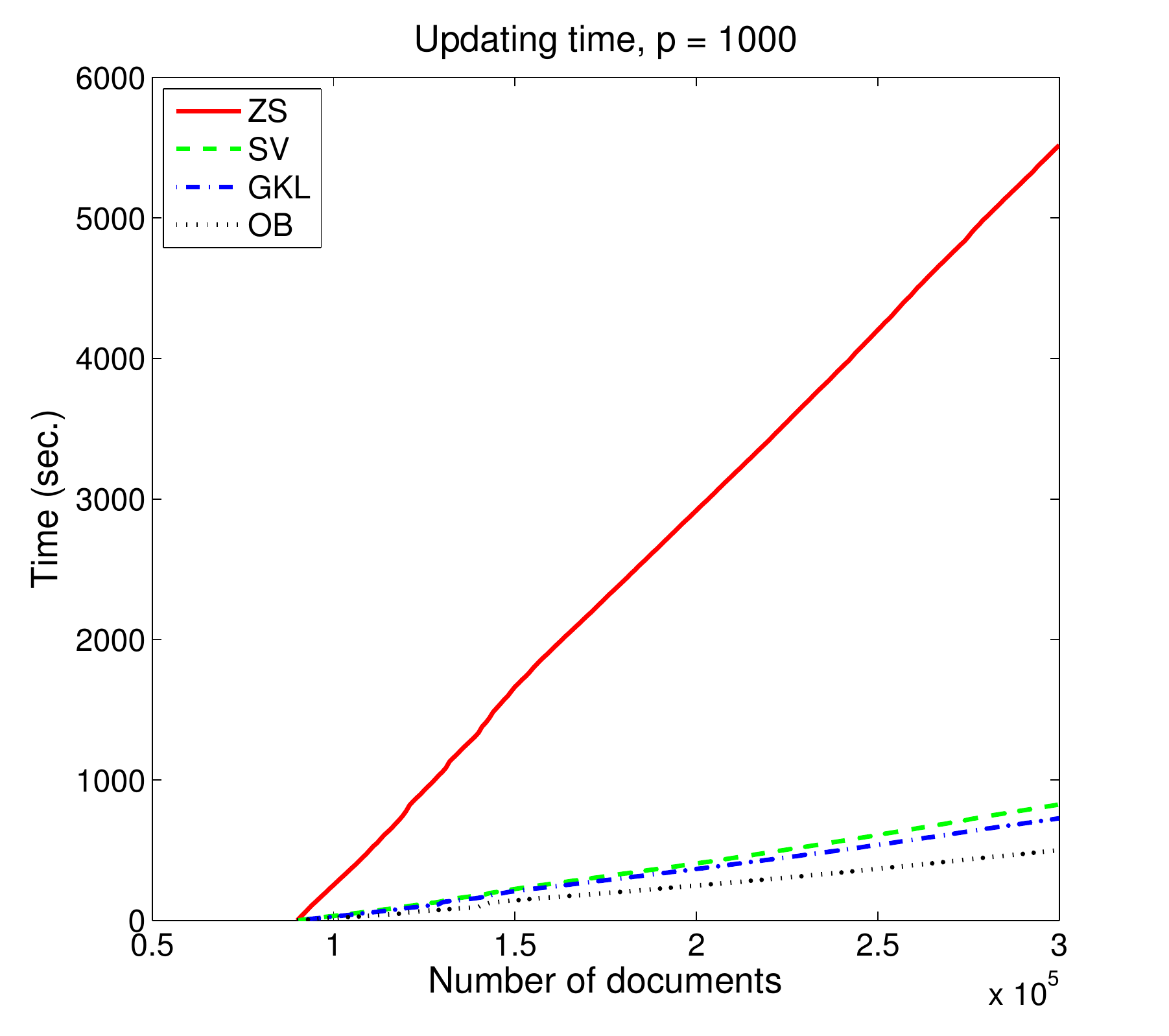}  
 \end{center} 
 \caption{ 
TREC8 collection: $m = 138,232$, $n = 528,028$, $k = 400$, $t = 90,000$, $n_q = 50$.  
The average precision and time for adding groups of $p = 500$ (top) and $p = 1000$ (bottom)  
documents. The number of singular triplets of $(I - U_k U_k^T) D$ computed by Algorithm~\ref{alg:doc_sv} (``SV'') 
is $10$. The number of GKL steps in Algorithm~\ref{alg:doc_gkl} (``GKL'') is $20$.
The methods are compared to Algorithm~\ref{alg:doc_zha} (``ZS'') and the updating 
scheme~\cite{BeDuBri:95, OBrien:94} (``OB''). 
}\label{fig:trec} 
\end{figure} 
 
Figure~\ref{fig:trec} concerns a larger example given by the TREC8 dataset. 
This dataset is known to be extensively used for testing new text mining algorithms. 
It is comprised of four document collections (\textit{Financial Times, Federal Register,  
Foreign Broadcast Information Service, and Los Angeles Times})  
from the TREC CDs 4 $\&$ 5 (copyrighted).  
The queries are from the TREC-8 \textit{ad hoc} task; see 
\url{http://trec.nist.gov/data/topics_eng/}. The relevance files  
are available at \url{http://trec.nist.gov/data/qrels_eng/}.

After the pre-processing step discussed at the beginning of this section,  
TREC8 delivers a term-document matrix with total of $m = 138,232$ 
terms and $n = 528,028$ documents. In our experiment, we fix the initial $t = 90,000$ 
columns and then incrementally add the new columns in groups of $p = 500$ (top) 
and $p = 1,000$ (bottom) until their total number reaches $300,000$.  
In both cases, the value of $l$  
is relatively small:  
we use only 10 singular triplets and 20 GKL vectors. 
  
As can be seen in~Figure~\ref{fig:trec}, the gain in the efficiency  
presented by the new schemes becomes even more pronounced 
when the methods are applied to a larger dataset.  
In particular, our tests show a three-fold speed-up of the  
overall updating process  
(420 sequential updates) for $p = 500$ and a five-fold  
speed-up (210 sequential updates) for $p=1,000$. 
Yet, in both cases, the proposed algorithms deliver a 
comparable retrieval accuracy. 

In Figure~\ref{fig:trec} we also report the results for schemes~\cite{BeDuBri:95, OBrien:94} 
(``OB'').
\footnote{The abbreviation is after the name of the author of~\cite{OBrien:94}.}
As has been previously discussed, these schemes are known to be fast but generally lack accuracy.
This is confirmed by our experiment. Remarkably, the methods proposed in this paper are essentially
as fast as those in~\cite{BeDuBri:95, OBrien:94}, but the accuracy is higher than that
of Zha--Simon schemes~\cite{Zha.Simon:99}.
Note that in contrast to~\cite{BeDuBri:95, OBrien:94}, which can be obtained by setting $l=0$ in our 
algorithms, the presence of a nonzero number $l$ of singular or GKL vectors is indeed critical 
for maintaining the accuracy.

\section{Conclusion}\label{sec:concl} 
 
This paper introduces several new algorithms for the SVD updating problem in LSI. 
The proposed schemes are based on classical projection methods 
applied to the singular value computations. A key ingredient of the  
new algorithms is the construction of low-dimensional search subspaces. 
A proper choice of such subspaces leads to fast updating schemes with a modest 
storage requirement.

In particular, we consider two options for reducing the dimensionality 
of search subspaces. The first one is based on the use of (approximate) singular vectors.  
The second options utilizes the GKL vectors. Our tests show that generally a larger number  
of GKL vectors is needed to obtain comparable retrieval accuracy.  
However, the case of singular vectors has a slightly higher computational cost.

Note that construction of 
search subspaces is not restricted only to the two techniques considered in the present work. 
Due to the established link to a Rayleigh-Ritz procedure, other approaches 
for generating search subspaces can be investigated within this framework in future research.

Our experiments demonstrate a substantial efficiency improvement over the 
state-of-the-art updating algorithms~\cite{Zha.Simon:99}.
While in our tests we have also consistently observed a slight increase 
in the retrieval accuracy, it is not 
clear how and if the observed gains are related to the proposed algorithmic
developments. This issue should be addressed in future research.
  
Since the new approach scales linearly in $p$  
(in contrast to the cubic scaling exhibited by the existing methods~\cite{Zha.Simon:99}),  
the efficiency gap becomes especially evident as $p$ increases.  
Because significantly large values of $p$ 
are likely to be encountered in the  
context of large-scale datasets, 
this means that in the future,
 algorithms such as the ones proposed in this paper,
may play a role  in reducing the cost of standard
SVD-based techniques employed in the related applications.

\bibliographystyle{siam} 
 
\bibliography{strings,IR,saad,local}

\end{document}